\documentclass[11pt,twoside]{article}
\usepackage{latexsym,amsmath,amssymb,times,tikz,pgfplots}

\newif\ifdviwin

\renewenvironment{thebibliography}[1]{
  \begin{oldthebibliography}{#1}
    \setlength{\itemsep}{0em}
    \setlength{\parskip}{0em}
}
{
  \end{oldthebibliography}
}

\usepackage{titlesec}
	
\titleformat{\section}{\large\bfseries\center}{\thesection}{1em}{\vspace{.0cm}}
\titleformat{\subsection}[runin]{\bfseries}{\thesubsection}{1em}{}

\usepackage[english]{babel}
\usepackage{indentfirst}
\usepackage[mathscr]{eucal}
\usepackage{amssymb,amsmath,amsfonts}
\usepackage{fancybox,fancyhdr}
\usepackage{graphicx}
\usepackage[utf8]{inputenc}
\usepackage{float}
\usepackage{color}
\usepackage[pdftex]{hyperref}
\hypersetup{colorlinks=true,linkcolor=blue,citecolor=blue}

\newif\ifdviwin

\dviwintrue

 \newtheorem{defi}{Definition}[section]
 \newtheorem{teo}[defi]{Theorem}
 \newtheorem{pro}[defi]{Proposition}
 \newtheorem{cor}[defi]{Corollary}

 \newenvironment{proof}{\rm \trivlist \item[\hskip \labelsep{\it
      Proof}.]}{\nopagebreak \hfill $\Box$ \endtrivlist}

\numberwithin{equation}{section}

\def\r{\mathbb{R}}
\def\R{\mathcal{R}}
\def\gf{\mathcal{G}_f}
\def\gl{\mathcal{G}_l}
\def\gL{\mathcal{G}_L}
\def\nil{\mathrm{Nil}_3}
\def\H{\mathbb{H}}
\def\s{\mathbb{S}}

\begin{document}
\thispagestyle{empty}

\begin{center}

\renewcommand{\thefootnote}{\,}
{\Large \bf Invariant $\lambda$-translators in the Heisenberg group
\footnote{\hspace{-.75cm}
\emph{Mathematics Subject Classification:} \\
\emph{Keywords}: }}\\
\vspace{0.5cm} { Antonio Bueno}\\
\end{center}
\vspace{.5cm}

\begin{abstract}
We study oriented surfaces in the Heisenberg space $\nil$ whose mean curvature $H$ at each point is $H=\langle N,\partial_z\rangle+\lambda$, where $N$ is the unit normal, $\partial_z$ is the vertical Killing vector field and $\lambda\in\r$. These surfaces are known as $\lambda$-translators and generalize, among others, minimal and positive constant mean curvature surfaces, and also translating solitons of the mean curvature flow. The objective in this paper is to classify $\lambda$-translators invariant by the following one-parameter groups of isometries of $\nil$: left-translations, rotations and helicoidal motions.
\end{abstract}

\section{Introduction}

Let $M$ be a surface in a Riemannian 3-manifold $(\mathcal{M},g)$ and $\{\Psi_t\}$ a smooth variation of $M$. We say that $\Psi_t$ evolves by mean curvature flow if
$$
\left(\frac{\partial\Psi_t}{\partial t}(p,t)\right)^\bot=H_t(\Psi_t(p))N_t(\Psi_t(p)),
$$
where $H_t$ and $N_t$ are the mean curvature and unit normal of $\Psi_t(M)$ and $p\in M$. The surface is a translating soliton of the mean curvature flow, or simply a \emph{translator}, if $M$ evolves by mean curvature flow under a one-parameter group of ambient isometries. In such a case, if $X$ is the Killing vector field associated to the group of isometries, then $M$ is a soliton if and only if its mean curvature satisfies $H=g(N,X)$. 

For the particular but important case that $\mathcal{M}=\r^3$ and we take $\Psi_t$ as translations in a fixed direction $\vec{v}$, translators are just surfaces whose mean curvature is given by $H=\langle N,\vec{v}\rangle$. The importance of translators is that they appear in the singularity theory of the MCF as the equation of the limit flow by a blow-up procedure near type II singularities; see \cite{hui,hs}, although a treatment from the PDE viewpoint of the solvability of the Dirichlet problem was addressed in Serrin's seminal paper \cite{ser}. In the last decades, there has been an active and fruitful research regarding this topic. The most simple case is the one-dimensional problem in the plane: the only curve that evolves by the flow of its curvature is the \emph{grim reaper} curve $y=-\log\cos x$. In $\r^3$ we have as cylindrical translators any plane containing the direction $\vec{v}$, the \emph{grim reaper cylinder} \cite{mpshs} (which is the cylindrical surface erected over the grim reaper curve) and a one-parameter family of \emph{tilted} grim reapers varying between the former examples. Regarding rotational examples, we find the bowl soliton and the wing-like examples \cite{css} as the analogs to the plane and the one-parameter family of minimal catenoids, and more generally helicoidal surfaces \cite{ha}. We refer the reader to \cite{himw,sx} and references therein for an outline of the development of this theory. 

The generality in the definition of a translator has allowed different authors to generalize this notion to further homogeneous 3-dimensional manifolds of great interest: the hyperbolic 3-space \cite{bulo3,lrs}; the product spaces $\mathbb{H}^2\times\r$ \cite{bue1,bue2,bulo1,lipi} and $\s^2\times\r$ \cite{lm1}; the Heisenberg and the solvable group \cite{pip1,pip2}; and the special lineal group $SL(2,\r)$ \cite{lm2}. We emphasize that also conformal Killing vector fields have been considered, \cite{bulo2,moshs}, with the difference that the shape of the corresponding translators is not preserved along the flow. Finally, translators have been also addressed in Lorentzian spaces \cite{laor}.

Another useful characterization is due to Ilmanen \cite{ilm}, as he exhibited that translators are minimal surfaces in a conformal metric space. Furthermore, translators are also minimal surfaces when we endow $\r^3$ with weighted area and volume elements. Here we follow \cite{bcmr}: consider $e^\phi$ a smooth density, where $\phi\in C^\infty(\r^3)$, which serves as weight to measure the surface area and volume, $dA_\phi=e^\phi dA$ and $dV_\phi=e^\phi dV$. If we consider a compactly supported variation of $M$ with variation vector field $\xi$, then
$$
A'_\phi(0)=\int_MH_\phi\langle N,\xi\rangle dA_\phi,\quad V'_\phi(0)=-\int_M\langle N,\xi\rangle,
$$
where $H_\phi=H-\langle N,D\phi\rangle$ and $D$ is the gradient in $\r^3$. Thus, $M$ is a critical point of the weighted area for a compactly supported variation that preserves the weighted volume if and only if $H_\phi$ is a constant function, $H_\phi=\lambda$. If we drop the condition that the variations preserve the volume, then $M$ is a critical point of $A_\phi$ if and only if $H_\phi=0$. For the particular case $\phi(x)=\langle x,\vec{v}\rangle$ we have $D\phi=\vec{v}$ and thus the surfaces satisfying $H_\phi=0$ are the translators.

In contrast with the fruitful theory of translators developed in homogeneous 3-manifolds, a systematic study of equation $H_\phi=\lambda$, with $\lambda\neq0$, has been only considered in $\r^3$ \cite{blo,buor1,lop1,lop2}, in $\H^2\times\r$ \cite{buor2} and in $\mathbb{L}^3$ \cite{buor3}. Very recently, the author has addressed this problem in the product space $\s^2\times\r$ \cite{bue5}, where a classification of invariant surfaces of constant weighted mean curvature has been achieved. Following the aforementioned literature regarding translators in homogeneous 3-manifolds, we address the study of the following class of surfaces in the Heisenberg group.

\begin{defi}
Given $\lambda\in\r$, an oriented surface $M$ in $\nil$ is a $\lambda$-translator if its mean curvature $H$ satisfies
\begin{equation}\label{eqlambdatranslator}
H(p)=\langle N(p),\partial_z\rangle+\lambda,\qquad p\in M,
\end{equation}
where $N$ is the unit normal vector field of $M$.
\end{defi}

The Heisenberg group has a 4-dimensional isometry group, generated by three linearly independent left-translations and rotations about a vertical geodesic. The left-translations along a vertical geodesic will be referred as \emph{vertical translations}. In this paper, we provide a classification of invariant $\lambda$-translators under the following isometries of the Heisenberg group: left translations, rotations and helicoidal motions, being the latter defined as the composition of a rotation and a vertical translation. 

Our first goal is to classify invariant $\lambda$-translators under left-translations. Here, we distinguish if the left-translation is vertical or not. In analogy with the Euclidean case, we define a $\lambda$-grim reaper as a $\lambda$-translator invariant under a non-vertical left-translation. If the left-translation has a vertical component then it will be said to be \emph{tilted}. We first prove in Prop. \ref{propverticallambdatranslators} that the only $\lambda$-translators invariant under vertical translations are round cylinders. In our first result we assume that the left-translations are not vertical, and classify all $\lambda$-grim reapers.

\begin{teo}\label{t0}
For each $\lambda\in\r$, there exists a one-parameter family of $\lambda$-grim reapers, parametrized by the tilting. Its profile curve lies in a vertical plane, loops and self-intersects infinitely-many times.
\end{teo}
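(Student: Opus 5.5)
The plan is to reduce the $\lambda$-translator equation \eqref{eqlambdatranslator} for surfaces invariant under a non-vertical left-translation to an ODE for a planar profile curve, and then to study that ODE in a phase plane. I work in the standard model of $\nil$ given by $\r^3$ with the metric $dx^2+dy^2+\left(dz+\tfrac12(y\,dx-x\,dy)\right)^2$, whose left-invariant orthonormal frame $E_1,E_2,E_3=\partial_z$ satisfies $[E_1,E_2]=E_3$.

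\textbf{Step 1: normalization and parametrization.}
\begin{itemize}
\item Rotations about the $z$-axis are isometries fixing $\partial_z$, so they preserve the class of $\lambda$-translators. Using them, I may assume the left-translation is generated by a left-invariant field of the form $E_1+\mu E_3$.
\item The parameter $\mu\in\r$ is the \emph{tilting}, and $\mu=0$ is the untilted grim reaper.
\item I parametrize the invariant surface as $\psi(s,u)=L_{\exp(u(E_1+\mu E_3))}(\gamma(s))$, where $\gamma(s)=(0,y(s),z(s))$ is the profile curve in the vertical plane $x=0$.
\end{itemize}
This plane is transverse to the orbits, so every invariant surface arises this way. This gives the first assertion of the theorem: the profile curve lies in a vertical plane.

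\textbf{Step 2: the ODE system.}
\begin{itemize}
\item Along $u=0$ I compute the tangent vectors $\psi_s$ and $\psi_u$ in the frame $E_i$, then the unit normal $N$, and then the mean curvature $H$, using the Levi-Civita connection of the left-invariant frame.
\item The coefficients depend only on $y$ and on the direction of $\gamma'$, because the orbit direction $E_1+\mu E_3$ expressed along $\gamma$ picks up a $y$-dependent term coming from $\tfrac12 y\,dx$.
\item I write $\gamma'$ as a unit vector with angle $\theta$ with respect to the orbit-space metric. Equation \eqref{eqlambdatranslator} then becomes an autonomous system
$$y'=a(y,\theta),\qquad z'=b(y,\theta),\qquad \theta'=F_{\lambda,\mu}(y,\theta).$$
\item Translation invariance in $z$ (the field $\partial_z$ is Killing and the equation contains $\langle N,\partial_z\rangle$ only) means that $z$ decouples. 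So the essential dynamics is the planar system in $(y,\theta)$, and $z$ is recovered by quadrature.
\end{itemize}
Existence and uniqueness for the ODE then gives, for each $\lambda$ and each initial condition, a profile curve. Fixing initial data modulo the residual symmetries leaves the family parametrized by $\mu$.

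\textbf{Step 3: phase-plane analysis (the main obstacle).} I need to show that every orbit of the $(y,\theta)$ system makes $\theta$ increase without bound, i.e.\ the profile keeps turning.
\begin{itemize}
\item First I try to find a first integral. Since the vertical translations are symmetries, a flux or Noether-type quantity conserved along solutions should exist, of the form $G(y,\theta)=\lambda y+(\text{terms in }\cos\theta,\sin\theta)=\text{const}$.
\item With such an integral, the orbits are level curves of $G$ on the cylinder $\r\times\s^1$. I would show that $y$ stays bounded (as in the $\lambda$-grim reapers of $\r^3$) and that $\theta'$ never vanishes, or at least has no equilibria, on those levels. Then $\theta$ is monotone and $(y,\theta)$ is periodic with $\theta$ advancing by $2\pi$ per period.
\item If no clean first integral appears, the fallback is to locate the equilibria of the $(y,\theta)$ system and use the divergence or the sign of $F_{\lambda,\mu}$ on the lines $\theta\in\{0,\pi\}$ to rule out orbits escaping in $y$.
\end{itemize}

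\textbf{Step 4: looping and self-intersections.} Integrating $z'$ over one period gives a net vertical displacement $\Delta z$, and $y$ is periodic.
\begin{itemize}
\item If $\Delta z\neq0$, the curve is a periodic drift of a closed-turning arc, that is, a translated loop. It therefore self-intersects in each period, hence infinitely many times.
\item I must check that $\Delta z$ is smaller than the vertical extent of one loop, so that consecutive arcs actually cross.
\item If $\Delta z=0$, the curve is closed and trivially covers itself infinitely many times, though I expect $\Delta z\neq0$ generically.
\end{itemize}
This comparison, together with Step 3, is where I expect the real work to be.
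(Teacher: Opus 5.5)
There is a genuine gap. Step 3 sets out to prove something false, and Step 4 is built on it.

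For $\lambda>0$ the orbits of the $(y,\theta)$-system are not periodic, and your $y$ (the paper's $x$) is unbounded. The profile is a chain of loops drifting \emph{horizontally}, not a fixed loop translated by a vertical $\Delta z$. What is available is not a first integral but a monotone quantity. With $w=\sqrt{1+(c-2\tau x)^2}$, the paper's system gives
$$
\frac{d}{ds}\bigl(2\lambda x-\sin\theta\bigr)=\frac{2\lambda\cos\theta}{w}-\cos\theta\,\theta'=-\frac{2\cos^2\theta}{w^2}\le0 .
$$
The quantity $V=2\lambda x-\sin\theta$ is $2\pi$-periodic in $\theta$. Consequences:
\begin{enumerate}
\item Any full turn of $\theta$ strictly decreases $x$, so there are no periodic orbits at all.
\item There are no equilibria when $\lambda\neq0$, so a LaSalle argument on the cylinder gives $x\to\mp\infty$ as $s\to\pm\infty$.
\item Since $|x'|\le 1/w$, the divergence of $x$ forces $\int ds/w=\infty$. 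Eventually $\theta'\approx2\lambda/w>0$, hence $\theta\to\pm\infty$ and the curve turns infinitely often.
\end{enumerate}

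The paper reaches the same picture differently. It first reduces to $c=0$ via $\R_c=\R+(\frac{c}{2\tau},0)$: the isometry $L_{(a,0,0)}$ turns tilt $c$ into $c+2\tau a$, so the tilt is not an independent parameter of the dynamics. For $\lambda\ge1$ it then compares the orbit through $(x_0,0)$ with the CMC orbit $x_\sigma(\theta)=x_0+\frac{\sin\theta}{2\lambda}$, which is exactly a level set of $V$. This yields a return point $(x_1,2\pi)$ with $x_1<x_0$. It iterates and shows $x_n\to-\infty$ by a limit-orbit argument. Self-intersections are then read off from the unboundedness of $x$ together with $z$ increasing for $\theta\in(0,\pi)$ and decreasing for $\theta\in(\pi,2\pi)$.

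The heuristics behind your Step 3 also point the wrong way.
\begin{itemize}
\item Vertical invariance by itself only decouples $z$. A Noether argument would need a variational structure, and the usual weighted-area one is absent: $\partial_z$ is not a gradient in $\nil$, since its dual form $dz+\tau(y\,dx-x\,dy)$ is not closed.
\item In $\r^3$ ($\tau=0$) the first integral is $e^{2z}(\cos\theta+\lambda)$, which comes from \emph{horizontal} translations. For $\lambda>1$ it keeps $z$ bounded, while each period shifts the horizontal coordinate by $\pi\bigl(1-\lambda/\sqrt{\lambda^2-1}\bigr)\neq0$. That is the opposite of what you expect. In $\nil$ those horizontal translations do not commute with the invariance group, which is why $w$ depends on $x$.
\item Your plan has no mechanism for $\lambda<1$, where $\theta'$ vanishes on the curve $\Lambda_c=\{\cos\theta=-\lambda w\}$. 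The paper argues that orbits visit the region bounded by $\Lambda_c$ only finitely often because $x$ diverges; the monotone quantity above handles this case uniformly.
\end{itemize}

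At $\lambda=0$ the lines $\theta=\pm\pi/2$ consist of equilibria (vertical planes), so no argument can produce infinitely many loops there. The paper tacitly takes $\lambda>0$; the case $\lambda<0$ follows by reversing orientation.
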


Our next goal is to obtain a classification of rotational $\lambda$-translators. Following the same scheme developed in \cite{bgm} which has been extended to several works by the author and further collaborators, we reach such a classification result by means of the study of the qualitative properties of the non-linear autonomous system fulfilled by the profile curve of any rotational $\lambda$-translator. This strategy or taking advantage of the symmetries to reduce the underlying PDE to an ODE system has been also exploited in \cite{fmp} in classifying symmetric minimal and positive constant mean curvature surfaces in $\nil$.

First of all, we prove in Prop. \ref{propintersectionorthogonal} that if a rotational $\lambda$-translator approaches the rotation axis, then it does orthogonally. Furthermore, in virtue of the existence of radial solutions to this problem in \cite{bue4}, we introduce in Def. \ref{defiM+-} two $\lambda$-translators, $M_+$ and $M_-$, that intersect orthogonally the rotation axis and whose unit normal at such intersection is $\partial_z$ and $-\partial_z$, respectively. The classification of rotational $\lambda$-translators will be done by distinguishing whether the intersection with the rotation axis occurs or not. First, we prove the following.

\begin{teo}\label{t1}
A complete, rotational $\lambda$-translator that intersects the rotation axis is one of the surfaces $M_\pm$ given in \ref{defiM+-}. Both $M_\pm$ are proper immersions of a disk and behave as follows:
\begin{enumerate}
\item $M_+$ is embedded and its end has third coordinate diverging to $\infty$. Furthermore, it converges to the vertical cylinder $\mathcal{C}_\lambda$ of CMC $\lambda$ and radius $1/(2\lambda)$.
\item $M_-$ has infinitely-many self intersections and its end has third coordinate diverging to $-\infty$. Furthermore, its distance to the rotation axis is unbounded. Moreover, assume that $p_0=(0,0,z_0)$ is the intersection of $M_-$ with the rotation axis. Then,
\begin{enumerate}
\item If $\lambda\geq1$ then $M_-$ is a strictly convex vertical graph around $p_0$ in the sense that $M_-$ lies locally above the plane $\{z=z_0\}$ around $p_0$ when this plane is endowed with downwards orientation.
\item If $\lambda<1$ then $M_-$ is a strictly concave vertical graph around $p_0$ in the sense that $M_-$ lies locally below the plane $\{z=z_0\}$ around $p_0$ when this plane is endowed with downwards orientation.
\end{enumerate}
\end{enumerate}
\end{teo}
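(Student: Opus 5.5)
The plan is to reduce the problem to a planar autonomous ODE system and read off all the assertions from its phase portrait, following the scheme of \cite{bgm}. Parametrize the profile curve of a rotational $\lambda$-translator in the orbit space of the rotations about the $z$-axis as $\gamma(s)=(x(s),z(s))$, $x>0$, by arc length for the induced orbit metric. Let $\theta(s)$ be its angle function, so that $\langle N,\partial_z\rangle$ becomes an explicit function $y=y(x,\theta)$. I will write the mean curvature of the rotational surface in $\nil$ as the (weighted) geodesic curvature of $\gamma$ plus the term coming from the orbits. Substituting into \eqref{eqlambdatranslator} should give a system of the form
\[
x'=\Phi(x,y),\qquad y'=\Psi(x,y;\lambda),
\]
defined on the phase domain $\Theta=(0,\infty)\times(-1,1)$. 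It is autonomous because $z$ does not enter and vertical translations are isometries preserving $\partial_z$. I then locate its equilibria. The unique one should be $e_0=(1/(2\lambda),0)$ for $\lambda>0$, corresponding to the vertical cylinder $\mathcal{C}_\lambda$, and there should be none when $\lambda\le0$. I also compute the curves $\Gamma=\{x'=0\}$ and $\{y'=0\}$, which split $\Theta$ into monotonicity regions for $x$ and $y$.

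For the first claim, a complete rotational $\lambda$-translator meeting the axis does so orthogonally by Prop.~\ref{propintersectionorthogonal}. Hence its orbit emanates from $(0,1)$ or $(0,-1)$ in $\overline{\Theta}$. Existence and uniqueness of the radial solution of the singular ODE at $x=0$ (from \cite{bue4}) give exactly one orbit from each point, and these orbits are $M_+$ and $M_-$ of Def.~\ref{defiM+-}. Completeness plus uniqueness of the ODE solution then gives the identification. Properness and the disk topology follow once the global behaviour of each orbit is established.

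Next, the behaviour of $M_+$. Starting at $(0,1)$, the orbit enters the region where $x'>0$. I will show it can never exit through $\{y=\pm1\}$ or return to $x=0$. I then check by linearization that $e_0$ is an asymptotically stable node or focus, and argue with monotonicity regions that the orbit is trapped in a bounded compact region containing $e_0$. Poincar\'e--Bendixson then forces convergence to $e_0$, since a closed orbit is excluded by a Bendixson--Dulac argument using the divergence of the system. Consequently $x\to1/(2\lambda)$ and $\theta$ tends to vertical. Since $y$ keeps a sign that makes $z'>0$, $z$ is monotone and diverges to $+\infty$. Embeddedness follows because the profile is a graph over the $z$-direction after the first vertical point.

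For $M_-$, the orbit starts at $(0,-1)$, where the normal is $-\partial_z$. To get the convexity behaviour at the axis, I will Taylor-expand the radial solution $z=z_0+a r^2+o(r^2)$. At $p_0$ the mean curvature equals the curvature $2a$ of the graph, up to the orientation sign, while \eqref{eqlambdatranslator} gives $H(p_0)=-1+\lambda$. Thus the sign of $a$ is the sign of $\lambda-1$, which gives (a) for $\lambda>1$ and (b) for $\lambda<1$. The case $\lambda=1$ is degenerate and requires the next-order term, or alternatively a comparison argument with the downward-oriented horizontal plane via the maximum principle.

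For the global behaviour of $M_-$, I will show the orbit cannot approach $e_0$. It rotates around the phase domain, crossing $\{y=0\}$ infinitely often, so $\theta$ turns repeatedly and the profile loops. Over each loop, $x$ gains a definite amount and $z$ decreases by a definite amount; this comes from integrating $z'$ along each turn of the orbit, yielding $x$ unbounded, $z\to-\infty$, and infinitely many self-intersections. I expect this last step to be the main obstacle. The first thing I would try is a Lyapunov-type monotone quantity along the orbit, or a comparison of consecutive crossings of $\Gamma$, to prevent the orbit of $M_-$ from being captured by $e_0$ and to produce the uniform drift.
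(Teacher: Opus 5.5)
Your overall framework is the paper's: a planar autonomous system, identification of the axis-meeting solutions with $M_\pm$ via Prop.~\ref{propintersectionorthogonal} and uniqueness of the radial solution, and Poincar\'e--Bendixson with a Dulac function. However, the global part has a genuine gap for both $M_+$ and $M_-$.

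For $M_+$, monotonicity regions alone cannot trap the orbit. After $\gamma_+$ crosses $\theta=\pi/2$ it moves with $x$ decreasing and $\theta$ increasing. Whether it meets $\Gamma$ before reaching $\theta=\pi$ depends entirely on \emph{where} it crossed: every orbit crossing $\theta=\pi/2$ to the right of the orbit $\gamma_-$ of $M_-$ does reach $\theta=\pi$. So ``it never exits'' is precisely what must be proved, and you give no mechanism for it. The paper uses $\gamma_-$, running from $(x_-,\pi/2)$ to $(0,\pi)$, as a barrier, and proves $x_+<x_-$ for the crossing points of $\gamma_\pm$ with $\theta=\pi/2$:
\begin{itemize}
\item $x_+=x_-$ would make $\gamma_+\cup\gamma_-$ a rotational sphere, which Thm.~\ref{thmnonexistenceclosed} excludes.
\item $x_+>x_-$ would confine $\gamma_-$, as $s\to-\infty$, between $\gamma_+$ and $\Gamma$, so it would wind around $e_0$ inside a compact set. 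This contradicts Cor.~\ref{corcomportamientooribtasPB}: since $e_0$ is a sink and there are no closed orbits, no orbit stays in a compact set in backward time.
\end{itemize}
The lemma behind the non-existence of spheres, and the key idea missing from your plan, is Prop.~\ref{propextremosorbita}. It states that an orbit arc in $\theta\in[0,\pi]$ from $(x_1,0)$ to $(x_2,\pi)$ satisfies $x_2<x_1$. The proof reflects the lower half of the arc across $\theta=\pi/2$ and compares $dx/d\theta$ at a contact point, where $\cos\theta<0$ makes the inequality strict.

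For $M_-$ you leave the global behaviour open, and neither a Lyapunov quantity nor a uniform gain per loop is what is needed. The end of $M_-$ corresponds to $s\to-\infty$ in the system, so avoiding capture by $e_0$ is immediate from $e_0$ being attracting. The real content is that $x$ is unbounded. The paper obtains this from the same comparison lemma together with the $2\pi$-periodicity in $\theta$:
\begin{itemize}
\item In backward time, $\gamma_-$ passes $\theta=\pi/2$ and $\theta=0$, crosses the strip $\theta\in(-\pi,0)$ where $\Gamma$ is absent, and hits $\theta=-\pi$.
\item Translating by $2\pi$ and gluing, Prop.~\ref{propextremosorbita} and Cor.~\ref{cororbitadescapa} give strictly increasing $x$ at the successive crossings of $\theta=-(2k+1)\pi$.
\item These crossings cannot accumulate, because a limit arc would again satisfy the strict endpoint inequality.
\end{itemize}
This bookkeeping requires tracking the winding of $\theta$. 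Your variable $y\in(-1,1)$ forgets the winding unless you use two sheets according to the sign of $z'$; working directly with $(x,\theta)$, $\theta\in\mathbb{R}$, is simpler.

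Your Taylor-expansion argument at $p_0$ is a valid alternative to the paper's phase-plane reading, but fix the sign. With $N=-\partial_z$ one has $H(p_0)=-2a$, so $a=(1-\lambda)/2$. Hence for $\lambda>1$ the surface lies in $\{z<z_0\}$, the side the downward normal points to, and that is how item (a) must be read.
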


For rotational $\lambda$-translators not-intersecting the rotation axis, we prove the following.

\begin{teo}\label{t2}
Let $M$ be a rotational $\lambda$-translator that does not intersect the rotation axis. Then, $M$ is either $\mathcal{C}_\lambda$, or $M$ is a properly immersed annulus with infinitely-many self-intersections. One end has its third coordinate diverging to $\infty$, is embedded and converges to $\mathcal{C}_\lambda$; the other end has its third coordinate diverging to $-\infty$, has infinitely-many self-intersections and its distance to the rotation axis is unbounded.
\end{teo}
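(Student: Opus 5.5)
We reduce the problem to a planar autonomous ODE system and study its phase plane, following the scheme of \cite{bgm} already announced before Theorem \ref{t1}. Write the rotational surface as generated by a profile curve $\gamma(s)=(x(s),0,z(s))$, $x>0$, parametrized by arc length in the $(x,z)$-half-plane, with $x'=\cos\theta$, $z'=\sin\theta$. Using the Heisenberg metric, the mean curvature of the rotational surface is expressed in terms of $x,\theta,\theta'$, and $\langle N,\partial_z\rangle$ is a function of $x$ and $\theta$ only. Equation \eqref{eqlambdatranslator} then becomes a first order autonomous system in the variables $(x,y)$, with $y=\sin\theta$ (or equivalently in $(x,\theta)$), defined on the phase space $\Theta=(0,\infty)\times(-1,1)$. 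Since $z$ does not appear in the system, we recover $z$ by quadrature, and vertical translations act trivially on orbits. So rotational $\lambda$-translators correspond, up to vertical translation, to orbits of this system.

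\textbf{Phase plane analysis.} First we locate the equilibria. Vertical cylinders have $y=\pm1$ and constant $x$. The interior equilibrium, or the boundary point, corresponding to $\mathcal C_\lambda$ is at $x=1/(2\lambda)$; this will show that the only stationary solution is $\mathcal C_\lambda$, giving the first alternative of the statement. Next we compute the curves $\Gamma$ where $x'=0$ or $y'=0$. These split $\Theta$ into monotonicity regions, in each of which the sign of $(x',y')$ is fixed. The linearization at the equilibrium tells us its type. We expect an attracting spiral or node for orientations with $z\to+\infty$, which explains why one end converges to $\mathcal C_\lambda$ and is embedded there, since the height is monotone and the curve is a graph over a vertical segment eventually.

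\textbf{Following a non-axis orbit.} By Theorem \ref{t1} and Prop. \ref{propintersectionorthogonal}, orbits touching $x=0$ are exactly those of $M_\pm$. Any other orbit therefore stays in $x>0$ for all time and is complete. We follow it forward and backward through the monotonicity regions. In one direction the orbit must enter the basin of the equilibrium, by a Poincar\'e--Bendixson argument excluding closed orbits. Closed orbits are ruled out via the monotonicity of a Lyapunov-type quantity, or via the fact that $z$ is monotone on closed loops when $\langle N,\partial_z\rangle$ has a sign. In the other direction $x$ must be unbounded. Here we show that the orbit repeatedly crosses $y=\pm1$, i.e.\ the profile rotates, while $x\to\infty$ along a subsequence. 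Each loop of the profile produces a self-intersection, and $z\to-\infty$ because the downward displacement dominates on each loop. Properness follows from $|z|\to\infty$ at both ends, and the surface is an annulus because the orbit never reaches the axis.

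\textbf{Main obstacle.} The hardest step is controlling the backward end: proving that $x$ is unbounded, that infinitely many loops occur, and that the net vertical drift per loop is negative and bounded away from zero. My first attempt will be to compare with the asymptotic regime $x\to\infty$, where the Heisenberg terms in the curvature behave like $x$ times something and the equation approaches a $\lambda$-grim-reaper-type equation. I would then use the looping behaviour established in Theorem \ref{t0} to obtain infinitely many self-intersections and $z\to-\infty$. This is exactly what happens for $M_-$ in Theorem \ref{t1}, so I would reuse that argument verbatim once the orbit has left a compact set of $\Theta$.
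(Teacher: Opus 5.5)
Your outline has the right overall shape: the only equilibrium is $\mathcal{C}_\lambda$, it is a sink, the forward end converges to it with $z$ increasing, and the backward end winds with unbounded $x$. However, the two steps that carry the content are asserted rather than proved, and the mechanisms you offer for them do not work.

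\textbf{Forward end.} The claim that the orbit ``must enter the basin of the equilibrium, by a Poincar\'e--Bendixson argument'' is circular. Poincar\'e--Bendixson needs the forward half-orbit to lie in a compact set. Nothing in your outline prevents it from winding indefinitely in $\theta$ (hence being unbounded in the $(x,\theta)$-plane), or from escaping with $x\to\infty$. Reducing $\theta$ mod $2\pi$ does not help unless you also exclude non-contractible periodic orbits, i.e.\ profiles that close up after one turn up to a vertical translation.

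Your ways of excluding closed orbits do not address this. A closed phase-plane orbit only yields a profile that is periodic up to vertical translation, so monotonicity of $z$ contradicts nothing, and $\langle N,\partial_z\rangle$ changes sign along a turn. Also, $y=\sin\theta$ is the wrong variable: $x'$ and $\theta'$ depend on $\cos\theta$, the equilibrium lands on the boundary $y=1$, and the winding of $\theta$ becomes invisible. This is why the paper works in the $(x,\theta)$-plane.

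\textbf{The missing lemma.} What you need is Prop.~\ref{propextremosorbita} (with Cor.~\ref{cororbitadescapa}). Reflect the part of an orbit in $\theta\in[0,\pi/2]$ across $\theta=\pi/2$ and compare slopes in the explicit formula for $dx/d\theta$. This shows that the $x$-coordinate at successive crossings of $\theta=k\pi$ strictly decreases as $\theta$ increases. Together with the orbit $\gamma_-$ of $M_-$ from Theorem~\ref{t1}, used as a barrier, this is what the paper's proof rests on:
\begin{itemize}
\item an orbit through $(x_0,\pi/2)$ with $x_0<x_-$ is trapped to the left of $\gamma_-$, and Cor.~\ref{corcomportamientooribtasPB} gives convergence to $e_0$;
\item for $x_0>x_-$, the strict decrease and $2\pi$-periodicity bring the orbit to the left of a translate of $\gamma_-$ after finitely many turns.
\end{itemize}

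\textbf{Backward end.} The same lemma does the job. The crossing values of $x$ increase strictly as $\theta$ decreases, and they cannot accumulate at a finite value: a limiting orbit would have endpoints with equal $x$-coordinate, contradicting Prop.~\ref{propextremosorbita}. This is the argument used for $M_-$ in Theorem~\ref{t1}.

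Your substitute, comparing with the grim-reaper system for large $x$ and reusing Theorem~\ref{t0} ``verbatim'', does not go through. System \eqref{eqsystemrotational} is \eqref{eqsystemcylindrical} with $c=0$ and $\tau$ replaced by $\tau/2$, plus the extra term $-\sin\theta/(x\sqrt{1+\tau^2x^2})$ in $\theta'$.
\begin{itemize}
\item That term is of order $x^{-2}$, the same order as the term $2\cos\theta/(1+\tau^2x^2)$ that produces the drift of $x$. The drift per turn is only of order $1/x$, so no qualitative conclusion transfers without an explicit averaging computation.
\item The comparison with the curve $\sigma$ in the proof of Theorem~\ref{t0} uses that $\theta'-\theta'_\sigma$ has the sign of $\cos\theta$. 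The extra term has the opposite sign for $\theta\in(0,\pi/2)\cup(\pi,3\pi/2)$, so that comparison breaks.
\end{itemize}

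Your claim that the vertical displacement per loop is negative and bounded away from zero is in fact true. First-order averaging for large $x$ gives roughly $-\tau\pi/(2\lambda^2)$ per backward turn, a point the paper does not spell out either. In your proposal, however, it is only asserted.
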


Our final result concerns the classification of helicoidal $\lambda$-translators. Although the arguments for proving this result are mostly the same as the ones exhibited in the proof of Thm. \ref{t2}, the study of the associated non-linear autonomous system becomes substantially harder because of the existence of the vertical direction. Nonetheless, we overcome these difficulties and prove the following.

\begin{teo}\label{t3}
There exists a one-parameter family of helicoidal $\lambda$-translators, being exactly two whose generating curve intersect orthogonally the \emph{rotation} axis. Moreover,
\begin{enumerate}
\item The one with unit normal $\partial_z$ at the rotation axis has the same properties as the $\lambda$-translator $M_+$ of Thm. \ref{t1}.
\item The one with unit normal $-\partial_z$ at the rotation axis has the same properties as the $\lambda$-translator $M_-$ of Thm. \ref{t1}.
\item Any other $\lambda$-translator has a similar generating curve as any of the rotational $\lambda$-translators of Thm. \ref{t2}.
\end{enumerate}
\end{teo}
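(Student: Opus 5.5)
The plan is to reduce the helicoidal problem to a planar autonomous ODE system, exactly as for Thm. \ref{t2}, and then read off the three cases from a phase-plane analysis. We parametrize a helicoidal surface in $\nil$ with pitch $h$ (a rotation composed with a vertical translation) by rotating and lifting a generating curve $\gamma(s)=(x(s),0,z(s))$ in the vertical plane $\{y=0\}$, with $s$ arc length in the induced metric. In cylindrical coordinates adapted to the left-invariant frame $E_1,E_2,E_3=\partial_z$ of $\nil$, we write $\gamma'=(\cos\theta,\sin\theta)$ after a suitable normalization that absorbs the helicoidal twist. The metric coefficient involves a factor like $\sqrt{1+(h+x^2/2)^2/x^2}$, i.e.\ a function $\rho(x)$ coming from the Killing field generating the motion.

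Next we compute $H$ and $\langle N,\partial_z\rangle$ in terms of $(x,\theta)$. Equation \eqref{eqlambdatranslator} then becomes an autonomous system
\[
x'=\cos\theta,\qquad \theta'=F_{h,\lambda}(x,\theta),
\]
where $F_{h,\lambda}$ reduces, for $h=0$, to the rotational system used for Thms. \ref{t1} and \ref{t2}. The height $z$ decouples, since $z'$ is determined by $(x,\theta)$ and $h$. The phase space is $\Theta=(0,\infty)\times\r/2\pi\mathbb{Z}$. On it we determine:
\begin{enumerate}
\item the curves $\Gamma=\{\theta'=0\}$ and the lines $x'=0$;
\item the resulting monotonicity regions;
\item the unique equilibrium, which corresponds to a vertical cylinder. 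Since the vertical cylinder of CMC $\lambda$ with axis the rotation axis is invariant under every helicoidal motion, we expect this equilibrium to sit at $x=1/(2\lambda)$, independently of $h$.
\end{enumerate}

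For items 1 and 2 we proceed as follows. We first prove the analogue of Prop. \ref{propintersectionorthogonal}: orbits reaching $x=0$ do so with $\theta=\pm\pi/2$. We then invoke the existence of radial solutions from \cite{bue4}, adapted to the helicoidal ODE, since a solution through the axis is a singular initial value problem with $x\to0$. This gives exactly two orbits, with normals $\pm\partial_z$ at the axis. For these two orbits we replicate the monotonicity arguments of Thm. \ref{t1}:
\begin{enumerate}
\item the $+$ orbit stays in a region where $x$ increases and $\theta$ is monotone, so it converges to the equilibrium; hence the surface is embedded, $z\to\infty$, and it converges to $\mathcal{C}_\lambda$;
\item the $-$ orbit winds around, i.e.\ $\theta$ rotates indefinitely. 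This produces loops, and hence self-intersections, while $x$ is unbounded and $z\to-\infty$.
\end{enumerate}
For item 3, every other orbit lies in $\Theta$ away from the axis. Using uniqueness and the fact that orbits cannot cross the two axial orbits, we show that each end behaves like one of the two ends in Thm. \ref{t2}. The one-parameter family is given by the initial point on a transversal, for instance $\{\theta=\pi/2\}$.

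The main obstacle is the vertical direction. The pitch $h$ adds terms to $F_{h,\lambda}$ that spoil the simple structure of $\Gamma$ in the rotational case. Two points are delicate. First, we must verify that $\Gamma$ is still a graph over $\theta$ with only one equilibrium. Second, we must show that the equilibrium is a stable focus, or at least asymptotically stable, on the relevant side. For the first point we would try direct sign estimates on $F_{h,\lambda}$ in each quadrant of $\theta$. For the second we would linearize at $(1/(2\lambda),\pi/2)$, and we would use a Lyapunov or energy-type function, or the first integral coming from the Killing field, to exclude closed orbits. The unboundedness of $x$ for the $-\partial_z$ end also needs care. There we would argue by contradiction: a bounded winding orbit would have to accumulate on a periodic orbit, and this is ruled out by the Bendixson–Dulac criterion applied to $F_{h,\lambda}$.
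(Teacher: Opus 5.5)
Your overall architecture is the one the paper uses: a planar autonomous system, the curve $\Gamma$ and its monotonicity regions, the two axial orbits as barriers, and Poincar\'e--Bendixson. The paper proves Thm.~\ref{t3} by transplanting the rotational arguments. However, the step you flag as the first delicate point cannot be carried out, because for $h>0$ the curve $\Gamma$ is in general \emph{not} a graph over $\theta$. For instance, if $\lambda<1$ then $F(x,\pi)=2x\bigl(\lambda-x/\sqrt{x^2+(h-\tau x^2)^2}\bigr)$ vanishes at $x=0$ and at two positive points $x_\lambda^\pm$. Near $(0,\pi)$ the curve $\Gamma$ leaves with slope $\theta'(0)=-2\lambda$ into $\{\theta<\pi\}$, so the lines $\theta=\pi-\epsilon$ meet $\Gamma$ three times. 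The paper instead allows a second compact component $\Gamma^2$, detected through the intersections of $\gf$ and $\gL$. It argues that an orbit entering the region bounded by $\Gamma^2$ only temporarily reverses the monotonicity of $\theta$ and must leave, since that region lies in $\{\pi/2<\theta<3\pi/2\}$ and contains no equilibrium. (A smaller slip: in your normalization $x'=\cos\theta$ the axial endpoints are $(0,0)$ and $(0,\pi)$ mod $2\pi$, not $\theta=\pm\pi/2$, which is the direction of the cylinder $e_0$.)

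More seriously, the mechanism that actually yields items 1--3 is missing. The orbit $\gamma_+$ does not converge to $e_0$ because it stays in a monotone region: if $e_0$ is a focus it must cross $\theta=\pi/2$ and spiral. The paper's argument orders the crossing points. Either $\gamma_+\to e_0$, or it meets $\theta=\pi/2$ at some $x_+$. The case $x_+=x_-$ would produce an orbit joining $(0,0)$ and $(0,\pi)$. The case $x_+>x_-$ would keep $\gamma_-$ at bounded distance from $e_0$ as $s\to-\infty$, contradicting Cor.~\ref{corcomportamientooribtasPB}. Hence $x_+<x_-$, $\gamma_-$ confines $\gamma_+$, and Poincar\'e--Bendixson gives convergence.

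Three further steps all rest on the half-turn comparison lemma, Prop.~\ref{propextremosorbita} and Cor.~\ref{cororbitadescapa}:
\begin{itemize}
\item excluding an orbit from $(0,0)$ to $(0,\pi)$;
\item proving $x\to\infty$ along $\gamma_-$;
\item showing that every orbit with $x_0>x_-$ eventually drops to the left of a $2\pi$-translate of $\gamma_-$, and hence converges to some $e_0+(0,2k\pi)$.
\end{itemize}
The lemma reflects the piece of an orbit in $\theta\in[0,\pi/2]$ across $\theta=\pi/2$ and compares $dx/d\theta$; the $\cos\theta$ term in $\theta'$ has opposite signs on the two halves. This shows that the $x$-coordinate at consecutive crossings of $\theta\in\pi\mathbb{Z}$ strictly decreases as $\theta$ increases, and an accumulation argument finishes.

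Your substitute, excluding a limit cycle of a bounded winding orbit by Bendixson--Dulac, does not work. Such a cycle winds around the cylinder $(0,\infty)\times\mathbb{R}/2\pi\mathbb{Z}$ and bounds no disk, and Dulac's criterion on an annulus only gives \emph{at most one} cycle. Moreover, the natural weight (in the rotational case $x\sqrt{1+\tau^2x^2}$) has divergence $-2x\sin\theta/\sqrt{1+\tau^2x^2}$, which changes sign along a turn. What you need is the helicoidal version of the comparison lemma, with the ordering and barrier argument built on top of it.
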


Finally, we detail the organization of the paper. In Section \ref{s2} we introduce the basic notation as well as the main properties of the Heisenberg space, including the computation of the mean curvature of an immersed surface. In Section \ref{s3} we deal with $\lambda$-grim reapers. First, we compute the mean curvature in terms of the coordinates of the base curve and express the $\lambda$-translator equation Eq. \eqref{eqlambdatranslator} in terms of such coordinate functions. In Section \ref{s31} we study the main properties of the phase plane of the non-linear autonomous system fulfilled by such coordinates, which will allow us to prove Thm. \ref{t0} in Section \ref{s32}. In Section \ref{s4} we study rotational $\lambda$-translators. In the same spirit as in Section \ref{s3}, we study structure of the phase plane of such rotational $\lambda$-translators in Section \ref{s41} and the main properties of its solutions in Section \ref{s42}. In Section \ref{s43} we prove the non-existence of rotational closed $\lambda$-translators, which will be useful for proving Thms. \ref{t1} and \ref{t2} in Sections \ref{s44} and \ref{s45}, respectively. Finally, in Section \ref{s5} we study helicoidal $\lambda$-translators. The main properties of the phase plane are depicted in Section \ref{s51}, while in Section \ref{s52} we prove Thm. \ref{t3}.

\section{Preliminaries}\label{s2}

The Heisenberg space $\nil$ is defined as $\r^3$ with usual coordinates $(x,y,z)$, endowed with the metric
\begin{equation}\label{eqmetricanil}
\langle\cdot,\cdot\rangle=dx^2+dy^2+(dz+\tau(ydx-xdy))^2.
\end{equation}
This space is a Lie group when we define the inner product
$$
(x_1,y_1,z_1)*(x_2,y_2,z_2)=(x_1+y_1,x_2+y_2,z_1+z_2+\tau(x_1y_2-x_2y_1)).
$$
For each $p_0=(x_0,y_0,z_0)\in\nil$ we define the \emph{left-translation} based on $p_0$ as
$$
L_{p_0}(x,y,z)=(x_0,y_0,z_0)*(x,y,z).
$$
The left-translations are isometries and in particular $\nil$ is a homogeneous space. If $e_1=\partial_x,\ e_2=\partial_y,\ e_3=\partial_z$ is the canonical basis of $\r^3$, then \begin{equation}\label{eqbasicvectorfields}
E_1=\partial_x-\tau y\partial_z,\quad E_2=\partial_y+\tau x\partial_z,\quad E_3=\partial_z
\end{equation}
are the only orthonormal (for the metric \eqref{eqmetricanil}), left-invariant vector fields satisfying $(E_k)_\textbf{0}=e_k$, where $\textbf{0}=(0,0,0)$. The left-translations and the vector fields $E_k$ are related as follows. The flow of isometries of the Killing vector field $E_3$ is the one-parameter group $t\mapsto L_{(0,0,t)}$, which are commonly known as the \emph{vertical translations} in the $\partial_z$-direction and will be denoted by $V_t$. The flow of isometries of $E_1$ is the one-parameter group $t\mapsto L_{(t,0,0)}$ while the one of $E_2$ is $t\mapsto L_{(0,t,0)}$. At this point, is worth-mentioning the remaining isometries of $\nil$. Besides the left-translations, we find the rotations about a vertical geodesic. If such a geodesic is the one passing through $\textbf{0}$, then the rotations are given by usual Euclidean rotations
$$
\rho_t:\nil\to\nil,\qquad \rho_t\left(\begin{matrix}
x\\y\\z
\end{matrix}\right)=\left(\begin{matrix}
\cos t&-\sin t &0\\
\sin t &\cos t&0\\
0&0&1
\end{matrix}\right)\left(\begin{matrix}
x\\y\\z
\end{matrix}\right).
$$
For instance, the flow of $E_1$ and of $E_2$ agree after a rotation, hence the left translations $L_{(t,0,0)}$ and $L_{(0,t,0)}$ differ by a rotation.

Since $\nil$ is not a space form and there are not homogeneous 3-manifolds with isometry group of dimension five, the aforementioned isometries describe the isometry group of $\nil$, being four-dimensional. Note that there are no reflection-type isometries and as a paramount consequence there is no an analog to the famous Alexandrov reflection technique. Finally, given a line $t\mapsto (tx_0,ty_0,z_0)$, the rotation of angle $\pi$ about this line is also an isometry.

\subsection{The mean curvature equation}

We compute an expression for the mean curvature of an immersed surface. Given $\psi=\psi(s,t):M\to\nil$ a parametric surface, we express the basic vector fields $\psi_s,\psi_t$ in the basis $\{E_k\}$,
$$
\psi_s=\sum_{k=1}^3 a_k(s,t)E_k(\psi(s,t)),\qquad \psi_t=\sum_{k=1}^3 b_k(s,t)E_k(\psi(s,t)).
$$
We drop the dependence on the variables $(s,t)$ in order to clarify the notation. The coefficients of the first fundamental form are $E=\langle\psi_s,\psi_s\rangle,\ F=\langle\psi_s,\psi_t\rangle$ and $G=\langle\psi_t,\psi_t\rangle$. A vector field orthogonal to $T_\psi M$ is
$$
\eta=(a_2b_3-a_3b_2)E_1(\psi)+(a_3b_1-a_1b_3)E_2(\psi)+(a_1b_2-a_2b_1)E_3(\psi),
$$
hence a unit normal vector field is $N=\eta/\sqrt{\langle\eta,\eta\rangle}$. The covariant derivatives are
\begin{align*}
\nabla_{\psi_s}\psi_s&=\sum_{k=1}^3\frac{\partial a_k}{\partial s}E_k(\psi)+a_k\sum_{j=1}^3a_j\nabla_{E_j(\psi)}E_k(\psi),\\
\nabla_{\psi_s}\psi_t&=\sum_{k=1}^3\frac{\partial b_k}{\partial s}E_k(\psi)+b_k\sum_{j=1}^3a_j\nabla_{E_j(\psi)}E_k(\psi),\\
\nabla_{\psi_t}\psi_t&=\sum_{k=1}^3\frac{\partial b_k}{\partial t}E_k(\psi)+b_k\sum_{j=1}^3b_j\nabla_{E_j(\psi)}E_k(\psi),
\end{align*}
thus the coefficients of the second fundamental form are
$$
e=\langle\nabla_{\psi_s}\psi_s,N\rangle,\quad f=\langle\nabla_{\psi_s}\psi_t,N\rangle,\quad  g=\langle\nabla_{\psi_t}\psi_t,N\rangle.
$$
For the computation of the covariant derivatives, we use that the Levi-Civita connection on the global basis \eqref{eqbasicvectorfields} is
\begin{equation}\label{eqLeviCivita}
\begin{aligned}
&\nabla_{E_1}E_1=0,& &\nabla_{E_2}E_1=-\tau E_3, &&\nabla_{E_3}E_1=-\tau E_2,\\
&\nabla_{E_1}E_2=\tau E_3,& &\nabla_{E_2}E_2=0,&&\nabla_{E_3}E_2=\tau E_1,\\
&\nabla_{E_1}E_3=-\tau E_2,& &\nabla_{E_2}E_3=\tau E_1,&&\nabla_{E_3}E_3=0.
\end{aligned}
\end{equation}
Finally, the mean curvature is 
$$
2H=\frac{Eg-2Ff+eG}{EG-F^2}.
$$

\section{$\lambda$-grim reapers}\label{s3}

We begin by investigating $\lambda$-translators that are invariant under left-translations, which are the analogs of cylindrical surfaces in $\r^3$. Recall that a surface in $\r^3$ is cylindrical if it is invariant by a one-parameter group of translations. If such translations have direction $\vec{v}\in\r^3$, then a cylindrical surface can be parametrized as $(s,t)\mapsto \alpha(s)+t\vec{v}$, where $\alpha(s)$ is a planar curve in some plane $\Pi$. In general, $\vec{v}$ and $\Pi$ are commonly chosen to be orthogonal, but there is no a priori relation between both of them. If $\vec{v}$ and $\Pi$ are not orthogonal, the cylindrical surface is said to be \emph{tilted}. 

Cylindrical translators in $\r^3$ are fully classified: up to a change of coordinates they are the grim-reaper cylinder, whose base curve in the $xz$-plane is the graph $z=-\log\cos x$, its tiltings and any vertical plane. For $\lambda\neq0$ a classification of cylindrical $\lambda$-translators in $\r^n$ was achieved independently in \cite{buor1,lop1}, where explicit parametrizations were obtained, and in $\mathbb{H}^2\times\r$ \cite{bulo1,lipi}. In the latter, such cylindrical surfaces were also referred to as grim reapers. A way to generate cylindrical surfaces in $\nil$ is by means of the submersion 
$$
\pi:\nil\to\r^2,\quad \pi(x,y,z)=(x,y),
$$
whose fibers, i.e. $\pi^{-1}(q),\ q\in\r^2$ are geodesics everywhere tangent to the vector field $\partial_z$. Given a curve $\alpha\subset\r^2$, the \emph{vertical cylinder over $\alpha$} is defined as $C_\alpha=\pi^{-1}(\alpha)$, which is everywhere tangent to $\partial_z$ and whose mean curvature is $H=\kappa_\alpha/2$, where $\kappa_\alpha$ is the curvature of $\alpha$. Conversely, any surface invariant under vertical translations must be a vertical cylinder $\pi^{-1}(\alpha)$, for some curve $\alpha$. In particular, the unit normal of any vertical cylinder is everywhere orthogonal to $\partial_z$. As a straightforward consequence, we deduce the following.

\begin{pro}\label{propverticallambdatranslators}
A $\lambda$-translator invariant by vertical translations is a vertical cylinder over a circle of constant curvature $\kappa_\alpha=2\lambda$.
\end{pro}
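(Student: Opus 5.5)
The argument is a direct reduction to a planar curvature condition, using the facts about vertical cylinders recalled just before the statement.

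First, let $M$ be a $\lambda$-translator invariant under the vertical translations $V_t$. As noted above, invariance under the flow of $E_3=\partial_z$ forces $M=\pi^{-1}(\alpha)=C_\alpha$ for some regular curve $\alpha\subset\r^2$. I would justify this briefly. The orbits of $V_t$ are the vertical fibers, so $M$ is a union of fibers. Hence $\partial_z$ is tangent to $M$, and $M$ is the preimage under $\pi$ of its projection $\alpha=\pi(M)$, which is a curve.

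Second, since $\partial_z$ is tangent to $C_\alpha$, the unit normal satisfies $\langle N,\partial_z\rangle=0$ everywhere. Equation \eqref{eqlambdatranslator} then reduces to $H=\lambda$, so $C_\alpha$ has constant mean curvature $\lambda$. The only computation to verify is the identity $H=\kappa_\alpha/2$. I would check it with the formulas of Section \ref{s2}. Parametrize $\psi(s,t)=(\alpha_1(s),\alpha_2(s),t)$ with $\alpha$ arc-length, so $\psi_t=E_3$ and $\psi_s=\alpha_1'E_1+\alpha_2'E_2+\tau(\cdots)E_3$. Using \eqref{eqLeviCivita}, the contributions of the $\tau$-terms to $e$, $f$ and $g$ cancel in $Eg-2Ff+eG$. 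This leaves $2H=\kappa_\alpha$, with the orientation of $\alpha$ matching that of $N$.

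Third, $\kappa_\alpha=2\lambda$ is constant. A planar curve of constant curvature is a circle of radius $1/(2|\lambda|)$ when $\lambda\neq0$, and a straight line when $\lambda=0$ (the degenerate case of curvature zero, giving vertical planes). Choosing the orientation so that the curvature equals $2\lambda$ completes the proof.

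The only mildly delicate step is the mean curvature identity $H=\kappa_\alpha/2$ for vertical cylinders in $\nil$. This is standard for Riemannian submersions with geodesic fibers, and it was asserted in the text before the statement, so I would simply invoke it.
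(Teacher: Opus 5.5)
Your proposal is correct and follows the paper's own (essentially one-line) argument. Invariance under $V_t$ makes $M=\pi^{-1}(\alpha)$, the unit normal is horizontal so \eqref{eqlambdatranslator} reduces to $H=\lambda$, and $H=\kappa_\alpha/2$ then gives $\kappa_\alpha=2\lambda$. Your explicit check of $H=\kappa_\alpha/2$ goes through: with $\psi_t=E_3$ one gets $E=1+c^2$, $F=c$, $G=1$, $g=0$, $f=\mp\tau$, $e=\pm(\kappa_\alpha-2\tau c)$, so the $\tau$-terms cancel. Your remark that $\lambda=0$ gives vertical planes rather than circles is a fair refinement of the statement.
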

Note that $\r^2$ can be identified with the minimal plane $z=0$ (although not isometrically) and define analogously $C_\alpha=V_t(\alpha),\ t\in\r$. Under this analogy, we give the following definition.
\begin{defi}
A $\lambda$-grim reaper is a surface in $\nil$ invariant by a one-parameter group of non-vertical left-translations.
\end{defi}
The situation here changes as there are no totally geodesic surfaces in $\nil$ that are analogous to planes in $\r^3$. Nonetheless, we consider a $\lambda$-grim reaper as the image under a left-translation of a planar curve $\alpha(s)$ contained in some vertical plane. After a rotation, we assume that $\alpha(s)=(x(s),0,z(s))$ and translate $\alpha(s)$ under the one-parameter group of left-translations $t\mapsto L_{(0,t,ct)}$, where the parameter $c\geq0$ denotes the tilting. Only if $c>0$ we explicitly refer as a tilted $\lambda$-grim reaper. Thus, a parametrization of a $\lambda$-grim reaper is
\begin{equation}\label{eqparamcilindrica}
\psi(s,t)=L_{(0,t,ct)}(\alpha(s))=(x(s),t,-\tau t x(s)+z(s)+ct).
\end{equation}
We drop the dependence on the variable $s$ unless explicitly needed. The global orthonormal basis \eqref{eqbasicvectorfields} on $\psi$ is
$$
E_1(\psi)=(1,0,-\tau t),\quad E_2(\psi)=(0,1,\tau x),\quad E_3=(0,0,1),
$$
and the vector fields $\psi_s,\psi_t$ have coordinates in $E_k$
$$
\psi_s=x'E_1(\psi)+z'E_3(\psi),\quad \psi_t=E_2(\psi)+(c-2\tau x)E_3(\psi).
$$
Now we compute the covariant derivatives using \eqref{eqLeviCivita}. For the sake of clarity, we do explicitly the first one. Recall that $\psi_s=x'E_1(\psi)+z'E_3(\psi)$, hence $a_1=x',a_2=0$ and $a_3=z'$. We also drop the dependence of $E_k$ on $\psi$.
\begin{align*}
\nabla_{\psi_s}\psi_s=&\frac{\partial a_1}{\partial s}E_1+a_1\sum_{j=1}^3a_j\nabla_{E_j}E_1+\frac{\partial a_3}{\partial s}E_3+a_3\sum_{j=1}^3a_j\nabla_{E_j}E_3\\
=&x''E_1-2\tau x'z'E_2+z''E_3=(x'',-2\tau x'z',-\tau(2\tau xx'x'+tx'')+z'').
\end{align*}
An analog computation shows
$$
\nabla_{\psi_s}\psi_t=\tau z' E_1+\tau(2\tau x-c)x'E_2-\tau x'E_3,\qquad \nabla_{\psi_t}\psi_t=2\tau(c-2\tau x)E_1.
$$
The unit normal is
$$
N=\frac{1}{\sqrt{(1+(c-2\tau x)^2)x'^2+z'^2}}\left(-z'E_1-(c-2\tau x)x'E_2+x'E_3\right),
$$
whose denominator, $D$, is the square root of the determinant of the first fundamental form of $\psi$. The coefficients of the second fundamental form are
$$
e=\frac{z'(2\tau(c-2\tau x)x'^2-x'')+x'z''}{D},\quad f=\frac{\tau(-1+(c-2\tau x)^2)x'^2-\tau z'^2}{D},\quad g=\frac{-2\tau(c-2\tau x)z'}{D}.
$$
Thus, the mean curvature of a $\lambda$-grim reaper in terms of the coordinates of the base curve is
$$
2H=\frac{z'(-x''-(c-2\tau x)(-2\tau x'^2+(c-2\tau x)x''))+(1+c^2+4\tau x(-c+\tau x))x'z''}{((1+(c-2\tau x)^2)x'^2+z'^2)^{3/2}}.
$$
From the expression of the determinant of the first fundamental form, we consider the arc-length parameter
$$
x'=\frac{\cos\theta}{\sqrt{1+(c-2\tau x)^2}},\qquad z'=\sin\theta,
$$
where $\theta$ is a smooth function. With this arc-length parameter, the mean curvature and angle function of $\psi$ are
$$
2H=\sqrt{1+(c-2\tau x)^2}\theta',\qquad\langle N,E_3\rangle=\frac{\cos\theta}{\sqrt{1+(c-2\tau x)^2}},
$$
and from the $\lambda$-translator equation we conclude
$$
\theta'=\frac{2}{1+(c-2\tau x)^2}\left(\cos\theta+\lambda\sqrt{1+(c-2\tau x)^2}\right).
$$
We have proved that the coordinates of the base curve of a $\lambda$-grim reaper satisfy the ODE system
$$
\left\lbrace
\begin{array}{l}
\vspace{.2cm}x'=\dfrac{\cos\theta}{\sqrt{1+(c-2\tau x)^2}},\\
\vspace{.2cm}z'=\sin\theta,\\
\theta'=\dfrac{2}{1+(c-2\tau x)^2}\left(\cos\theta+\lambda\sqrt{1+(c-2\tau x)^2}\right).
\end{array}
\right.
$$
Conversely, for given initial conditions $(x_0,z_0,\theta_0)\in\r^3$, any solution to this initial value problem gives rise to a planar curve in the $xz$-plane which is the generating curve of a $\lambda$-grim reaper. Finally, note that the second function of this system is dependent on the first and the third, due to the invariance of both the mean curvature and angle function of a surface moving under vertical translations. Hence we study the solutions of this system by means of the first and the third equations.

\subsection{The phase plane of $\lambda$-grim reapers}\label{s31}

For each $c\geq0$, let us consider the non-linear, autonomous system
\begin{equation}\label{eqsystemcylindrical}
\left\lbrace
\begin{array}{l}
x'=\dfrac{\cos\theta}{\sqrt{1+(c-2\tau x)^2}},\\
\theta'=\dfrac{2}{1+(c-2\tau x)^2}\left(\cos\theta+\lambda\sqrt{1+(c-2\tau x)^2}\right).
\end{array}
\right.
\end{equation}
We define the phase plane of Eq. \eqref{eqsystemcylindrical} as the set $\R_c$ of the solutions $\gamma(s)=(x(s),\theta(s))$ of \eqref{eqsystemcylindrical}, also called orbits. Any orbit defines a curve $\alpha(s)=(x(s),0,z(s))$ which is the base curve for a $\lambda$-grim reaper. If $c=0$ we will simply denote $\R$ instead of $\R_0$.

A first geometric feature is the $2\pi$-periodicity in the $\theta$-direction that the phase plane has, hence we deduce the properties of $\R_c$ by analyzing any strip $\r\times[\theta_0,\theta_0+2\pi],\ \theta_0\in\r$. Note that in particular any two lines $\r\times\{\theta_0\}$ and $\r\times\{\theta_0+2\pi\}$ are identified. It is straightforward to see that if $\gamma(s)=(x(s),\theta(s))$ is an orbit, then $\sigma(s)=(\frac{c}{\tau}-x(-s),-\theta(-s))$ is also an orbit. Geometrically, $\R_c$ is anti-symmetric with respect to the lines $x=\frac{c}{2\tau}$ and $\theta=0$, and by $2\pi$-periodicity is also anti-symmetric with respect to $x=\frac{c}{2\tau}$ and $\theta=2k\pi,\ k\in\mathbb{Z}$.

Our first step is to study how an orbit moves throughout $\R_c$ and without losing any generality nor information, we restrict to the strip $\theta\in[0,2\pi]$. The $x$-coordinate increases for $\theta\in(0,\pi/2)\cup(3\pi/2,2\pi)$ and decreases for $\theta\in(\pi/2,3\pi/2)$. For the monotonicity of $\theta$, recall that if $\theta\in(0,\pi/2)\cup(3\pi/2,2\pi)$ then $\theta'>0$ and if $\theta\in(\pi/2,3\pi/2)$ then $\theta'<0$. Hence, we must study whether $\theta'=0$, or equivalently
$$
\cos\theta=-\lambda\sqrt{1+(c-2\tau x)^2}.
$$
First, note that for $\lambda\geq1$ this equation has no solutions, thus assume $\lambda<1$. For such values of $\lambda$, the above equation has solutions if and only if
$$
x\in\left[\frac{c}{2\tau}-\frac{\sqrt{1-\lambda^2}}{2\tau\lambda},\frac{c}{2\tau}+\frac{\sqrt{1-\lambda^2}}{2\tau\lambda}\right],
$$
with equality at the extremes if and only if $\theta=\pm\pi$. By defining 
$$
\Lambda_c(x)=\arccos(-\lambda\sqrt{1+(c-2\tau x)^2}),\quad x_\lambda=\frac{\sqrt{1-\lambda^2}}{2\tau\lambda},
$$ 
and the curve
\begin{equation}\label{eqcurvaLambda}
\Lambda_c=\{(x,\Lambda_c(x))\colon x\in[\frac{c}{2\tau}-x_\lambda,\frac{c}{2 \tau}+x_\lambda]\}\subset\R_c,
\end{equation}
the lines $\theta=\pi/2,3\pi/2$ and $\Lambda_c$ divide the strip $\theta\in(0,2\pi)$ into connected components where the coordinates of an orbit are strictly monotonous. In the region $\theta\in(\pi/2,\pi)$, the curve $\Lambda_c$ is a strictly convex graph, symmetric about the line $x=\frac{c}{2\tau}$ and that joins the points $(\frac{c}{2\tau}-x_\lambda,\pi)$ and $(\frac{c}{2\tau}+x_\lambda,\pi)$, having the point $(\frac{c}{2\tau},\arccos(-\lambda))$ as lowest point. By anti-symmetry and $2\pi$-periodicity, the curve $\Lambda_c$ is a strictly concave graph in the region $\theta\in(\pi,3\pi/2)$ with analog properties. We deduce that $\Lambda_c$ is indeed a symmetric bi-graph over the lines $\theta=(2k+1)\pi,\ k\in\mathbb{Z}$.

At first sight, we may think that we should study the orbits of $\R_c$ and their properties for each $c\geq0$. Nonetheless, we prove that given $c_1\neq c_2$, the phase planes $\R_{c_1}$ and $\R_{c_2}$ agree up to a translation in the $x$-direction. In the following result, we assume without losing generality $c_1=0$.

\begin{pro}
$\gamma(s)$ is an orbit in $\R$ if and only if $\sigma(s)=\gamma(s)+(\frac{c}{2\tau},0)$ is an orbit in $\R_c$.
\end{pro}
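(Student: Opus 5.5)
This is a direct substitution argument: the system \eqref{eqsystemcylindrical} depends on $x$ only through the combination $c-2\tau x$, so shifting $x$ by $\frac{c}{2\tau}$ should absorb the tilting parameter. Write $\gamma(s)=(x(s),\theta(s))$ and $\sigma(s)=(\tilde x(s),\tilde\theta(s))$ with
$$
\tilde x(s)=x(s)+\frac{c}{2\tau},\qquad \tilde\theta(s)=\theta(s).
$$
The key identity is
$$
c-2\tau\tilde x=c-2\tau x-c=-2\tau x.
$$
Since $x$ appears in both right-hand sides only through $(c-2\tau x)^2$, this gives $(c-2\tau\tilde x)^2=(2\tau x)^2=(0-2\tau x)^2$.

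Suppose $\gamma$ is an orbit of $\R=\R_0$, that is, it solves \eqref{eqsystemcylindrical} with $c=0$. Then $\tilde x'=x'$ and $\tilde\theta'=\theta'$. Replacing $(0-2\tau x)^2$ by $(c-2\tau\tilde x)^2$ in the right-hand sides of the $c=0$ system turns them exactly into the right-hand sides of \eqref{eqsystemcylindrical} for the parameter $c$, evaluated at $(\tilde x,\tilde\theta)$. Hence $\sigma$ is an orbit of $\R_c$.

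The converse is the same computation run backwards. Start from an orbit $\sigma$ of $\R_c$ and set $\gamma=\sigma-(\frac{c}{2\tau},0)$; the identity above then shows that $\gamma$ satisfies the system with $c=0$.

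No step is a real obstacle. The only things to check are that the translation preserves the parametrization $s$, so derivatives are unchanged, and that the domain of definition of an orbit is carried over unchanged, which holds because the shift is a diffeomorphism of the phase plane. As a consistency check, the distinguished objects shift in the same way: the curve $\Lambda_0$ centred at $x=0$ goes to $\Lambda_c$ centred at $x=\frac{c}{2\tau}$, and the anti-symmetry line $x=0$ of $\R$ goes to $x=\frac{c}{2\tau}$ in $\R_c$. Both agree with the description of $\R_c$ given above.
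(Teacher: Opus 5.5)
Your proof is correct and follows the paper's approach: substitute the shifted curve into the system and check that it still solves it. The only difference is that you write out the identity $c-2\tau\tilde x=-2\tau x$, which the paper leaves as ``immediate to check''.
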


\begin{proof}
Assume that $\gamma(s)=(x(s),\theta(s))\in\R$ is an orbit and define $\sigma(s)=(x_c(s),\theta_c(s))=\gamma(s)+(\frac{c}{2\tau},0)$. Then, $x_c(s)=x(s)+\frac{c}{2\tau}$ and $\theta_c(s)=\theta(s)$ and it is immediate to check $\sigma$ is an orbit in $\R_c$. Conversely, if $\sigma$ is an orbit in $\R_c$, then it is straightforward that $\gamma=\sigma-(\frac{c}{2\tau},0)$ is an orbit in $\R$.
\end{proof}

In Fig. \ref{fig:phaseplanetilted}, top, we see the phase plane $\R_2$ for $c=2,\ \tau=1$ and $\lambda=0'5$, hence we translate all the elements of $\R$ by a unit in the $x$-direction. In red we have the curve $\Lambda$ for $c=0$, in green the curve $\Lambda_2$ and dashed in red the translation $\Lambda+(1,0)$ overlapping the curve $\Lambda_2$. In orange we see an orbit $\gamma$ for $c=0$ and in dashed the translation $\gamma+(1,0)$, overlapping the orbit $\sigma$ of $\R_2$ plotted in blue. At the bottom we see three $\lambda$-grim reapers with parameter $c=2$. Depending on the initial condition $x_0$ in Eq. \eqref{eqsystemcylindrical}, the tilting of the corresponding $\lambda$-grim reaper changes.

\begin{figure}[H]
\centering
\begin{tikzpicture}[scale=1]
\node[anchor=south,inner sep=0] at (0,0){\includegraphics[width=.4\textwidth]{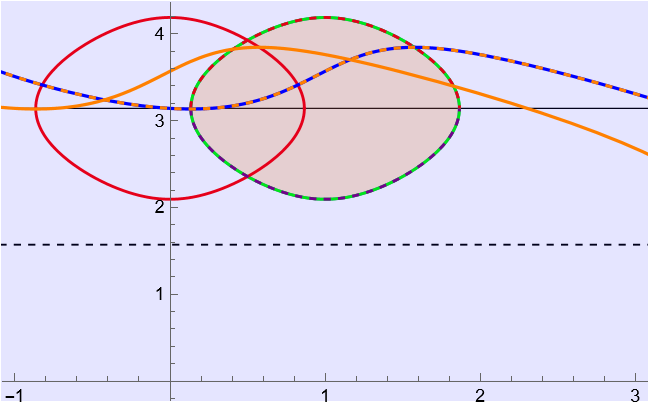}};
\node[anchor=north east,inner sep=0] at (-1.5,0){\includegraphics[width=.45\textwidth]{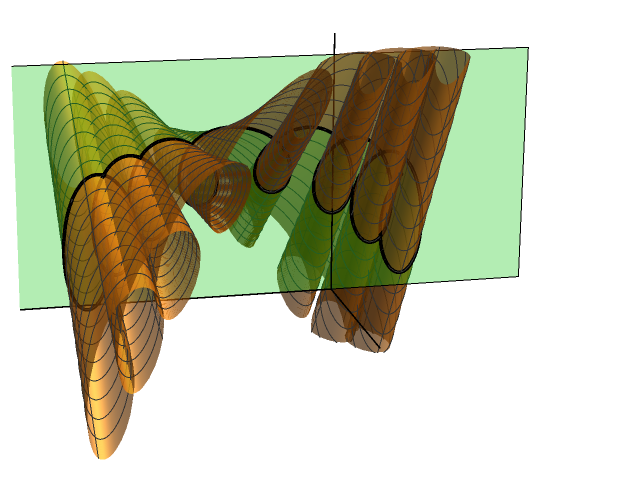}};
\node[anchor=north,inner sep=0] at (.4,0){\includegraphics[width=.45\textwidth]{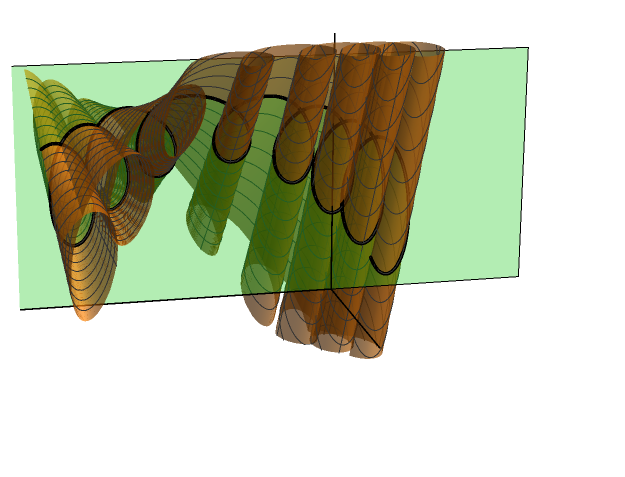}};
\node[anchor=north west,inner sep=0] at (2,0){\includegraphics[width=.45\textwidth]{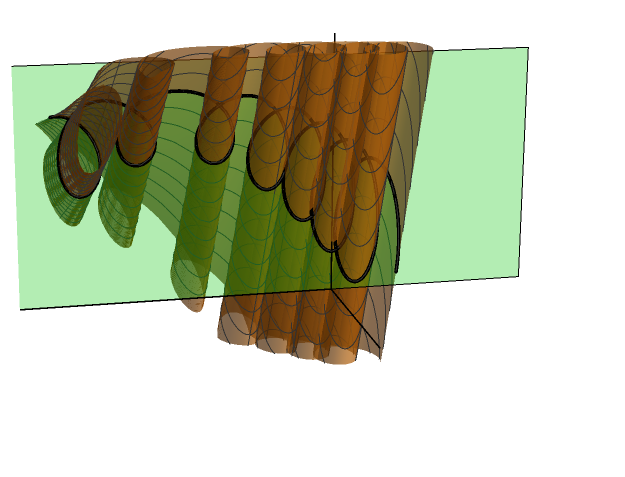}};
\end{tikzpicture}
\caption{Top: the phase plane of Eq. \eqref{eqsystemcylindrical} for $\tau=1$, $\lambda=0'5$ and $c=2$. In dashed there are the translations of the elements of the phase plane $\R$, overlapped with the ones of $\R_2$. Bottom: three $\lambda$-grim reapers for $c=2$ and different initial conditions $x_0$.}
\label{fig:phaseplanetilted}
\end{figure}

In virtue of this result, it is enough to study the properties of the orbits in a phase plane for a fixed parameter $c\geq0$ and deduce the properties of all the $\lambda$-grim reapers, independently of their tiltings.

\subsection{Proof of Thm. \ref{t0}}\label{s32}

As justified at the end of the previous section, we fix $c=0$ and study the properties of every $\lambda$-grim reaper by studying the orbits of $\R$. We now prove Thm. \ref{t0} by distinguishing between the cases $\lambda\geq1$ and $\lambda<1$.

First, assume $\lambda\geq 1$. In this case, the curve $\Lambda$ does not appear and we find essentially two types of monotonicity regions:
$$
\R_1=\left(0,\frac{\pi}{2}\right)\cup\left(\frac{3\pi}{2},2\pi\right),\quad \R_2=\left(\frac{\pi}{2},\frac{3\pi}{2}\right),
$$
and their $2\pi$-translations in the $\theta$-direction. We begin by proving that an orbit cannot lie contained in any of such monotonicity regions as $|s|\to\infty$. Arguing by contradiction, let $\gamma$ be an orbit and assume without losing generality that $\gamma(s)\in\R_1$ for $s\to\infty$; say in the region $\theta\in(0,\pi/2)$. By monotonicity, $\gamma(s)$ has some $\theta=\theta_0\in(0,\pi/2],$ as asymptote as $s\to\infty$. We express $\gamma$ as a vertical graph $(x,\theta(x))$, which satisfies
$$
\theta'(x)=\frac{\theta'(s)}{x'(s)}=\frac{2}{\cos\theta\sqrt{1+4\tau^2x^2}}\left(\cos\theta+\lambda\sqrt{1+4\tau^2x^2}\right).
$$
Note that there must exist a sequence $x_n\to\infty$ such that $\theta'(x_n)\to0$, but substituting above we get $\theta'(x_n)\to\frac{2\lambda}{\cos\theta_0}$, a contradiction.

Fix $x_0$ and consider the orbit $\gamma_0$ that passes through the point $(x_0,0)$ at $s=0$. For $s>0$, we know that $\gamma_0$ necessarily intersects the line $\theta=\pi/2$ at some $s_1>0$ and then $\gamma_0$ enters the region $\R_2$, with $\theta\in(\pi/2,\pi)$, until intersecting the line $\theta=3\pi/2$ at some $s_2>s_1$. Finally, $\gamma_0$ ends up intersecting the line $\theta=2\pi$ at some $(x_1,2\pi)$, at $s_3>s_2$. 

We prove next that $x_1<x_0$, and the proof consists on a series of claims. We begin by considering the system for the constant mean curvature $H=\lambda$,
$$
\left\lbrace
\begin{array}{l}
x'(s)=\dfrac{\cos\theta(s)}{\sqrt{1+4\tau^2x(s)^2}},\\
\theta'(s)=\dfrac{2\lambda}{\sqrt{1+4\tau^2x(s)^2}}.
\end{array}
\right.
$$
Let $\sigma=\sigma(t)=(x_\sigma,\theta_\sigma)$ be the solution of this system with initial conditions $\sigma(0)=(x_0,0)$. Recall that $\theta'_\sigma(t)\neq0$ and hence by the inverse function theorem we can express $\sigma$ as an horizontal graph $\sigma=x_\sigma(\theta)$. Furthermore, 
$$
\frac{dx_\sigma}{d\theta}=\dfrac{dx_\sigma/dt}{d\theta_\sigma/dt}=\frac{\cos\theta}{2\lambda},
$$
hence $\sigma$ can be explicitly integrated as $\sigma=(x_\sigma(\theta),\theta)$, where $x_\sigma(\theta)=\frac{\sin\theta}{2\lambda}+x_0$. In particular, $\sigma$ is a periodic orbit in the vertical direction.  Since $x'(0)=x_\sigma'(0)$ and $\theta'(0)>\theta_\sigma'(0)$, a comparison argument yields that $\gamma_0$ lies locally at the left-hand side of $\sigma$. Furthermore, this is fulfilled in the region $\theta\in(0,\pi/2)$, where $\gamma_0$ lies at the left-hand side of $\sigma$.

We claim that $\gamma_0$ and $\sigma$ cannot reach the line $\theta=\pi/2$ at the same point $(\hat{x},\pi/2)$. Arguing by contradiction, assume that both curves reach the same point for instants $\hat{s},\hat{t}>0$ and $x(s)<x_\sigma(t)$ for every $s\in(0,\hat{s})$ and $t\in(0,\hat{t})$. The horizontal graph $\sigma=x_\sigma(\theta)$ satisfies
$$
x_\sigma'(\pi/2)=x_\sigma'''(\pi/2)=0,\qquad x_\sigma''(\pi/2)=-\frac{1}{2\lambda}.
$$
Now, we express $\gamma_0$ as a horizontal graph $\gamma_0=x(\theta)$ whose derivative satisfies
$$
x'(\theta)=\frac{dx}{d\theta}=\frac{\cos\theta\sqrt{1+4\tau^2x^2(\theta)}}{2(\cos\theta+\lambda\sqrt{1+4\tau^2x^2(\theta)})}.
$$
This time, the derivatives of $x(\theta)$ at $\theta=\pi/2$ are
$$
x'(\pi/2)=0,\quad x''(\pi/2)=-\frac{1}{2\lambda},\quad x'''(\pi/2)=-\frac{1}{\lambda^2\sqrt{1+4\tau^2\hat{x}^2}}
$$
A Taylor expansion around $\theta=\pi/2$ yields that for $\theta<\pi/2$ close enough to $\pi/2$, the graph $x(\theta)$ is larger than the graph of $x_\sigma(\theta)$, a contradiction. Thus, $\gamma_0$ intersects $\theta=\pi/2$ at the left-hand side of $\sigma$, and stays there locally. See Fig. \ref{fig:fasescyllambdamayorigual1}, left.


\begin{figure}[H]
\centering
\begin{tikzpicture}[scale=1]
\node[anchor=south east,inner sep=0] at (-1,0){\includegraphics[width=.4\textwidth]{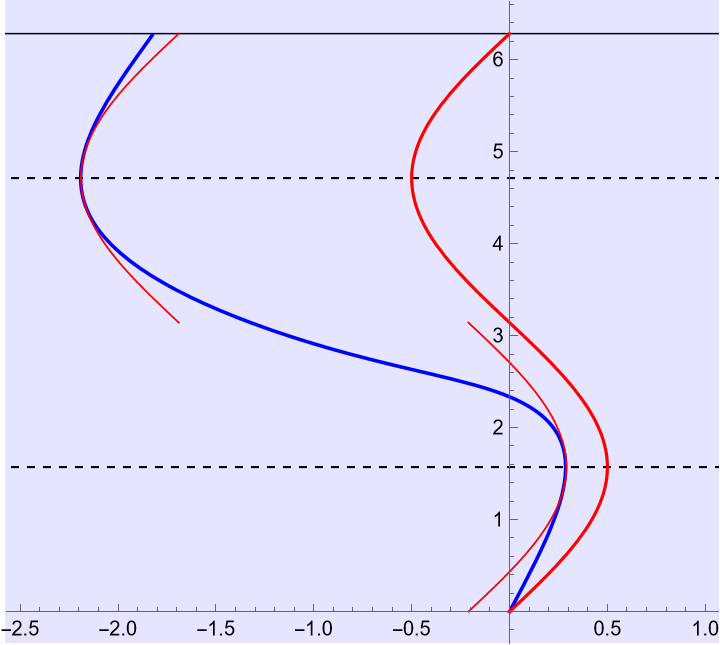}};
\node[anchor=south west,inner sep=0] at (1,0){\includegraphics[width=.4\textwidth]{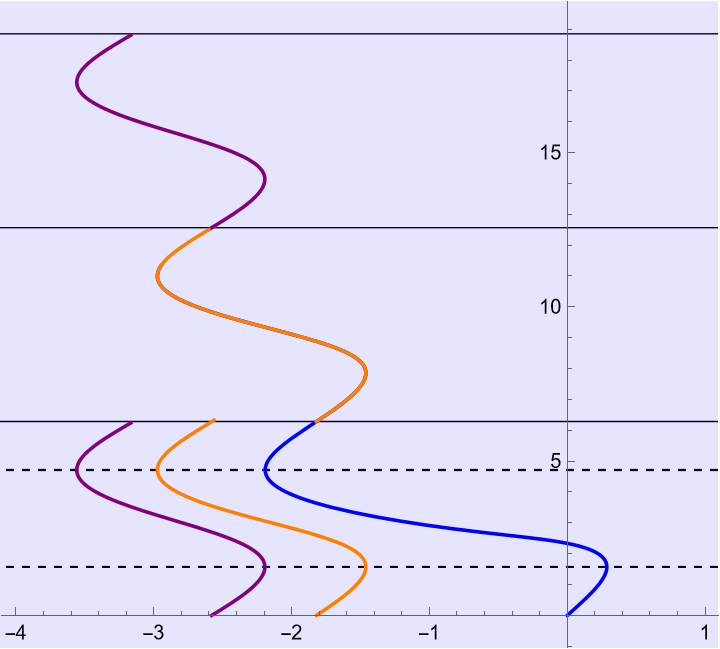}};
\draw (.5,.7) node[scale=.8]{$\theta=\frac{\pi}{2}$};
\draw (.5,1.5) node[scale=.8]{$\theta=\frac{3\pi}{2}$};
\draw (8,2) node[scale=.8]{$\theta=2\pi$};
\draw (8,3.65) node[scale=.8]{$\theta=4\pi$};
\draw (8,5.3) node[scale=.8]{$\theta=6\pi$};
\draw (6.5,.5) node[scale=.8]{$\gamma_0$};
\draw (4.3,.5) node[scale=.8]{$\gamma_1$};
\draw (3.4,.5) node[scale=.8]{$\gamma_2$};
\end{tikzpicture}
\caption{Left: the comparison between the orbits $\gamma_0$, in blue, and $\sigma$, in red, around a point at the line $\theta=\pi/2$. Right: the structure of the }
\label{fig:fasescyllambdamayorigual1}
\end{figure}

We can now prove that $x_1<x_0$. Arguing by contradiction, assume first that $x_1>x_0$. By continuity of the orbits and since $\sigma$ reaches again the point $(x_0,2\pi)$ by its periodicity, there must exist two instants $s_0,t_0$ such that $\gamma_0(s_0)=\sigma(t_0)$ and then $\gamma_0$ lies at the right-hand side of $\sigma$ for eventually reaching the point $(x_1,2\pi)$. Assume that such intersection occurs at the region $\theta\in(\pi/2,3\pi/2)$. Since $x'(s_0)=x_\sigma'(t_0)$, then necessarily $\theta'(s_0)>\theta'_\sigma(t_0)$ for $\gamma_0$ to cross $\sigma$. But substituting one has
$$
\theta'(s_0)=\frac{2\cos\theta(s_0)}{1+4\tau^2x(s_0)^2}+\frac{2\lambda}{\sqrt{1+4\tau^2x(s_0)^2}},\qquad\theta'_\sigma(t_0)=\frac{2\lambda}{\sqrt{1+4\tau^2x_\sigma(t_0)^2}}, 
$$
and since $x(s_0)=x_\sigma(t_0)$ since $\gamma_0(s_0)=\sigma(t_0)$ then $\theta'(s_0)<\theta'_\sigma(t_0)$, a contradiction. If the intersection occurs at the region $\theta\in(3\pi/2,2\pi)$ we arrive to a similar contradiction. The case where $\gamma_0$ and $\sigma$ both reach the line $\theta=3\pi/2$ at the same point is discarded in the same way as for the line $\theta=\pi/2$. This proves that $x_1\leq x_0$. Finally, recall that at $\theta=2\pi$ we have a similar comparison as at $\theta=0$, hence $\sigma$ lies locally at the left-hand side of $\gamma_0$ around $(x_0,2\pi)$ and in particular $\gamma_0$ cannot reach the point $(x_0,\pi/2)$, proving that necessarily $x_1<x_0$. 

Once we know the behavior of $\gamma_0$ in the strip $\theta\in(0,2\pi)$ and by $2\pi$-periodicity, the description of the orbits is now clear. Consider the orbit $\gamma_1$ passing through the point $(x_1,0)$. Since $\gamma_0$ and $\gamma_1$ cannot intersect by uniqueness, all the arguments above ensure that $\gamma_1$ has some $(x_2,2\pi)$ as endpoint, with $x_2<x_1$. Also, note that $\gamma_0$ and a translation of $\gamma_1$ can be smoothly glued together at the point $(x_1,2\pi)$, by just considering the vertical translation $\gamma_1+(0,2\pi)$. This forms a larger orbit, that will still be denoted by $\gamma_0$, that has the point $(x_2,4\pi)$ as endpoint. See Fig. \ref{fig:fasescyllambdamayorigual1}, right, the orbit in orange.

Now we consider the orbit $\gamma_2$ passing through $(x_2,0)$, which lies at the left-hand side of $\gamma_1$ by uniqueness, and that has some $(x_3,2\pi)$ as endpoint, with $x_3>x_2$. We glue again the larger orbit $\gamma_0$ with the translation $\gamma_2+(0,4\pi)$, see Fig. \ref{fig:fasescyllambdamayorigual1}, right, the orbit in purple. This process generates a strictly decreasing sequence $x_n$ and orbits $\gamma_n$ having $(x_n,0)$ and $(x_{n+1},2\pi)$ as endpoints. We claim that $x_n\to-\infty$. Arguing by contradiction, assume that $x_n\to x_\infty>-\infty$ and let $\gamma_\infty$ be the orbit that has $(x_\infty,0)$ and $(x_\infty^1,2\pi)$ as endpoints. Note that $x_\infty^1>x_\infty$, but we have $x_n\to x_\infty$ and $x_{n+1}\to x_\infty^1$, hence $x_\infty^1=x_\infty$ and we arrive to a contradiction. Thus, the $x$-coordinate of the orbit $\gamma_0$ is unbounded as $s\to\infty$. We have a similar situation for $s<0$ and we obtain a complete orbit $\gamma_0$ whose $x$-coordinate diverges as $|s|\to\infty$. The profile curve $\alpha(s)=(x(s),0,z(s))$ has unbounded $x$-coordinate and the $z$-coordinate increases and decreases, depending on if $\theta\in(0,\pi)$ or $\theta\in(\pi,2\pi)$ and their $2\pi$-translations, respectively. In particular, $\alpha$ has infinitely-many self-intersections. We refer again to Fig. \ref{fig:phaseplanetilted}, bottom.


Now, assume that $\lambda<1$. In this case, the curve $\Lambda$ given by Eq. \eqref{eqcurvaLambda} exists and the $\theta$-coordinate of an orbit $\gamma$ decreases whether $\gamma$ lies contained in the region $\Omega$ bounded by $\Lambda$. See Fig. \ref{fig:fasescyllambdamenor1}, left.

\begin{figure}[H]
\centering
\begin{tikzpicture}[scale=1]
\node[anchor=south east,inner sep=0] at (-1,0){\includegraphics[width=.4\textwidth]{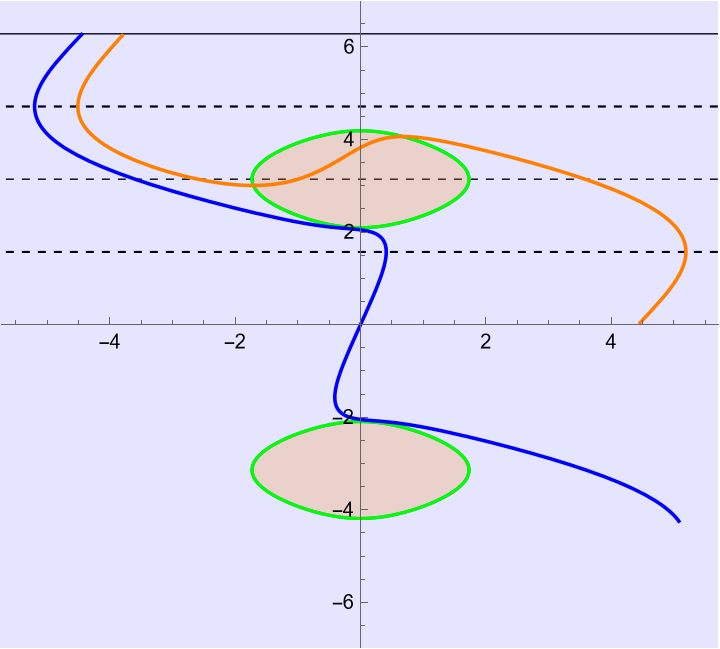}};
\node[anchor=south west,inner sep=0] at (1,-1.5){\includegraphics[width=.6\textwidth]{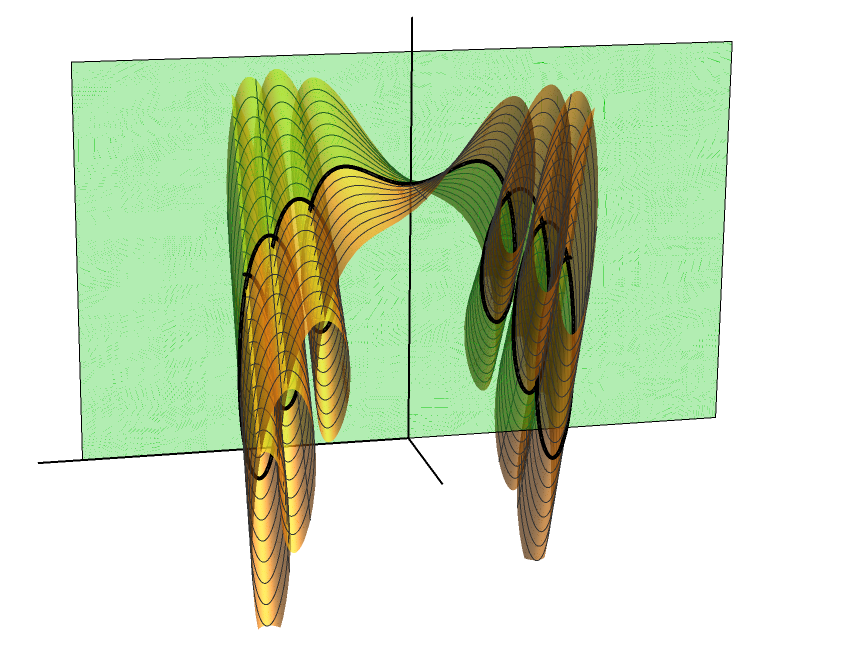}};
\draw (1,.8) node[scale=1]{$x$};
\draw (1,4.5) node[scale=1]{$y=0$};
\draw (6,.5) node[scale=1]{$y$};
\draw (5.75,5.75) node[scale=1]{$z$};
\end{tikzpicture}
\caption{The phase plane of Eq. \eqref{eqsystemcylindrical} for $\lambda<1$, where we have plotted the curve $\Lambda$ and two orbits following the motion in $\R$.}
\label{fig:fasescyllambdamenor1}
\end{figure}

The properties of the orbits are similar as the ones previously deduced for $\lambda\geq1$. The only difference here is that an orbit may enter $\Omega$ and decrease its $\theta$-coordinate until leaving $\Omega$ again and then having the $\theta$-coordinate increasing. In any ways, this behavior can only occur a finite number of times since the $x$-coordinate diverges. Geometrically, the profile curve $\alpha(s)=(x(s),0,z(s))$ changes its convexity when its associated orbit $\gamma$ enters and leaves $\Omega$. See Fig. \ref{fig:fasescyllambdamenor1}, right.

\section{Rotational $\lambda$-translators}\label{s4}

In this section we focus on $\lambda$-translators invariant under a one-parameter group of rotations. Given a planar curve $\alpha(s)=(x(s),0,z(s))$, the rotational surface with \emph{base or profile curve} $\alpha(s)$ is parametrized by
$$
\psi(s,t)=(x(s)\cos t,x(s)\sin t,z(s)).
$$
A unit normal vector field is
$$
N=\frac{1}{\sqrt{(1+\tau^2x^2)x'^2+z'^2}}\left(-(\tau\sin t xx'+\cos tz')E_1+(\tau\cos t xx'-\sin tz')E_2+x'E_3\right),
$$
and the covariant derivatives are
\begin{align*}
\nabla_{\psi_s}\psi_s&=(2\tau\sin tx'z'+\cos tx'')E_1+(-2\tau\cos tx'z'+\sin tx'')E_2+z''E_3,\\
\nabla_{\psi_s}\psi_t&=(-\sin t(1+\tau^2x^2)x'+\tau\cos txz')E_1+(\cos t(1+\tau^2x^2)x'^2+\tau\sin txz')E_2-\tau xx'E_3,\\
\nabla_{\psi_t}\psi_t&=-\cos t x(1+2\tau^2x^2)E_1-\sin tx(1+2\tau^2x^2)E_2,
\end{align*}
hence we compute the coefficients of the second fundamental form
$$
e=\frac{-z'(2\tau^2 xx'^2+x'')+x'z''}{\sqrt{(1+\tau^2x^2)x'^2+z'^2}},\quad f=\frac{\tau x(\tau^2x^2x'^2-z'^2)}{\sqrt{(1+\tau^2x^2)x'^2+z'^2}},\quad g=\frac{x(1+2\tau^2x^2)z'}{\sqrt{(1+\tau^2x^2)x'^2+z'^2}}.
$$
Similarly to the case of $\lambda$-grim reapers, we consider the arc-length parameter
$$
x'=\frac{\cos\theta}{\sqrt{1+\tau^2x^2}},\qquad z'=\sin\theta,
$$
and the mean curvature and angle function of $\psi$ are
$$
2H=\frac{\sin\theta}{x}+\sqrt{1+\tau^2x^2}\theta',\qquad \langle N,E_3\rangle=\frac{\cos\theta}{\sqrt{1+\tau^2x^2}}.
$$
From Eq. \eqref{eqlambdatranslator} we conclude that the coordinates of the base curve of a rotational $\lambda$-translator fulfill the following ODE system
\begin{equation}\label{eqsystemrot0}
\left\lbrace
\begin{array}{l}
\vspace{.2cm} x'=\dfrac{\cos\theta}{\sqrt{1+\tau^2x^2}},\\
z'=\sin\theta,\\
\theta'=\dfrac{1}{\sqrt{1+\tau^2x^2}}\left(2\left(\dfrac{\cos\theta}{\sqrt{1+\tau^2x^2}}+\lambda\right)-\dfrac{\sin\theta}{x}\right).
\end{array}
\right.
\end{equation}
Conversely, given $(x_0,z_0,\theta_0)\in\r^3$ with $x_0>0$, there exists a unique solution to this system $\alpha(s)=(x(s),0,z(s))$ which generates a $\lambda$-translator under the image of the one-parameter family of rotations.

Note that system \eqref{eqsystemrot0} is singular at $x=0$, hence we cannot ensure the existence of rotational $\lambda$-translators intersecting the rotation axis by standard theory. Nevertheless, we prove that any $\lambda$-translator that approaches the rotation axis does it orthogonally. This has as consequence on the phase plane that an orbit cannot satisfy $x(s)\to0$ and $\theta(s)\to\theta_0\neq k\pi,\ k\in\mathbb{Z}$. 

\begin{pro}\label{propintersectionorthogonal}
If a rotational $\lambda$-translator intersects the rotation axis, then does it orthogonally.
\end{pro}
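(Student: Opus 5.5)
We argue on the profile curve, using system \eqref{eqsystemrot0}. Assume the $\lambda$-translator meets the rotation axis, so that along an arc $s\to s_0$ we have $x(s)\to0$. Since the surface is smooth (at least $C^2$) there, the profile curve has a well-defined limiting tangent, so $\theta(s)\to\theta_0$. We must show $\sin\theta_0=0$, i.e. $\theta_0=k\pi$: then the tangent is horizontal, which is orthogonal to the vertical axis.

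The key observation is that in the expression
$$
2H=\frac{\sin\theta}{x}+\sqrt{1+\tau^2x^2}\,\theta'
$$
the left-hand side stays bounded near the axis. Indeed $H=\langle N,\partial_z\rangle+\lambda$ and $|\langle N,\partial_z\rangle|\leq1$. So if $\sin\theta_0\neq0$, the term $\sin\theta/x$ blows up like $\sin\theta_0/x$. Then $\theta'$ must behave like $-\sin\theta_0/x$ to compensate. We turn this into a contradiction by changing the parameter from $s$ to $x$. As $x\to0$ with $\cos\theta_0\neq0$, the relation $x'=\cos\theta/\sqrt{1+\tau^2x^2}$ shows that $x$ is a local parameter. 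If instead $\cos\theta_0=0$, the curve is vertical and the claim is still to be refuted, so we treat that case separately below. With $x$ as parameter,
$$
\frac{d\theta}{dx}=\frac{\theta'}{x'}=\frac{1}{\cos\theta}\Big(2\big(\tfrac{\cos\theta}{\sqrt{1+\tau^2x^2}}+\lambda\big)-\frac{\sin\theta}{x}\Big)\sim-\frac{\tan\theta_0}{x}.
$$
Integrating gives $|\theta(x)-\theta(x_1)|\gtrsim|\tan\theta_0|\,|\log x|\to\infty$. This contradicts $\theta\to\theta_0$. Hence $\sin\theta_0=0$ whenever $\cos\theta_0\neq0$.

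It remains to exclude $\theta_0=\pm\pi/2$, i.e. the profile curve reaching the axis vertically, which would mean the curve runs along the axis. This is the step I expect to be the delicate one, since $x$ is no longer a good parameter. I would argue as follows. If $\alpha$ is tangent to the axis at the point where it hits it, then $x'(s_0)=0$. The rotated surface near that point would be singular (a cusp or cone), contradicting smoothness of the immersion. Alternatively, stay with $s$: near $s_0$ we have $x(s)\approx c(s-s_0)^2$ or smaller, so $\sin\theta/x\approx\pm1/x$ is non-integrable in $s$. Then $\theta'=-\sin\theta/(x\sqrt{1+\tau^2x^2})+O(1)$ forces $|\theta|$ to diverge, again contradicting convergence of $\theta$.

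In either case $\theta_0\in\pi\mathbb{Z}$. Hence $\alpha$ meets the axis with horizontal tangent, and the surface meets the axis orthogonally. As a consequence, in the phase plane no orbit satisfies $x\to0$ with $\theta\to\theta_0\neq k\pi$.
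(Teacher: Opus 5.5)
Your proof is correct, but it follows a different route from the paper. The paper computes $(xz')'=(x\sin\theta)'$ along the profile curve and finds that it equals a quantity of order $x$. It then integrates from the axis point, divides by $x(s)$ and applies L'H\^opital to get $\sin\theta(s)\to 0$. So it controls $\sin\theta$ directly through an integrated, flux-type identity.

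You instead fix the limiting angle $\theta_0$ and show that, if $\sin\theta_0\neq0$, the singular term $\sin\theta/x$ in $\theta'$ is not integrable, so $\theta$ cannot converge. This needs no special identity and is very transparent. The price is that you take the existence of $\lim\theta$ as an input.

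Your case split is unnecessary. Since $|x'|\le 1$ and $x\to0$ at a finite $s_0$, one has $x(s)\le|s-s_0|$, so $\int ds/x$ diverges near $s_0$. Then $\theta'=-\sin\theta/(x\sqrt{1+\tau^2x^2})+O(1)$ forces $|\theta|\to\infty$ for every $\theta_0$ with $\sin\theta_0\neq0$, including $\theta_0=\pm\pi/2$. If $s_0$ is infinite, $\theta'$ itself blows up.

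Be careful with the appeal to smoothness. If the surface is genuinely $C^1$ at a point $p$ of the axis, then $T_pM$ is invariant under every rotation about the axis and is therefore horizontal. The statement then follows without using the $\lambda$-translator equation at all. What the paper actually uses later is the phase-plane version: no orbit has $x\to0$ and $\theta\to\theta_0\notin\pi\mathbb{Z}$. Smoothness is not available there, so your ``cusp'' argument does not apply, and the integrability argument is the one that carries the proof.
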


\begin{proof}
Let $\alpha(s)=(x(s),0,z(s))$ be the profile curve of a rotational surface whose coordinates satisfy \eqref{eqsystemrot0}. Assume that $x(s)\to0$ as $s\to s_0$; we can assume $s_0=0$ without losing generality. We have
\begin{align*}
(xz')'&=x'z'+xz''=x'z'+xx'\theta'\\
&=x'z'+\frac{xx'}{\sqrt{1+\tau^2x^2}}\left(2(x'+\lambda)-\frac{z'}{x}\right)\\
&=x'z'\left(1-\frac{1}{\sqrt{1+\tau^2x^2}}\right)+\frac{2xx'^2}{\sqrt{1+\tau^2x^2}}+\frac{2\lambda xx'}{\sqrt{1+\tau^2x^2}}.
\end{align*}
We integrate from $s=0$ to fixed $s$, obtaining
$$
x(s)\sin\theta(s)-\frac{4\lambda\tau^2x(s)^2}{2\tau^2(1+\sqrt{1+\tau^2x(s)^2})}=\int_0^sx'(t)z'(t)\left(1-\frac{1}{\sqrt{1+\tau^2x(t)^2}}\right)+\frac{2x(t)x'(t)^2}{\sqrt{1+\tau^2x(t)^2}}\,dt.
$$
For $s>0$ small enough we have $x(s)>0$ and we divide the above equation by $x(s)$,
$$
\sin\theta(s)-\frac{4\lambda\tau^2x(s)}{2\tau^2(1+\sqrt{1+\tau^2x(s)^2})}=\frac{1}{x(s)}\int_0^sx'(t)z'(t)\left(1-\frac{1}{\sqrt{1+\tau^2x(t)^2}}\right)+\frac{2x(t)x'(t)^2}{\sqrt{1+\tau^2x(t)^2}}\,dt.
$$
Letting $s\to0$ and applying the L'Hôpital rule in the right-hand side we get $\sin\theta(0)=0$, concluding $\theta(0)=k\pi,\ k\in\mathbb{Z}$ and therefore the intersection is orthogonal.
\end{proof}

The existence of orbits having a point $(0,k\pi)$ as finite endpoint follows from the existence of radial $\lambda$-translators intersecting orthogonally the rotation axis, see \cite{bue4}.

\begin{defi}\label{defiM+-}
We define $M_+$ and $M_-$ as the unique $\lambda$-translators intersecting orthogonally the rotation axis and whose unit normal at such intersection is $\partial_z$ and $-\partial_z$, respectively.
\end{defi}

\subsection{The phase plane of rotational $\lambda$-translators}\label{s41}

As in the case of $\lambda$-grim reapers, we observe that the second equation in \eqref{eqsystemrot0} can be obtained by means of the first and the third, hence we study the system.
\begin{equation}\label{eqsystemrotational}
\left\lbrace
\begin{array}{l}
\vspace{.2cm} x'=\dfrac{\cos\theta}{\sqrt{1+\tau^2x^2}},\\
\theta'=\dfrac{1}{\sqrt{1+\tau^2x^2}}\left(2\left(\dfrac{\cos\theta}{\sqrt{1+\tau^2x^2}}+\lambda\right)-\dfrac{\sin\theta}{x}\right).
\end{array}
\right.
\end{equation}
We define the phase plane $\R$ of \eqref{eqsystemrotational} as the set of orbits $\gamma(s)=(x(s),\theta(s))$, hence $\R=\{(x,\theta)\colon x>0,\theta\in\r\}$. Again, we see that $\R$ is $2\pi$-periodic in the $\theta$-direction, hence we can derive all the properties of $\R$ by restricting to any strip $\theta\in[\theta_0,\theta_0+2\pi],\ \theta_0\in\r$.

There exists a unique equilibrium $e_0=(1/(2\lambda),\pi/2)$, which corresponds to a vertical cylinder of constant mean curvature $\lambda$. The structure of $e_0$ is deduced by analyzing the corresponding linearized system. If we express \eqref{eqsystemrotational} as
$$
\left(\begin{matrix}
x\\ \theta
\end{matrix}\right)'=\left(\begin{matrix}
P(x,\theta)\\ Q(x,\theta)
\end{matrix}\right)=L(x,\theta),
$$
then 
$$
JL(e_0)=\left(
\begin{matrix}
0&-\dfrac{2\lambda}{\sqrt{4\lambda^2+\tau^2}}\\
\dfrac{8\lambda^3}{\sqrt{4\lambda^2+\tau^2}}&-\dfrac{8\lambda^2}{4\lambda^2+\tau^2}
\end{matrix}
\right),
$$
whose eigenvalues are
$$
\frac{4\lambda^2}{4\lambda^2+\tau^2}(-1\pm\sqrt{1-4\lambda^2-\tau^2}).
$$
Such eigenvalues are always distinct and either they are real and negative, or complex conjugate and of real negative part. In any case, $e_0$ is a stable node (in the first case) or spiral (in the second case) and the orbits close enough to $e_0$ converge to it as $s\to\infty$.

Next we turn our attention to the motion of the orbits in $\R$. We define the curve $\Gamma$ implicitly by
\begin{equation}\label{eqGamma}
\Gamma=\left\lbrace(x,\theta)\in\R\colon 2x\left(\dfrac{\cos\theta}{\sqrt{1+\tau^2x^2}}+\lambda\right)=\sin\theta\right\rbrace.
\end{equation}
The main difference here with the case of $\lambda$-grim reapers is that in the latter case the analog curve $\Lambda$ given by Eq. \eqref{eqcurvaLambda} was explicit, but in this case $\Gamma$ is given implicitly. This also differs from the study of the author for prescribed mean curvature surfaces in several ambient spaces, starting at \cite{bgm}.

The next result is of capital importance since it exhibits the main properties of the curve $\Gamma$, which is the one that determines the motion of any orbit in the phase plane. By the $2\pi$-periodicty of the phase plane in the $\theta$-direction, it suffices to restrict to the strip $\theta\in[0,2\pi]$.
\begin{teo}
Let us define
\begin{equation}\label{eqxlambda}
x_\lambda=\left\lbrace
\begin{array}{cll}
0&\mathrm{if}&\lambda\geq1,\\
\dfrac{\sqrt{1-\lambda^2}}{\lambda\tau}&\mathrm{if}&\mathrm{\lambda<1}.
\end{array}
\right.
\end{equation}
Then, $\Gamma$ is a horizontal graph $(x(\theta),\theta)$ for $\theta\in[0,\pi]$, that joins the points $(0,0)$ and $(x_\lambda,\theta)$, and there exists a unique $\theta_*\in(\pi/2,\pi)$ such that $x'(\theta_*)=0$, where $x$ reaches a global maximum. Furthermore,
\begin{itemize}
\item If $\lambda\geq1$, then $\Gamma$ only appears in the region $\theta\in[0,\pi]$.
\item If $\lambda<1$, then $\Gamma$ also appears in the region $[-\pi,-\pi/2)$ as a vertical graph $(x,\theta(x))$ for $x\in[0,x_\lambda]$, that joins the points $(0,-\pi)$ and $(x_\lambda,-\pi)$, and there exists a unique $x_*\in(0,x_\lambda)$ such that $\theta'(x_*)=0$, where $\theta$ reaches a global maximum. By $2\pi$-periodicity of $\R$, this component can be translated $(x,\theta(x))+(0,2\pi)$ and smoothly glued with the component of $\Gamma$ in $\theta\in[0,\pi]$, hence $\Gamma$ appears in the region $\theta\in(0,3\pi/2)$.
\end{itemize}
\end{teo}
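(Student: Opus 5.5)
The plan is to study $\Gamma$ as the zero set of the smooth function
$$
F(x,\theta)=2x\left(\frac{\cos\theta}{\sqrt{1+\tau^2x^2}}+\lambda\right)-\sin\theta,
$$
and to apply the implicit function theorem, with the sign of one partial derivative of $F$ controlled in each region. I assume $\lambda>0$, as the definition of $x_\lambda$ implicitly does. The partial derivatives are
$$
F_x=2\lambda+\frac{2\cos\theta}{(1+\tau^2x^2)^{3/2}},\qquad F_\theta=-\frac{2x\sin\theta}{\sqrt{1+\tau^2x^2}}-\cos\theta .
$$

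\emph{Component in $\theta\in[0,\pi]$.} Fix $\theta\in(0,\pi)$ and set $g_\theta(x)=2x\bigl(\cos\theta/\sqrt{1+\tau^2x^2}+\lambda\bigr)$. Then $g_\theta(0)=0$ and $g_\theta(x)\to\infty$ as $x\to\infty$. Moreover $g_\theta'=F_x$ is nondecreasing in $x$ when $\cos\theta<0$ and positive when $\cos\theta\ge0$. Hence $g_\theta$ is either increasing, or it first decreases below $0$ and then increases. In both cases the equation $g_\theta(x)=\sin\theta>0$ has exactly one positive root $x(\theta)$. At that root $F_x>0$, so $\theta\mapsto x(\theta)$ is smooth and $\Gamma\cap\{0<\theta<\pi\}$ is a horizontal graph.

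For the endpoints I solve $F(x,0)=0$ and $F(x,\pi)=0$ directly. At $\theta=0$ the only root is $x=0$. At $\theta=\pi$ the equation becomes $x\bigl(\lambda-1/\sqrt{1+\tau^2x^2}\bigr)=0$. If $\lambda\ge1$ its only root is $x=0$. If $\lambda<1$ it also has the root $\sqrt{1+\tau^2x^2}=1/\lambda$, that is $x=x_\lambda$. A continuity and compactness argument then shows $x(\theta)\to0$ as $\theta\to0$ and $x(\theta)\to x_\lambda$ as $\theta\to\pi$.

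Next, $x'(\theta)=-F_\theta/F_x$. For $\theta\in(0,\pi/2]$ we have $F_\theta<0$, so $x$ is increasing there. Near $\theta=\pi$ we have $F_\theta\approx 1>0$, so $x$ is decreasing. Hence some critical point $\theta_*\in(\pi/2,\pi)$ exists.

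For uniqueness I intend to show that every critical point is a strict local maximum. At a critical point $F_\theta=0$, and this gives $x''(\theta_*)=-F_{\theta\theta}/F_x$. I will substitute both relations $F=0$ and $F_\theta=0$ into $F_{\theta\theta}=-2x\cos\theta/\sqrt{1+\tau^2x^2}+\sin\theta$. From $F_\theta=0$, $\cos\theta=-2x\sin\theta/\sqrt{1+\tau^2x^2}$, so $F_{\theta\theta}=\sin\theta\bigl(1+4x^2/(1+\tau^2x^2)\bigr)>0$. Thus $x''(\theta_*)<0$. A smooth function of one variable whose critical points are all strict maxima has at most one of them, which gives uniqueness of $\theta_*$ and the global maximum.

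If $\lambda\ge1$, no other component exists. For $\theta\in(\pi,2\pi)$ we have $\sin\theta<0$, so $\Gamma$ there would need $2x\lambda<-2x\cos\theta/\sqrt{1+\tau^2x^2}\le 2x$ with $x>0$. This forces $\lambda<1$, a contradiction.

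\emph{Component in $\theta\in[-\pi,-\pi/2)$ when $\lambda<1$.} The same inequality forces $\cos\theta<0$, so this component lies in $(-\pi,-\pi/2)$. On this range both $\sin\theta<0$ and $\cos\theta<0$, so $F_\theta>0$. I will fix $x\in(0,x_\lambda)$ and use two boundary values. First, $F(x,-\pi)=2x\bigl(\lambda-1/\sqrt{1+\tau^2x^2}\bigr)<0$, because $x<x_\lambda$. Second, $F(x,-\pi/2)=2\lambda x+1>0$. Monotonicity in $\theta$ then gives a unique root $\theta(x)$, smooth in $x$, and it tends to $-\pi$ at $x=0$ and at $x=x_\lambda$.

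For $x\ge x_\lambda$ there is no root, because then $F(x,\theta)\ge F(x,-\pi)\ge0$ and the inequality is strict for $\theta>-\pi$. Since $\theta(x)$ equals $-\pi$ at both ends and exceeds $-\pi$ inside, it has an interior maximum. The critical points satisfy $F_x=0$, i.e. $\cos\theta=-\lambda(1+\tau^2x^2)^{3/2}$. To get uniqueness I will again show $\theta''<0$ at every critical point, now computing $F_{xx}$ there with $F_\theta>0$.

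Finally, the gluing is done at the points $(0,\pi)$ and $(x_\lambda,\pi)$. At these points $\nabla F\neq0$: at $(0,\pi)$ one has $F_\theta=1$, and at $(x_\lambda,\pi)$ one has $F_\theta=1$ as well. The implicit function theorem therefore makes $\Gamma$ a smooth curve through them. Since $F$ is $2\pi$-periodic in $\theta$, the translated piece $(x,\theta(x))+(0,2\pi)$ and the piece over $[0,\pi]$ are parts of one smooth level curve.

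The main obstacle I expect is the uniqueness of the interior extrema, especially in the second component, where the sign of $F_{xx}$ at points with $F=F_x=0$ is not immediate. If the direct second-derivative test fails there, I would parametrize the component by $\theta$ instead and argue as in the first part.
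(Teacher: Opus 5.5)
Your proposal is correct and follows essentially the paper's route (implicit function theorem for $F$, sign control of $F_x$ and $F_\theta$ region by region, and a second-derivative test at critical points); counting roots through the convexity of $g_\theta$, and for the second component through monotonicity in $\theta$ at fixed $x$, is a slightly more direct version of the paper's local-to-global step, and it makes the paper's auxiliary graphs of $f$ and $g$ unnecessary. The obstacle you anticipate does not exist, since $F_{xx}=-6\tau^2x\cos\theta/(1+\tau^2x^2)^{5/2}>0$ wherever $\cos\theta<0$ (no need to use $F=F_x=0$), so $\theta''(x_*)=-F_{xx}/F_\theta<0$ exactly as in the paper; the one line still to add is that for $\lambda<1$ continuity and compactness alone cannot choose between the roots $0$ and $x_\lambda$ of $F(\cdot,\pi)$, but your description of $g_\theta$ does: for $\theta$ near $\pi$ the root lies beyond the positive zero of $g_\theta$, and that zero tends to $x_\lambda$.
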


\begin{proof}
We define
\begin{equation}\label{eqF}
F(x,\theta)=2x\left(\dfrac{\cos\theta}{\sqrt{1+\tau^2x^2}}+\lambda\right)-\sin\theta.
\end{equation}
Recall that $F(0,0)=0$ and
\begin{equation}\label{eqpartialFtheta}
\frac{\partial F}{\partial\theta}(x,\theta)=-\cos\theta-\frac{2x\sin\theta}{\sqrt{1+\tau^2x^2}},\qquad\frac{\partial F}{\partial\theta}(0,0)=-1.
\end{equation}
The implicit function theorem ensures the existence of $\epsilon>0$ and a function $\theta=\theta(x),\ x\in(-\epsilon,\epsilon)$, such that $\theta(0)=0$ and $F(x,\theta(x))=0$ for $x\in(-\epsilon,\epsilon)$ As a matter of fact, $\theta'(0)=2(\lambda+1)$ and thus $\theta(x)\in(0,\pi/2)$ for $x\in(0,\epsilon)$. Let us describe the behavior of the graph $(x,\theta(x))$. Note that $\theta'(x)=0$ if and only if
\begin{equation}\label{eqpartialFx}
\frac{\partial F}{\partial x}(x,\theta)=2\left(\lambda+\frac{\cos\theta}{(1+\tau^2x^2)^{3/2}}\right)=0,
\end{equation}
and thus $\theta(x)$ is strictly increasing in $\theta\in(0,\pi/2)$. From Eq. \eqref{eqF} we see that it cannot exist a sequence $(x_n,\theta_n)$ with $F(x_n,\theta_n)=0$ and $x_n\to\infty$, and thus the curve $\Gamma$ either intersects the line $\theta=\pi/2$ or stops being a graph $(x,\theta(x))$ at a finite point $(x_0,\theta_0)$, where $\theta'(x_0)=\infty$. Nonetheless, the latter cannot occur due to Eq. \eqref{eqpartialFtheta} and we conclude that $\Gamma$ actually intersects the line $\theta=\pi/2$ and in the full region $\theta\in(0,\pi/2]$ it is a vertical graph $(x,\theta(x))$ with $\theta'(x)>0$. 

If $\lambda<1$, then $F(x_\lambda,\pi)=0$ and thus $(x_\lambda,\pi)\in\Gamma$. Furthermore, at such point we have 
$$
\frac{\partial F}{\partial\theta}(x_\lambda,\pi)=1,\quad \frac{\partial F}{\partial x}(x_\lambda,\pi)=2\lambda(1-\lambda^2),
$$
and consequently $\Gamma$ is also locally a graph $(x,\theta(x))$ around $(x_\lambda,\pi)$ with $\theta'(x_\lambda)<0$.

We begin by proving that for any $\theta_0\in(0,\pi)$ there exists a unique $x_0>0$ such that $F(x_0,\theta_0)=0$. We prove the existence first. Recall that $F(0,\theta_0)=-\sin\theta_0<0$ and $\lim_{x\to\infty}F(x,\theta_0)=\infty$. Thus, there exists $x_0>0$ such that $F(x_0,\theta_0)=0$. Now we prove that such $x_0$ is unique by proving that in fact $\Gamma$ is a local horizontal graph $x=x(\theta)$ around each $(x_0,\theta_0)$. Indeed, we already know that $\theta'(x)$ never vanishes in the region $\theta\in(0,\pi/2]$ and thus the vertical graph $(x,\theta(x))$ can be also written as a horizontal graph $(x(\theta),\theta)$ for such a range of $\theta$. Hence, fix some $(x_0,\theta_0)\in\Gamma$ with $\theta_0\in(\pi/2,\pi)$. Since
$$
\frac{\partial F}{\partial x}(x_0,\theta_0)=2\left(\frac{\cos\theta_0}{(1+\tau^2x_0^2)^{3/2}}+\lambda\right),
$$
and substituting the fact that $F(x_0,\theta_0)=0$ we conclude
\begin{equation}\label{eqauxi1}
\frac{\partial F}{\partial x}(x_0,\theta_0)=\frac{\sin\theta_0}{2x_0}-\frac{2\tau^2x_0^2\cos\theta_0}{(1+\tau^2x^2)^{3/2}}>0.
\end{equation}
Again, the implicit function theorem ensures that $\Gamma$ is a local graph $x=x(\theta)$ around any $(x_0,\theta_0)$ and by continuity this graph is global.

Now we prove the existence and uniqueness of $\theta_*\in(\pi/2,\pi)$ such that $x'(\theta_*)=0$. Note that the existence is only clear when $\lambda\geq1$ since in such a case $\Gamma$ is a compact horizontal graph joining the points $(0,0)$ and $(0,\pi)$; but if $\lambda<1$ then $\Gamma$ is a horizontal graph $x=x(\theta)$ joining $(0,0)$ and $(x_\lambda,\pi)$, and it could happen that $x(\theta)$ was strictly increasing. We know that $x'(\theta)=0$ if and only if $\frac{\partial F}{\partial\theta}(x,\theta)=0$, which writes equivalently by Eq. \eqref{eqpartialFtheta} as
$$
\frac{\sqrt{1+\tau^2x^2}}{2x}=-\tan\theta.
$$
Hence the existence of $\theta_*\in(\pi/2,\pi)$ such that $x'(\theta_*)=0$ is assured whenever the graph of the function $-\arctan\frac{\sqrt{1+\tau^2x^2}}{2x}$ intersects the curve $\Gamma$. Note that the negative branch of the $\arctan$ function is identified with $\pi+\arctan$ due to the $\pi$-periodicity of the $\tan$ function. Hence, we define
\begin{equation}\label{eqf}
f(x)=\pi-\arctan\frac{\sqrt{1+\tau^2x^2}}{2x},\qquad x\geq0.
\end{equation}
Recall that $f$ extends continuously at $x=0$ by taking the value $f(0)=\pi/2$ and $f'(x)>0,\ \forall x>0$. By continuity and since $\Gamma$ is a compact horizontal graph joining $(0,0)$ and $(x_\lambda,\pi)$, the graph $(x,f(x))$ intersects $\Gamma$ at a point $(x_*,\theta_*)$ with $x_*=x(\theta_*)$ and $\theta_*\in(\pi/2,\pi)$. Furthermore, such $(x_*,\theta_*)$ is unique, since implicit derivation again yields $x''(\theta_*)<0$, hence $x$ reaches a local maximum at $\theta_*$ and then cannot intersect again $f(x)$. See Fig. \ref{fig:curveGamma}.

\begin{figure}[H]
\centering
\begin{tikzpicture}[scale=1]
\node[anchor=south east,inner sep=0] at (-1,0){\includegraphics[width=.4\textwidth]{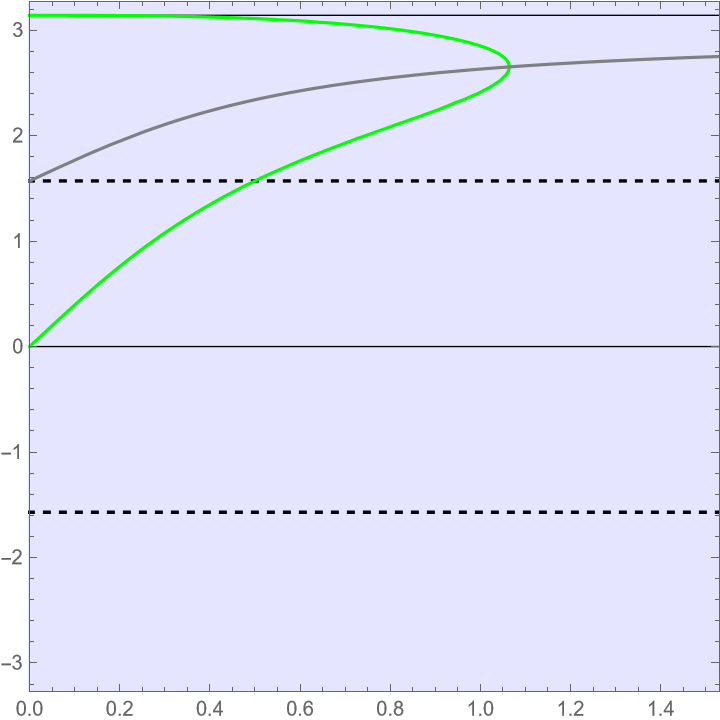}};
\node[anchor=south west,inner sep=0] at (1,0){\includegraphics[width=.4\textwidth]{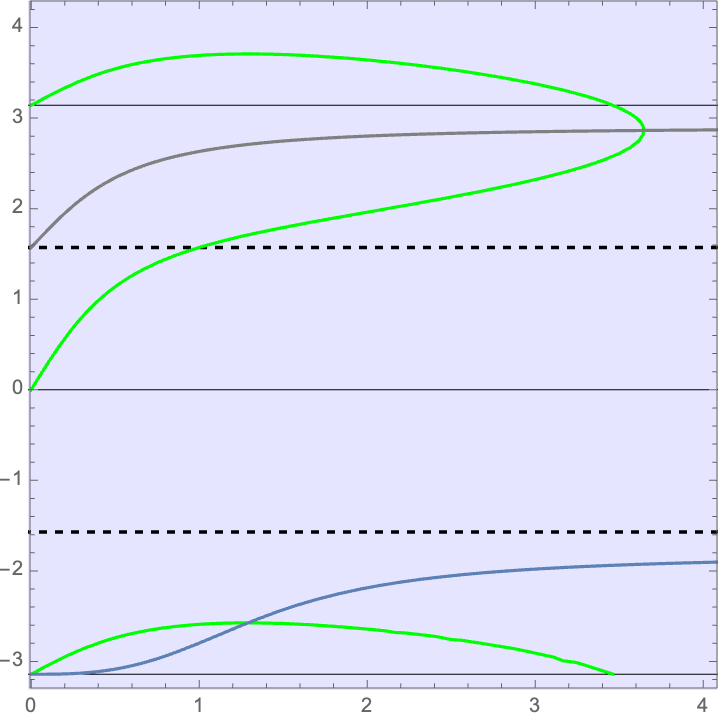}};
\draw (-.5,5.7) node[scale=1]{$f(x)$};
\draw (-.4,4.6) node[scale=1]{$\theta=\frac{\pi}{2}$};
\draw (-5.5,4) node[scale=1]{$\Gamma$};
\draw (2.5,3.5) node[scale=1]{$\Gamma$};
\draw (7.75,5.1) node[scale=1]{$f(x)$};
\draw (7.75,1.2) node[scale=1]{$g(x)$};
\draw (8,1.75) node[scale=1]{$\theta=\frac{-\pi}{2}$};
\end{tikzpicture}
\caption{Left: the curve $\Gamma$ in the phase plane for $\lambda\geq1$. Right: the curve $\Gamma$ in the phase plane for $\lambda<1$. The functions $f(x)$ and $g(x)$ determining the points of $\Gamma$ where the $x$-coordinate and the $\theta$-coordinate, respectively, reach local maxima have been also plotted.}
\label{fig:curveGamma}
\end{figure}

We finish by focusing on the case $\lambda<1$. We prove that the curve $\Gamma$ also appears for $\theta\in(-\pi,-\pi/2)$ as a vertical graph $\theta=\theta(x)$ that joins the points $(0,-\pi)$ and $(x_\lambda,-\pi)$. First, note that for $\theta\in(-\pi/2,0]$ the curve $\Gamma$ does not appear since $F(x,\theta)=0$ has no solution, hence we restrict when $\theta\in(-\pi,-\pi/2)$. Furthermore, when $\theta=-\pi$ we see that $x=0$ and $x=x_\lambda$ are the only solutions to this equation. At such points we have
$$
\frac{\partial F}{\partial\theta}(0,-\pi)=\frac{\partial F}{\partial\theta}(x_\lambda,-\pi)=1,
$$
hence the implicit function theorem ensures that around $(0,-\pi)$ and $(0,x_\lambda)$ we can express $\Gamma$ as vertical graphs $\theta=\theta_1(x)$ and $\theta=\theta_2(x)$, respectively. Consequently, $F(x,\theta)=0$ has two solutions for $\theta\in[-\pi,-\pi+\epsilon)$ for $\epsilon>0$ small enough. Geometrically, the lines $\theta=k$ intersect $\Gamma$ at two points for $k\in[-\pi,-\pi+\epsilon)$. Furthermore, for $\theta\in(-\pi,-\pi/2)$ we conclude from Eq. \eqref{eqpartialFtheta} that $\frac{\partial F}{\partial\theta}(x,\theta)>0$ hence $\theta_i(x)$ is never vertical. Consequently, $\Gamma$ can be expressed as a global vertical graph $\theta=\theta(x)$.

Next, we prove that there exists a unique $x_*\in(0,x_\lambda)$ such that $\theta'(x_*)=0$. This shows in particular that $\Gamma$ intersects the lines $\theta=\theta_0$ exactly twice for $\theta_0\in(-\pi,\theta_0)$, where $\theta_0=\theta(x_*)$. First, recall that $\theta'(x)=0$ if and only if
$$
0=\frac{\partial F}{\partial x}(x,\theta)=2\left(\lambda+\frac{\cos\theta}{(1+\tau^2x^2)^{3/2}}\right).
$$
Using that $F(x,\theta)=0$ we arrive to $\theta=\arctan\frac{2\tau^2x^3}{(1+\tau^2x^2)^{3/2}}$. Again, by $\pi$-periodicity of the $\tan$ function, the points of vanishing derivative of $\Gamma$ viewed as a graph $\theta=\theta(x)$ are located at the intersection with the graph of the function 
\begin{equation}\label{eqg}
g(x)=-\pi+\arctan\frac{2\tau^2x^3}{(1+\tau^2x^2)^{3/2}}.
\end{equation}
This function has vanishing derivative at $x=0$, and it is strictly increasing, hence its graph lies locally below $\theta(x)$ for $x>0$ small enough. By continuity, it intersects at least once the graph of $\theta(x)$ at some $(x_*,\theta(x_*))$, hence $\theta'(x_*)=0$. We compute the second derivative of $\theta$ by implicit derivation and use that $\theta'(x_*)=0$, obtaining
$$
\frac{\partial^2F}{\partial x^2}(x_*,\theta(x_*))+\frac{\partial F}{\partial\theta}(x_*,\theta(x_*))\theta''(x_*)=0.
$$
Since $\frac{\partial F}{\partial\theta}(x,\theta)>0$ for $\theta\in(-\pi,-\pi/2)$ and
$$
\frac{\partial^2F}{\partial x^2}(x,\theta)=-\frac{6x\tau^2\cos\theta}{(1+\tau^2x^2)^{5/2}}>0,
$$
we obtain $\theta''(x_*)<0$ and thus $\theta(x)$ has a local maximum at $x_*$. Therefore, the graphs of $\theta(x)$ and $g(x)$ can never intersect again and we have proved that $\theta(x)$ has a global maximum as unique critical point. We conclude that in the region $\theta\in(-\pi,-\pi/2)$, the curve $\Gamma$ is a vertical graph $\theta=\theta(x)$ that attains a global maximum and that joins the points $(0,-\pi)$ and $(x_\lambda,-\pi)$. By $2\pi$-periodicity, we can see this component in the region $(\pi,3\pi/2)$ and $\Gamma$ is a compact curve that joins the points $(0,0)$ and $(0,\pi)$. See Fig. \ref{fig:curveGamma}, right.
\end{proof}

\begin{defi}
We define the monotonicity regions of $\R$ as the connected components delimited by $\Gamma$ and $\theta=\pi/2+k\pi,\ k\in\mathbb{Z},$ where the components of any orbit are strictly monotonous functions.
\end{defi}

\subsection{The behavior of the orbits}\label{s42}

Once we have determined the structure of the phase plane and described its monotonicity regions, we focus on the motion and behavior of the orbits in it. The next result proves that an orbit cannot stay forever in a monotonicity region.

\begin{pro}
An orbit $\gamma(s)=(x(s),\theta(s))$ cannot stay in a monotonicity region with $x(s)\to\infty$ as $|s|\to\infty$.
\end{pro}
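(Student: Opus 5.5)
Argue by contradiction, mimicking the argument used for $\lambda$-grim reapers in Section \ref{s32}. Suppose $\gamma(s)=(x(s),\theta(s))$ stays in a monotonicity region for $s\geq s_0$ (the case $s\to-\infty$ is symmetric), with $x(s)\to\infty$. Since $x\to\infty$, $\gamma$ eventually lies outside the compact set bounded by $\Gamma$ (by the structure theorem on $\Gamma$, the curve is contained in $x\leq\max\{x(\theta_*),x_\lambda\}$). Hence eventually $\gamma$ lies in a region where $\theta'$ has a fixed sign. Because $x'\neq0$ is needed for $x\to\infty$ and $x$ is monotone there, the region is one of $\theta\in(-\pi/2,\pi/2)$ modulo $2\pi$, i.e.\ $\cos\theta>0$. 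Both coordinates are monotone and $\theta$ is bounded within the strip, so $\theta(s)\to\theta_0\in[-\pi/2,\pi/2]$.

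Write $\gamma$ as a vertical graph $\theta=\theta(x)$ for $x$ large, which is possible since $x'>0$. It satisfies
$$
\frac{d\theta}{dx}=\frac{\theta'(s)}{x'(s)}=\frac{1}{\cos\theta}\left(\frac{2\cos\theta}{\sqrt{1+\tau^2x^2}}+2\lambda-\frac{\sin\theta}{x}\right).
$$
Since $\theta(x)$ converges as $x\to\infty$, there is a sequence $x_n\to\infty$ with $\frac{d\theta}{dx}(x_n)\to0$. If $\theta_0\in(-\pi/2,\pi/2)$, then substituting gives $\frac{d\theta}{dx}(x_n)\to\frac{2\lambda}{\cos\theta_0}\neq0$, which is a contradiction for $\lambda\neq0$. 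When $\lambda>0$, which is implicit because the equilibrium $e_0=(1/(2\lambda),\pi/2)$ and $x_\lambda$ are defined, this limit is strictly positive.

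The main obstacle is the endpoint case $\theta_0=\pm\pi/2$, where $\cos\theta\to0$ and the quotient above degenerates. I would treat it directly in the parameter $s$. Near $\theta=\pi/2$ with $x$ large we have $\theta'(s)\approx\frac{1}{\tau x}\left(2\lambda-\frac{1}{x}\right)>0$, so $\theta$ increases. Hence $\theta\to\pi/2$ is only possible from below, and $\theta\to-\pi/2$ only from above. In the latter case, near $\theta=-\pi/2$ we also get $\theta'>0$, so $\theta$ is increasing and moving away from $-\pi/2$. This contradicts convergence to $-\pi/2$ from above with $\theta$ increasing, because then $\theta$ would be decreasing toward it.

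For $\theta\uparrow\pi/2$, use $\theta'\geq\frac{c}{x}$ for a constant $c>0$, together with $x'=\frac{\cos\theta}{\sqrt{1+\tau^2x^2}}\leq\frac{\cos\theta}{\tau x}$. Then
$$
\frac{d\theta}{dx}\geq\frac{c\,\tau}{\cos\theta},
$$
so $\cos\theta\,d\theta\geq c\tau\,dx$. Integrating, the bounded quantity $\sin\theta$ would grow at least linearly in $x$. This contradicts $x\to\infty$ and finishes the proof.
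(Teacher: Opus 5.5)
Your proof is correct and follows the paper's argument: write the end of the orbit as a graph $\theta=\theta(x)$ in a strip where $\cos\theta>0$, take $x_n\to\infty$ with $\frac{d\theta}{dx}(x_n)\to0$, and contradict the limit $2\lambda/\cos\theta_0$. Your separate treatment of $\theta_0=\pi/2$ covers a case the paper's formula glosses over, but it is easier than you make it: $0<\cos\theta\leq1$ together with a numerator tending to $2\lambda$ already forces $\liminf_{x\to\infty}\frac{d\theta}{dx}\geq2\lambda$, and your closing estimate $\frac{d}{dx}\sin\theta=\frac{2\cos\theta}{\sqrt{1+\tau^2x^2}}+2\lambda-\frac{\sin\theta}{x}\geq\lambda$ for large $x$ settles every $\theta_0$ at once, as well as the $s\to-\infty$ case with $\cos\theta<0$ (which is analogous rather than literally symmetric).
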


\begin{proof}
Arguing by contradiction, assume that $\gamma(s)=(x(s),\theta(s))$ is an orbit such that when $s\to\infty$, $\gamma(s)$ is strictly contained in some monotonicity region with $x(s)\to\infty$ (the case $s\to-\infty$ is proved similarly). In particular, $\theta(s)\neq\pi/2+k\pi$ for any $k\in\mathbb{Z}$; without losing generality, assume that $\gamma(s)\subset\{\theta\in(-\pi/2,\pi/2)\}$. By assumption,  $\gamma$ can be expressed as a vertical graph $\theta=\theta(x)$ for $x\geq x_0$ and some $x_0>0$. The inverse function theorem yields
$$
\theta'(x)=\frac{d\theta}{dx}=\dfrac{d\theta/ds}{dx/ds}=\frac{1}{\cos\theta(x)}\left(2\left(\frac{\cos\theta(x)}{\sqrt{1+\tau^2x^2}}+\lambda\right)-\frac{\sin\theta(x)}{x}\right),
$$
and in this region we have $\theta'(x)>0$ for any $x>x_0$. Since $\theta(x)$ is strictly increasing and bounded from above by $\theta=\pi/2$, there exists a sequence $x_n\to\infty$ for which $\theta(x_n)\to\theta_0\in(-\pi/2,\pi/2]$ and $\theta'(x_n)\to0$. Substituting above yields $\theta'(x_n)\to 2\lambda/\cos\theta_0\neq0$, which is a contradiction.
\end{proof}

Next, we prove the non-existence of closed orbits in $\R$, and its proof follows from Bendixson-Dulac theorem.

\begin{pro}\label{propnoclosedorbits}
There do not exist closed orbits in $\R$.
\end{pro}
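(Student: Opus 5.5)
The plan is to apply the Bendixson--Dulac criterion to the system \eqref{eqsystemrotational} on the half-plane $x>0$, viewed as a cylinder because of the $2\pi$-periodicity in $\theta$. Closed orbits are of two kinds. A \emph{contractible} closed orbit bounds a topological disk $\Omega\subset\R$. A \emph{non-contractible} closed orbit is one whose $\theta$-coordinate increases by a multiple of $2\pi$ after one period. I would treat the two separately.

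For a contractible closed orbit $\gamma$, index theory forces $\Omega$ to contain an equilibrium. Since $e_0=(1/(2\lambda),\pi/2)$ is the only one, $\gamma$ encircles $e_0$. I would then look for a Dulac weight $B(x,\theta)$ making $\mathrm{div}(BP,BQ)$ of one sign on the relevant region. Writing $w=\sqrt{1+\tau^2x^2}$, the natural first choice is $B=xw$, since it clears the singular factor $1/x$. Then
$$
BP=x\cos\theta,\qquad BQ=\frac{2x\cos\theta}{w}+2\lambda x-\sin\theta,
$$
and the $\cos\theta$ terms cancel, leaving
$$
\mathrm{div}(BP,BQ)=-\frac{2x\sin\theta}{w}.
$$
This is strictly negative wherever $\theta\in(0,\pi)$ and $x>0$. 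Green's theorem, $\oint_\gamma B(P\,d\theta-Q\,dx)=\iint_\Omega\mathrm{div}(BL)$, then gives a contradiction as soon as $\Omega\subset\{0<\theta<\pi\}$.

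The main obstacle is therefore to confine $\gamma$ to the strip $0<\theta<\pi$, or more precisely to show it never crosses $\theta=0$ or $\theta=\pi$. For this I would use the description of $\Gamma$ and of the monotonicity regions. On $\theta=0$ we have $\theta'=\frac{2}{w}(1/w+\lambda)>0$, so orbits cross this line only upwards. An orbit encircling $e_0$ that dips below $\theta=0$ would have to recross it downwards, which is impossible. On $\theta=\pi$ we have $\theta'=\frac{2}{w}(\lambda-1/w)$. For $\lambda\ge1$ this is positive, so orbits cross only upwards; the same argument then prevents a closed orbit from reaching $\{\theta>\pi\}$ and returning. For $\lambda<1$ this sign changes at $x=x_\lambda$, and I expect a finer argument to be needed. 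I would first try to follow a hypothetical orbit through the monotonicity regions delimited by $\Gamma$ and the lines $\theta=\pi/2+k\pi$, to show it cannot close up after passing above $\theta=\pi$. Failing that, I would modify the weight, for instance $B=xw\,e^{\mu(\theta)}$, to absorb the sign change near $\theta=\pi$.

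For a non-contractible closed orbit, I would integrate the identity for $(xz')'$ from the proof of Prop. \ref{propintersectionorthogonal} over one period. The terms $x\sin\theta$ and the antiderivative of $2\lambda xx'/w$ are periodic, so their contributions vanish, giving
$$
\oint\Big(x'z'\big(1-\tfrac1w\big)+\tfrac{2xx'^2}{w}\Big)\,ds=0.
$$
Combining this with Green's theorem applied to the region between $\gamma$ and $\gamma+(0,2\pi)$, where $\iint\mathrm{div}$ is again computable from the formula above, should yield a contradiction. Alternatively, I would use that such an orbit must cross $\theta=\pi/2$ and $\theta=3\pi/2$ in the directions prescribed by the monotonicity regions, which I expect to be incompatible with periodicity.
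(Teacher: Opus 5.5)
Your approach is the paper's: the same weight $x\sqrt{1+\tau^2x^2}$ and the same divergence $-2x\sin\theta/\sqrt{1+\tau^2x^2}$. The paper simply asserts that this divergence is negative on all of $\R$. You are right that it is negative only where $\sin\theta>0$, so a confinement step is needed. Your argument via the sign of $\theta'$ on $\theta=0$ and $\theta=\pi$ supplies it for $\lambda\geq1$.

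For $\lambda<1$, however, the gap is genuine, and the remedies you suggest do not close it. On $\theta=\pi$ one has $\theta'<0$ for $x<x_\lambda$ and $\theta'>0$ for $x>x_\lambda$. Consider a counterclockwise loop around $e_0$ that crosses $\theta=\pi$ upwards at some $x>x_\lambda$, reaches its maximal $\theta$ on the component of $\Gamma$ in $(\pi,3\pi/2)$, and comes back down across $\theta=\pi$ at some $x<x_\lambda$. Such a loop is compatible with every monotonicity region, so tracking those regions cannot exclude it. On the part of $\Omega$ above $\theta=\pi$ the divergence is positive, so Green's theorem yields nothing, and you never exhibit a $\mu(\theta)$ that fixes the sign. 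In the non-contractible case, the integrand you obtain, $x'z'(1-1/\sqrt{1+\tau^2x^2})+2xx'^2/\sqrt{1+\tau^2x^2}$, has no sign, so ``should yield a contradiction'' is not an argument. (Since $\R$ is the half-plane $x>0$, the statement only concerns contractible orbits. Orbits closing up modulo $2\pi$ are treated separately in the torus part of Theorem \ref{thmnonexistenceclosed}.)

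The missing idea is hidden in the identity you planned to integrate. The computation in the proof of Proposition \ref{propintersectionorthogonal} drops a factor: $z''=\cos\theta\,\theta'=\sqrt{1+\tau^2x^2}\,x'\theta'$, not $x'\theta'$. With this factor restored, the sign-indefinite term disappears and one gets exactly
\[
\frac{d}{ds}\bigl(x\sin\theta-\lambda x^2\bigr)=2x\,x'^2=\frac{2x\cos^2\theta}{1+\tau^2x^2}\geq0 .
\]
Thus $\Phi=x\sin\theta-\lambda x^2$, which is $2\pi$-periodic in $\theta$, is a Lyapunov function for \eqref{eqsystemrotational}. Take any orbit with $x(s+T)=x(s)$ and $\theta(s+T)\equiv\theta(s)$ modulo $2\pi$. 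Integrating over one period gives $\int_0^T x\cos^2\theta/(1+\tau^2x^2)\,ds=0$. Hence $\cos\theta\equiv0$, so $\theta$ and $x$ are constant, and the orbit is an equilibrium. This proves the proposition for every $\lambda$ and covers your non-contractible case as well. It makes both the Dulac weight and the confinement step unnecessary.
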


\begin{proof}
Let us express Eq. \eqref{eqsystemrotational} as $(x,\theta)'=(P(x,\theta),Q(x,\theta))$ and define on $\R$ the vector field $V(x,\theta)=x\sqrt{1+\tau^2x^2}(P(x,\theta),Q(x,\theta))$. Recall that if $\gamma$ is an orbit, then $V(\gamma)$ is tangent to $\gamma$ at each point. A simple computation shows that
$$
\mathrm{div}V=-\frac{2x\sin\theta}{\sqrt{1+\tau^2x^2}}<0.
$$
Assume by contradiction that $\gamma$ is a closed orbit in $\R$, name $\Omega$ to its inner region and let $\textbf{n}_\gamma$ be its unit normal pointing towards $\Omega$. The divergence theorem yields
$$
0>\int_\Omega\mathrm{div}V=-\int_\gamma\langle\textbf{n}_\gamma,V\rangle=0.
$$
This contradiction proves the result.
\end{proof}

The non-existence of closed orbits will be exploited with the consequences of Poincaré-Bendixson theorem, a powerful result that gives sufficient conditions for the existence of closed orbits in the phase plane of an autonomous system. We announce it next for the reader's convenience.

\begin{teo}[Poincaré-Bendixson]
Let $K$ be a compact region of the phase plane of a non-linear autonomous system. Assume that $K$ contains finitely-many equilibrium points and let $\gamma(s)$ be an orbit for which there exists $s_0\in\r$ such that $\gamma(s)\in K,\ \forall s\geq s_0$. Then, either 
\begin{enumerate}
\item $\gamma(s)$ converges to an equilibrium as $s\to\infty$;
\item $\gamma$ is itself a closed orbit; 
\item or $\gamma$ spirals towards a closed orbit as $s\to\infty$. 
\end{enumerate}
\end{teo}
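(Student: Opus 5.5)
The plan is the classical $\omega$-limit set argument. Let $\omega(\gamma)$ be the set of limits of sequences $\gamma(s_n)$ with $s_n\to\infty$. Since $\gamma(s)\in K$ for $s\geq s_0$ and $K$ is compact, I will first record the standard facts:
\begin{enumerate}
\item $\omega(\gamma)$ is nonempty, compact and connected.
\item It is contained in $K$.
\item It is invariant under the flow, so the full orbit through any of its points stays in $\omega(\gamma)\subset K$ and is defined for all times.
\item $\mathrm{dist}(\gamma(s),\omega(\gamma))\to0$ as $s\to\infty$.
\end{enumerate}
These follow from continuity of the flow and a diagonal argument. The rest of the proof is a case analysis on how many equilibria $\omega(\gamma)$ contains, all of which lie among the finitely many equilibria in $K$.

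The key tool is the \emph{transversal segment lemma}. Take a regular point $p$ and a short segment $\Sigma$ through $p$ transversal to the vector field $(P,Q)$. Any orbit meets $\Sigma$ at times that are isolated, and its successive intersection points are monotone along $\Sigma$. The proof uses the Jordan curve theorem: the arc of the orbit between two consecutive hits, closed up by the piece of $\Sigma$ between them, bounds a positively or negatively invariant region. Two corollaries follow:
\begin{enumerate}
\item $\omega(\gamma)$ meets $\Sigma$ in at most one point.
\item If $\omega(\gamma)$ contains a periodic orbit $\beta$, then $\omega(\gamma)=\beta$.
\end{enumerate}
The second uses connectedness together with the first, applied at a point where a nonperiodic part would accumulate on $\beta$.

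The cases are as follows.

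\textbf{Case (a): $\omega(\gamma)$ contains no equilibrium.} Take $q\in\omega(\gamma)$. The limit set $\omega(q)\subset\omega(\gamma)$ contains a regular point $r$. Putting a transversal at $r$, the orbit of $q$ meets it in at most one point but returns to it infinitely often, so it meets it repeatedly at that single point. Hence the orbit of $q$ is a closed orbit $\beta$, and by the second corollary $\omega(\gamma)=\beta$. If $\gamma=\beta$ this is alternative 2. Otherwise monotonicity of the hits on a transversal at a point of $\beta$ shows that $\gamma$ accumulates on $\beta$ from one side only, winding around it; this is alternative 3.

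\textbf{Case (b): $\omega(\gamma)$ consists only of equilibria.} Since there are finitely many, $\omega(\gamma)$ is finite, and being connected it is a single equilibrium $e$. Then $\gamma(s)\to e$, which is alternative 1.

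\textbf{Case (c): $\omega(\gamma)$ contains both equilibria and regular points.} This is the step I expect to be the main obstacle. For a regular point $q\in\omega(\gamma)$, the sets $\omega(q)$ and $\alpha(q)$ cannot contain regular points: such a point would give repeated hits on a transversal, contradicting the first corollary. So each orbit in $\omega(\gamma)$ connects two of the finitely many equilibria. Using the monotone order of returns of $\gamma$ on transversals placed at these connecting orbits, I will show that $\omega(\gamma)$ is a closed invariant curve made of equilibria and connecting orbits, traversed cyclically, which $\gamma$ approaches by winding around it. I read this, as the statement does, as $\gamma$ "spiralling towards a closed orbit," in the sense of a closed invariant curve of the flow. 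This is alternative 3 in its degenerate form.

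Finally I will check that the three alternatives exhaust the possibilities. This reduces to the observation that $\omega(\gamma)\neq\emptyset$ and that the cases above cover every configuration of equilibria and regular points inside it.
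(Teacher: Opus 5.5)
Your cases (a) and (b) are the standard $\omega$-limit-set argument and are fine. The paper itself gives no proof of this statement; it simply quotes the classical theorem.

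The genuine gap is case (c), and it cannot be filled, because in that case the statement as printed is false. It omits the third alternative of the general Poincar\'e--Bendixson theorem: $\omega(\gamma)$ may consist of finitely many equilibria together with regular orbits whose $\alpha$- and $\omega$-limits are among those equilibria. Here is a counterexample. Let $H=\tfrac{y^2}{2}-\tfrac{x^2}{2}+\tfrac{x^3}{3}$ and take $x'=y-H\,(x^2-x)$, $y'=x-x^2-H\,y$. Then:
\begin{itemize}
\item along orbits, $\frac{d}{ds}H=-H\,|\nabla H|^2$;
\item the only equilibria are the saddle $(0,0)$ and the source $(1,0)$;
\item the homoclinic loop $\{H=0,\ 0\le x\le 3/2\}$ is invariant;
\item every orbit starting inside the loop, other than $(1,0)$, stays in the compact region $K$ bounded by the loop, has $H$ increasing strictly to $0$, and winds out onto the loop.
\end{itemize}
Such an orbit does not converge to an equilibrium and is not closed, and $K$ contains no closed orbit at all.

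So what you do in (c) is not a proof step but a change of statement. ``Closed orbit'' means a periodic orbit, and a cycle of connecting orbits through equilibria is not one. This is also how the paper uses the term, pairing the theorem with Prop.~\ref{propnoclosedorbits}, so the statement does not read it your way. Even granting your reinterpretation, the claim that $\omega(\gamma)$ is a closed curve traversed cyclically is only announced. What your transversal argument actually yields is that $\omega(\gamma)$ is a union of finitely many equilibria and (a priori countably many) connecting orbits. Organizing these into a single cyclically traversed curve needs a further argument that you do not supply.

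The correct repair is either to add this third alternative to the statement or to exclude it under extra hypotheses. The second option is what the paper actually needs in Cor.~\ref{corcomportamientooribtasPB}. There the only equilibrium is $e_0$, a sink (stable node or focus), and no nonconstant orbit can have a sink as its $\alpha$-limit. Hence a regular point $q\in\omega(\gamma)$ with $\alpha(q)=\{e_0\}$ cannot exist, and case (c) is empty. If you use that observation instead of your reinterpretation, your case analysis gives exactly the version of the theorem invoked in the corollary: $\omega(\gamma)$ is either $e_0$ or a periodic orbit.
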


In our setting and in virtue of Prop. \ref{propnoclosedorbits} and the existence of a single equilibrium $e_0$, we conclude the following.

\begin{cor}\label{corcomportamientooribtasPB}
Let be $K\subset\R$ a compact set and $\gamma$ a solution of \eqref{eqsystemrotational}. Let $s_0\in\r$ such that $\gamma(s_0)\in K$.
\begin{enumerate}
\item If $\gamma(s)\in K$ for every $s\geq s_0$, then $e_0\in K$ and $\gamma(s)\to e_0$ as $s\to\infty$.
\item If $e_0\notin K$, there exists $s_+>s_0$ such that $\gamma(s_+)\notin K$.
\item Regardless of whether $K$ contains $e_0$ or not, there exists $s_-<s_0$ such that $\gamma(s_-)\notin K$.
\end{enumerate}
\end{cor}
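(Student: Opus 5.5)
The corollary follows from the Poincaré--Bendixson theorem combined with Prop. \ref{propnoclosedorbits} and the fact that $e_0$ is the unique equilibrium of \eqref{eqsystemrotational}. The plan is to prove the three items in order, each by contradiction, ruling out the alternatives of Poincaré--Bendixson one by one.

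For item 1, suppose $\gamma(s)\in K$ for all $s\geq s_0$. Since $K\subset\R$ is compact, it contains at most the single equilibrium $e_0$, so Poincaré--Bendixson applies. It leaves three possibilities.
\begin{itemize}
\item If $\gamma$ were itself closed, this would contradict Prop. \ref{propnoclosedorbits}.
\item If $\gamma$ spiraled towards a closed orbit, that limit cycle would be a closed orbit of $\R$, again contradicting Prop. \ref{propnoclosedorbits}.
\item The remaining option is that $\gamma(s)$ converges to an equilibrium, which must be $e_0$. Since $K$ is closed and $\gamma(s)\in K$ for $s\geq s_0$, the limit lies in $K$, so $e_0\in K$.
\end{itemize}
Item 2 is the contrapositive of item 1: if $e_0\notin K$, the orbit cannot remain in $K$ for all $s\geq s_0$.

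For item 3, apply the same reasoning in backward time. The reversed system $(x,\theta)'=-(P,Q)$ is still autonomous, with the same orbits traversed in the opposite direction. Hence it has the same unique equilibrium and no closed orbits. If $\gamma(s)\in K$ for all $s\leq s_0$, Poincaré--Bendixson for the reversed flow forces $\gamma(s)\to e_0$ as $s\to-\infty$. This has to be excluded.

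Backward convergence to $e_0$ is the main obstacle. Every orbit tending to $e_0$ must lie in its stable manifold for the reversed flow, i.e.\ the unstable manifold for the original flow. The linearization $JL(e_0)$ has eigenvalues with strictly negative real parts, so $e_0$ is a hyperbolic sink (a stable node or focus). By Hartman--Grobman, the only orbit converging to $e_0$ as $s\to-\infty$ is the constant orbit $e_0$ itself. This holds regardless of whether the eigenvalues are real or complex.

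Therefore, if $\gamma$ is not the equilibrium, it must leave $K$ at some $s_-<s_0$. The constant solution $e_0$ has to be understood as excluded, or the statement read for non-stationary orbits. This also covers the case $e_0\in K$, which is why item 3 holds regardless of whether $K$ contains $e_0$.
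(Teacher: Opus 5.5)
Your proposal is correct and follows the paper's own argument. Items 1--2 come from Poincaré--Bendixson together with Prop.~\ref{propnoclosedorbits}, and item 3 from the backward-time version plus the stable node/spiral structure of $e_0$, which prevents any non-constant orbit from tending to it as $s\to-\infty$; your remark that the stationary solution $\gamma\equiv e_0$ must be excluded in item 3 (when $e_0\in K$) is a valid minor correction that the paper leaves implicit.
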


\begin{proof}
The first two items are clear from Poincaré-Bendixson theorem in our setting. For the last one, if $\gamma(s)\in K$ for every $s<0$, then the non-existence of closed orbits yields that $\gamma(s)\to e_0$ as $s\to-\infty$, contradicting the stable node or spiral structure of $e_0$.
\end{proof}
We finish this section by announcing a technical result that will be useful in the sequel and that compares the $x$-coordinate of the endpoints of a compact orbit in a certain region of $\R$.

\begin{pro}\label{propextremosorbita}
Let $\gamma$ be an orbit in the region $\theta\in[0,\pi]$  with endpoints $(x_1,0)$ and $(x_2,\pi)$. Then, $x_2<x_1$.
\end{pro}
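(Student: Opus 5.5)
The plan is to find a quantity that is monotone along every orbit of \eqref{eqsystemrotational}, in the spirit of the identity for $(xz')'$ used in the proof of Prop. \ref{propintersectionorthogonal}, and then simply compare its values at the two endpoints. This avoids the comparison with the CMC orbit that was used for $\lambda$-grim reapers. Throughout I assume $\lambda>0$, as is implicit in the existence of the equilibrium $e_0=(1/(2\lambda),\pi/2)$.

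\emph{Step 1 (the monotone quantity).} Write $W=\sqrt{1+\tau^2x^2}$, so that $x'=\cos\theta/W$ and $\cos\theta/W=x'$. Differentiating $x\sin\theta$ along an orbit and substituting the equation for $\theta'$ from \eqref{eqsystemrotational} gives
$$
(x\sin\theta)'=x'\sin\theta+x\cos\theta\,\theta'=x'\sin\theta+xx'\left(2(x'+\lambda)-\frac{\sin\theta}{x}\right)=2xx'^2+2\lambda xx'.
$$
It follows that
$$
\left(x\sin\theta-\lambda x^2\right)'=2x\,x'^2\geq0 .
$$
So $\Phi(x,\theta)=x\sin\theta-\lambda x^2$ is non-decreasing along every orbit. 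Moreover $x'$ vanishes only where $\cos\theta=0$. An orbit crosses the line $\theta=\pi/2$ transversally away from $e_0$, and $e_0$ does not lie on an orbit joining $\theta=0$ with $\theta=\pi$. Hence $x'$ vanishes only at isolated parameters, and $\Phi$ is strictly increasing along $\gamma$.

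\emph{Step 2 (orientation of $\gamma$).} At a point $(x_1,0)$ we have $\theta'=\frac{2}{W}\left(\frac1W+\lambda\right)>0$. Since $\gamma$ lies in $\theta\in[0,\pi]$, it must leave the line $\theta=0$ into the strip as $s$ increases, so $(x_1,0)$ is the initial point of $\gamma$. Therefore $\gamma$ travels from $(x_1,0)$ at some $s=a$ to $(x_2,\pi)$ at some $s=b>a$. I will also check that $\gamma$ cannot touch $\theta=0$ or $\theta=\pi$ at interior times, although this is not actually needed, since the monotonicity of $\Phi$ holds on the whole interval $[a,b]$.

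\emph{Step 3 (conclusion).} Evaluating at the endpoints gives $\Phi(x_1,0)=-\lambda x_1^2$ and $\Phi(x_2,\pi)=-\lambda x_2^2$. Strict monotonicity on $[a,b]$ then yields $-\lambda x_2^2>-\lambda x_1^2$. With $\lambda>0$ and $x_1,x_2>0$, this gives $x_2<x_1$.

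\emph{Main obstacle.} The only delicate point is the strictness in Step 1 together with the orientation argument in Step 2. I expect the identity itself to be routine.
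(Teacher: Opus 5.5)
Your proof is correct, and it takes a genuinely different route from the paper. The identity checks out directly: for $\Phi(x,\theta)=x\sin\theta-\lambda x^2$ one gets $\Phi_x x'+\Phi_\theta\theta'=2x\cos^2\theta/(1+\tau^2x^2)=2xx'^2$. It is also the correct form of the $(xz')'$ computation in Prop.~\ref{propintersectionorthogonal}. There a factor $\sqrt{1+\tau^2x^2}$ was dropped from $z''$, so you are not simply reusing that formula.

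The paper argues differently. It writes $\gamma$ as a horizontal graph $x(\theta)$, splits it at $\theta=\pi/2$, and reflects the lower piece across $\theta=\pi/2$. It then compares slopes at a hypothetical last crossing of the two graphs, and separately at $\theta=\pi$ for the case $x_1=x_2$. This is the standard local barrier technique of the phase-plane method and does not depend on finding a special quantity. But it needs $\theta$ to be monotone along $\gamma$, i.e.\ $\gamma$ must stay to the right of $\Gamma$. That is why the paper first has to reduce to the case $x_2>x_\lambda$.

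Your monotone quantity removes all of this. There is no graph representation, no case split on $\lambda\geq1$ versus $\lambda<1$, and no concern about whether $\gamma$ meets $\Gamma$. Strictness is even easier than you make it: $\Phi(b)-\Phi(a)=\int_a^b 2xx'^2\,ds>0$, because the integrand is positive at $s=a$, where $x'=1/\sqrt{1+\tau^2x_1^2}$.

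The same function also gives several other results at once:
\begin{itemize}
\item the analog on $[-\pi,0]$ and hence Cor.~\ref{cororbitadescapa};
\item the absence of closed orbits (Prop.~\ref{propnoclosedorbits});
\item both cases of Thm.~\ref{thmnonexistenceclosed}, since $\Phi$ is $2\pi$-periodic in $\theta$ and tends to $0$ at the axis.
\end{itemize}

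Your explicit assumption $\lambda>0$ is genuinely needed in Step 3, and it is the paper's tacit setting.
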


\begin{proof}
We can assume without losing generality that $x_2>x_\lambda$, where $x_\lambda$ was defined in Eq. \eqref{eqxlambda}, since on the contrary $\gamma$ must intersect the curve $\Gamma$ and then intersect $\theta=\pi$ at some $(\hat{x_2},\pi)$ with $\hat{x_2}>x_2$. In such a case, we conclude $x_1>\hat{x_2}>x_2$.

We know that $\gamma$ can be expressed as a horizontal graph $x=x(\theta)$ with $x:[0,\pi]\to\r$ such that $x(0)=x_1$ and $x(\pi)=x_2$. Then,
\begin{equation}\label{eqderivadax}
x'(\theta)=\dfrac{\cos\theta}{2\left(\dfrac{\cos\theta}{\sqrt{1+\tau^2x(\theta)^2}}+\lambda\right)-\dfrac{\sin\theta}{x(\theta)}}.
\end{equation}
We decompose the function $x$ as
$$
\textbf{x}_1:[0,\pi/2]\to\r,\qquad \textbf{x}_2:[\pi/2,\pi]\to\r,
$$
and define $\hat{\textbf{x}_1}(\theta)=\textbf{x}_1(\pi-\theta)$. Recall that the graph $(\hat{\textbf{x}_1}(\theta),\theta)$ agrees with the one of $\textbf{x}_1$ up to a reflection about the line $\theta=\pi/2$.
Let $\epsilon>0$ and define $\theta_\epsilon=\pi/2+\epsilon$. Then,
$$
\hat{\textbf{x}_1}'(\theta_\epsilon)=\dfrac{-\sin\epsilon}{2\left(\dfrac{\sin\epsilon}{\sqrt{1+\tau^2\hat{\textbf{x}_1}(\theta_\epsilon)}}+\lambda\right)-\dfrac{\cos\epsilon}{\hat{\textbf{x}_1}(\theta_\epsilon)}}, \quad \textbf{x}_2'(\theta_\epsilon)=\dfrac{-\sin\epsilon}{2\left(\dfrac{-\sin\epsilon}{\sqrt{1+\tau^2\textbf{x}_2(\theta_\epsilon)}}+\lambda\right)-\dfrac{\cos\epsilon}{\textbf{x}_2(\theta_\epsilon)}}.
$$
For $\epsilon\to0$ we know that $\hat{\textbf{x}_1}(\theta_\epsilon)$ and $\textbf{x}_2(\theta_\epsilon)$ both converge to the value $x(\pi/2)$, hence for $\epsilon$ small enough we have $\hat{\textbf{x}_1}'(\theta_\epsilon)>\textbf{x}_2'(\theta_\epsilon)$, which yields that the graph of $\hat{\textbf{x}_1}$ is locally above the graph of $\textbf{x}_2$ for $\epsilon$ small enough.

Now, we argue by contradiction by assuming $x_2\geq x_1$. If $x_2>x_1$ this means that there exists a largest $\theta_*\in(\pi/2,\pi)$ such that $\hat{\textbf{x}_1}(\theta_*)=\textbf{x}_2(\theta_*)=x_*$ and the graph of $\hat{\textbf{x}_1}$ is transverse to the graph of $\textbf{x}_2$ in such a way that $\hat{\textbf{x}_1}(\theta)<\textbf{x}_2(\theta)$ for every $\theta\in(\theta_*,\pi)$. In particular, $\hat{\textbf{x}_1}'(\theta_*)<\textbf{x}_2'(\theta_*)$. This is however a contradiction with Eq. \eqref{eqderivadax}, since
$$
\hat{\textbf{x}_1}'(\theta_*)=\dfrac{\cos\theta_*}{2\left(\dfrac{-\cos\theta_*}{\sqrt{1+\tau^2x_*^2}}+\lambda\right)-\dfrac{\sin\theta_*}{x_*}}>\dfrac{\cos\theta_*}{2\left(\dfrac{\cos\theta_*}{\sqrt{1+\tau^2x_*^2}}+\lambda\right)-\dfrac{\sin\theta_*}{x_*}}=\textbf{x}_2'(\theta_*),
$$
since $\cos\theta_*<0$ and both denominators are positive since $\gamma$ lies at the right-hand side of $\Gamma$.

If $x_2=x_1$ then we know that necessarily $\hat{\textbf{x}_1}(\theta)>\textbf{x}_2(\theta)$ for every $\theta\in[\pi/2,\pi)$ until reaching the instant $\theta=\pi$ where they agree. Consequently, a comparison argument yields $\hat{\textbf{x}_1}'(\pi)\leq\textbf{x}_2'(\pi)$, but at $\theta=\pi$ we have
$$
\hat{\textbf{x}_1}'(\pi)=\dfrac{-1}{\dfrac{2}{\sqrt{1+\tau^2x_1^2}}+\lambda}>\dfrac{-1}{\dfrac{-2}{\sqrt{1+\tau^2x_1^2}}+\lambda}=\textbf{x}_2'(\pi).
$$
\end{proof}

A similar argument allows us to prove the analog in the region $\theta\in[-\pi,0]$. Combining these facts, we conclude the following.

\begin{cor}\label{cororbitadescapa}
Let $\gamma$ be an orbit in the region $\theta\in[-\pi,\pi]$ with endpoints $(x_0,-\pi)$ and $(x_2,\pi)$, and that passes through $(x_1,0)$. Then, $x_2<x_1<x_0$.
\end{cor}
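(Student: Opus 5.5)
Split $\gamma$ at its crossing of $\theta=0$ into two arcs, $\gamma_+$ in $\theta\in[0,\pi]$ and $\gamma_-$ in $\theta\in[-\pi,0]$. The inequality $x_2<x_1$ is then exactly Prop. \ref{propextremosorbita} applied to $\gamma_+$, which runs from $(x_1,0)$ to $(x_2,\pi)$. So it only remains to prove $x_1<x_0$ for $\gamma_-$, which joins $(x_0,-\pi)$ to $(x_1,0)$. I would do this by repeating the proof of Prop. \ref{propextremosorbita} with the roles of $\cos\theta$ and $\sin\theta$ adapted to the lower strip.

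Concretely, on $\theta\in[-\pi,0]$ write $\gamma_-$ as a horizontal graph $x=x(\theta)$. This is possible away from $\Gamma$, where $\theta'\neq0$. Its derivative is given by the same formula \eqref{eqderivadax}.

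There is a preliminary reduction when $\lambda<1$. If $\gamma_-$ meets the component of $\Gamma$ lying in $(-\pi,-\pi/2)$, then, just as in the first paragraph of the proof of Prop. \ref{propextremosorbita}, I would reduce to the case $x_0>x_\lambda$. In that case the orbit stays to the right of $\Gamma$ and the denominator of \eqref{eqderivadax} keeps a fixed sign. For $\theta\in(-\pi,0)$ we have $-\sin\theta/x>0$, so the denominator is positive whenever $\lambda>0$ and the point lies to the right of $\Gamma$.

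Split $x$ into $\mathbf{x}_1$ on $[-\pi,-\pi/2]$ and $\mathbf{x}_2$ on $[-\pi/2,0]$. Reflect one piece about $\theta=-\pi/2$, setting $\hat{\mathbf{x}}_2(\theta)=\mathbf{x}_2(-\pi-\theta)$. Under this reflection $\cos\theta$ changes sign and $\sin\theta$ is unchanged.

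Near $\theta=-\pi/2$ both pieces pass through the same point, and a first-order comparison of their slopes, exactly as with $\theta_\epsilon$ in the proof of Prop. \ref{propextremosorbita}, gives their local ordering. Now suppose $x_1\ge x_0$. Then the reflected graph and the original must cross at a last point $\theta_*\in(-\pi,-\pi/2)$, or touch at $\theta=-\pi$. At that point the two slopes are
$$\frac{\pm\cos\theta_*}{2\left(\pm\frac{\cos\theta_*}{\sqrt{1+\tau^2x_*^2}}+\lambda\right)-\frac{\sin\theta_*}{x_*}}.$$
Since $\cos\theta_*<0$ and both denominators are positive, this yields the strict inequality opposite to the one transversality demands, which is the contradiction. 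The endpoint case $x_0=x_1$ is handled by evaluating the slopes at $\theta=-\pi$, as in the last display of the proof of Prop. \ref{propextremosorbita}.

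The main obstacle is keeping track of orientation and signs. In the lower strip $x$ increases on $(-\pi/2,0)$ and decreases on $(-\pi,-\pi/2)$, so the inequality being propagated is reversed with respect to the upper strip. I need to verify that both the local ordering near $-\pi/2$ and the ordering at the crossing point come out consistent with the claimed conclusion $x_1<x_0$. I would also check the denominators carefully for $\lambda\le 0$.

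Alternatively, the anti-symmetry $\theta\mapsto-\theta$ combined with $s\mapsto-s$ could transfer Prop. \ref{propextremosorbita} directly. However, the term $\sin\theta/x$ spoils an exact symmetry, so I would only use it as a heuristic guide and do the direct computation.
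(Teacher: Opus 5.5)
Your proposal is correct and follows the paper's own route: the paper proves the corollary by applying Prop.~\ref{propextremosorbita} on $[0,\pi]$ and asserting that a similar reflection argument works on $[-\pi,0]$, and the sign checks you left open do come out right, since for $\theta$ slightly below $-\pi/2$ one gets $\hat{\mathbf{x}}_2<\mathbf{x}_1$ (the reflected piece now lies to the left, the opposite of the upper strip) while $\hat{\mathbf{x}}_2'(\theta_*)>\mathbf{x}_1'(\theta_*)$ at every contact point, so the graphs never cross and $x_1=\hat{\mathbf{x}}_2(-\pi)<\mathbf{x}_1(-\pi)=x_0$. One correction to your display: the reflected slope is $\cos\theta_*\big/\bigl(2(-\cos\theta_*/\sqrt{1+\tau^2x_*^2}+\lambda)-\sin\theta_*/x_*\bigr)$, with the same numerator $\cos\theta_*$ as $\mathbf{x}_1'(\theta_*)$ because the chain-rule sign cancels the sign change of the cosine, so the strict inequality comes from the reflected denominator exceeding the original one by $-4\cos\theta_*/\sqrt{1+\tau^2x_*^2}>0$ with both positive, exactly as in the paper (the positivity of the original denominator rests on the same ``orbit stays to the right of $\Gamma$'' assumption the paper uses).
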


\subsection{Non-existence of rotational closed $\lambda$-translators}\label{s43}

In this section we prove the non-existence of rotational closed $\lambda$-translators, as stated next.

\begin{teo}\label{thmnonexistenceclosed}
There do not exist closed rotational $\lambda$-translators.
\end{teo}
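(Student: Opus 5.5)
A closed rotational $\lambda$-translator is a compact surface without boundary invariant under rotations. Its profile curve $\alpha(s)=(x(s),0,z(s))$ must therefore be either a closed curve in $\{x>0\}$, giving a torus, or an arc with both endpoints on the rotation axis, giving a sphere. I will rule out both cases using the phase plane $\R$ of \eqref{eqsystemrotational}.

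\emph{Torus case.} Here $\alpha$ is periodic, so $x(s)$ is periodic and $\theta(s)$ is periodic modulo $2\pi$. Hence the orbit $\gamma=(x,\theta)$, viewed in the cylinder $\R/2\pi\mathbb{Z}$, is either an equilibrium or a closed orbit. The only equilibrium is $e_0$, which corresponds to the non-compact cylinder $\mathcal{C}_\lambda$, so it is excluded.

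If the closed orbit is contractible in the cylinder, it lifts to a closed orbit of $\R$. This contradicts Prop. \ref{propnoclosedorbits}.

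Otherwise, the total rotation of $\theta$ along one period is $2k\pi$ with $k\neq0$. The orbit then crosses every line $\theta=-\pi+2m\pi$, $\theta=2m\pi$, $\theta=\pi+2m\pi$, and returns to the same $x$ after $\theta$ advances by $2\pi$.
\begin{itemize}
\item If $k>0$, a segment of the orbit goes from $(x_0,-\pi)$ through $(x_1,0)$ to $(x_2,\pi)$. Iterating Cor. \ref{cororbitadescapa} shows that the $x$-values at the crossings of $\theta=(2m+1)\pi$ are strictly decreasing, contradicting periodicity. These segments stay in the strip $[-\pi,\pi]$ up to the $\Gamma$ subtleties already handled inside Prop. \ref{propextremosorbita}.
\item If $k<0$, the traversal is reversed and the same corollary gives strict increase, again contradicting periodicity.
\end{itemize}
An alternative for the non-contractible case is Bendixson--Dulac applied to the annulus between the orbit and a translate $\gamma+(0,2\pi)$. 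However, the divergence $-2x\sin\theta/\sqrt{1+\tau^2x^2}$ is not of one sign, so I prefer the endpoint-comparison argument.

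\emph{Sphere case.} Here both endpoints of $\alpha$ lie on the axis, and by Prop. \ref{propintersectionorthogonal} the orbit has endpoints of the form $(0,k\pi)$. By uniqueness (Def. \ref{defiM+-}), each end of the orbit is the orbit of $M_+$ or of $M_-$. Closedness requires the outward normal at the top pole to be $\partial_z$ and at the bottom pole to be $-\partial_z$, or the reverse for the inner normal. So the single orbit must leave $(0,0)$ and arrive at $(0,\pi)$ up to $2\pi$-shifts, i.e. one orbit would connect the two axis points.

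To exclude this, I will follow the orbit leaving $(0,0)$, which corresponds to $M_+$. Near $x=0$ it lies in $\theta\in(0,\pi/2)$ above $\Gamma$, where $x$ increases and $\theta'$ has a fixed sign. Using the monotonicity regions and Cor. \ref{corcomportamientooribtasPB}, I expect to show that it enters a compact trapping region around $e_0$ and converges to $e_0$. In that case it never reaches $x=0$ again.

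The main obstacle is proving that the $M_+$ orbit cannot instead wrap around and hit $(0,\pi)$. I would try a trapping argument: construct a compact region $K$ containing $e_0$, bounded by pieces of $\Gamma$, the lines $\theta=\pi/2$ and $\theta=\pi$, and a segment of the orbit itself, and check that $K$ is positively invariant. Prop. \ref{propextremosorbita} is the tool for that check, since it shows an orbit crossing the strip $[0,\pi]$ ends at a smaller $x$, so it cannot escape across $\theta=\pi$ towards $x=0$ without first returning.

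If that fails, a fallback is an integral identity for a closed surface. Integrating $H$ against a Killing field gives a flux/Minkowski-type formula with $\int_M(\langle N,\partial_z\rangle+\lambda)\langle N,X\rangle=0$ for Killing $X$. Taking $X=\partial_z$ gives $\int_M\langle N,\partial_z\rangle^2=-\lambda\int_M\langle N,\partial_z\rangle=0$, since the second integral vanishes by the divergence theorem. This would force $\langle N,\partial_z\rangle\equiv0$, which is impossible on a closed surface. This actually handles both cases at once and may be the cleanest proof, provided the first-variation formula $\int_M H\langle N,X\rangle=0$ for Killing $X$ on closed $M$ is justified.
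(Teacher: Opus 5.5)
Your torus case follows the paper's route: a periodic orbit, then Cor.~\ref{cororbitadescapa} applied repeatedly. Your phase-plane sphere case, however, is only a plan, not a proof. The positively invariant region is never constructed. You also cannot borrow $\gamma_+\to e_0$ from Section~\ref{s44}, because there the case $x_+=x_-$ is ruled out precisely by invoking Thm.~\ref{thmnonexistenceclosed}. (The endpoint pattern $(0,2k\pi)\to(0,(2m+1)\pi)$ should be justified by $x'=\cos\theta/\sqrt{1+\tau^2x^2}$ being $+1$ where the profile leaves the axis and $-1$ where it returns, not by outward normals at top and bottom poles.)

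The paper closes this case directly. It applies the reflection comparison about $\theta=\pi/2$ behind Prop.~\ref{propextremosorbita} to the orbit joining $(0,0)$ to $(0,(2m+1)\pi)$, iterating through Cor.~\ref{cororbitadescapa} when $m\geq1$. That comparison says an orbit crossing the strip $0\leq\theta\leq\pi$ must end at a strictly smaller $x$ than it starts, which is absurd when it starts on the axis. So that proposition is the obstruction itself, not a tool for trapping.

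Your fallback, on the other hand, is a complete and correct proof, and a genuinely different one. The proviso is standard:
\begin{itemize}
\item For a Killing field $X$ one has $\langle\nabla_{e_i}X,e_i\rangle=0$. Hence $\mathrm{div}_M X^T=2H\langle X,N\rangle$ with the paper's sign convention, and integrating over the closed immersed surface gives $\int_M H\langle N,X\rangle\,dA=0$. Take $X=\partial_z=E_3$, which is Killing since vertical translations are isometries.
\item The vanishing of $\int_M\langle N,\partial_z\rangle\,dA$ also holds for immersed $M$. The volume form of \eqref{eqmetricanil} is $dx\wedge dy\wedge dz$, so $\langle N,\partial_z\rangle\,dA$ is the pull-back of $dx\wedge dy=d(x\,dy)$, which is exact.
\item Finally, $\langle N,\partial_z\rangle\equiv0$ fails at a point where $z|_M$ is maximal: there $\partial_z$ cannot be tangent to $M$, since $dz(\partial_z)=1$.
\end{itemize}

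Compared with the paper, this argument uses no symmetry, no embeddedness and no sign condition on $\lambda$. It excludes every closed $\lambda$-translator in $\nil$, a question the paper leaves open, citing the lack of a height function with $\Delta h=2H\langle N,\partial_z\rangle$. The Killing field $\partial_z$ does the job without being a gradient.

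It also frees you from Prop.~\ref{propnoclosedorbits}. Its Dulac divergence $-2x\sin\theta/\sqrt{1+\tau^2x^2}$ is, as you yourself noticed, negative only where $\sin\theta>0$, while for $0<\lambda<1$ orbits can cross $\theta=\pi$ in both directions. What the paper's route buys is the phase-plane drift of the crossings, which is reused in Thms.~\ref{t1} and \ref{t2}. For the present statement you should lead with the integral argument.
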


Before proving the result, we think that few comments must be done. The existence of certain closed surfaces in $\nil$ is a challenging task even for the widely studied case of CMC surfaces. For instance, the uniqueness of the rotational embedded CMC spheres among closed, embedded surfaces in $\nil$ is still an outstanding open problem. In \cite{bue3}, the author generalized the classification of rotational CMC surfaces to the case when the mean curvature depends on the unit normal. It was proved that for certain choices of the prescribed function, the rotational embedded prescribed mean curvature spheres constructed were not unique in the Alexandrov sense, as it was exhibited the existence of embedded tori which were even rotational. Note that rotational CMC tori do not exist in $\nil$ in virtue of the classification in \cite{tom}.

Regarding the non-existence of closed $\lambda$-translators, in $\r^3$ and in $\mathbb{H}^2\times\r$ it is a consequence of the elliptic equation $\Delta h=2H\langle N,\partial_z\rangle$ fulfilled by the restriction of \emph{height function} $h$ to an immersed surface, and the fact that any closed surface is null-homologous in the second homology group of the ambient space, hence is the boundary of a 3-chain. Then we conclude by Stoke's theorem; see e.g. \cite{buor2,lop2}. The product structure in both spaces makes this height function readily available, but there is not an analog to this function in $\nil$. This has as consequence the non-existence of sharp height estimates for CMC graphs, which differs from the scenario in $\r^3$ and the product spaces $\mathbb{M}^2\times\r$.

Although the non-existence of a general closed $\lambda$-translator with any further assumption on its symmetries or topology seems to the author an interesting, and far for being trivial, problem to address, we restrict ourselves to the non-existence of rotational ones since the main purpose of the paper is to study invariant $\lambda$-translators and in particular rotational ones. Note that any closed rotational surface in $\nil$ must have genus 0 or 1, since $\nil$ is diffeomorphic to $\r^3$ and a connected planar curve cannot introduce multiple disjoint cycles needed for adding higher genus. Thus, a rotational closed $\lambda$-translator must be either diffeomorphic to a sphere or a torus. After all the background regarding closed surfaces, we prove Thm. \ref{thmnonexistenceclosed}.

\begin{proof}
The proof is by contradiction. Let $\alpha(s)=(x(s),0,z(s))$ be the base curve of a rotational closed $\lambda$-translator. Thus, $\alpha(s)$ either stays at a positive distance to the rotation axis and is a closed curve (hence generating a torus), or intersects the rotation axis orthogonally (hence generating a sphere).

First, assume that $\alpha(s)$ generates a torus and thus its image is a closed curve, possibly with self-intersections. Thus, there exists a minimum $T>0$ such that $\alpha(s)=\alpha(s+T)$ for every $s\in\r$. In particular, both the $x$ and $z$-coordinates are $T$-periodic functions. Let us denote by $\gamma(s)=(x(s),\theta(s))$ to the orbit corresponding to $\alpha(s)$ in a fundamental interval $[s_0,s_0+T]$; the closeness of $\alpha(s)$ implies that $\gamma(s)$ is a periodic orbit. In case that $\alpha(s)$ does not self-intersect before closing, then after a translation of $2\pi$ in the $\theta$-direction we have that $\gamma(s)$ lies contained in the region $\theta\in[-\pi,\pi]$. However, this is impossible due to Cor. \ref{cororbitadescapa}. If $\alpha(s)$ self-intersects then it does a finite number of times and thus $\gamma(s)$ is again a periodic orbit but this time lies contained in some strip $\theta\in[(2k-1)\pi,(2m+1)\pi]$, with $k<m$. In any case, we contradict again Cor. \ref{cororbitadescapa} after invoking it a finite number of times. This concludes the non-existence of rotational $\lambda$-translators with the topology of a torus.

If $\alpha(s)$ generates a sphere, the proof is similar. Let be $\gamma(s)$ the orbit corresponding to $\alpha(s)$ and assume by contradiction that they generate a $\lambda$-translator $S_0$ with the topology of a sphere. Let $p_0,q_0\in S_0$ the points of intersection of $S_0$ with the rotation axis. After a reparametrization of $\alpha(s)$ we assume $\alpha(0)=p_0$ and $\alpha(s_0)=q_0$ with $s_0>0$ and thus $x(s)$ increases for $s>0$ small enough and decreases for $s<s_0$ close enough to $s_0$. In particular, $x'(0)=1$ and $x'(s_0)=-1$. This implies that the orbit $\gamma(s)$ is a compact arc having as endpoints $(0,2k\pi)$ and $(0,(2m+1)\pi)$, with $k,m\in\mathbb{Z}$. We can assume, up to a discrete translation in the $\theta$-direction of $\R$, that $\gamma(0)=(0,0)$ and $\gamma(s_0)=(0,(2m+1)\pi)$ with $m\geq0$. 

Start by assuming that $m=0$ and thus $\gamma$ has as endpoints $(0,0)$ and $(0,\pi)$ and passes through some $(x_0,\pi/2)$ with $x_0>0$. Thus, $S_0$ is embedded since $z'(s)>0$ for every $s\in(0,s_0)$. Let us decompose $\gamma$ as $\gamma_1\cup\gamma_2$, where $\gamma_1$ is the portion of $\gamma$ in the region $\theta\in[0,\pi/2]$ and $\gamma_2$ is the portion of $\gamma$ in the region $\theta\in[\pi/2,\pi]$. From Prop. \ref{propextremosorbita} we know that $\gamma_2$ should intersect $\theta=0$ at some $(x_1,0)$ with $x_1>0$. But this is actually impossible since $\gamma_2$ in the region $\theta\in[0,\pi/2]$ agrees with $\gamma_1$. This contradiction proves that there does not exist an orbit having the points $(0,0)$ and $(0,\pi)$ as endpoints. If $m\geq1$ the proof is similar after applying Cor. \ref{cororbitadescapa} a finite number of times. 

This proves the non-existence of closed rotational $\lambda$-translators.
\end{proof}

\subsection{Proof of Thm. \ref{t1}}\label{s44}

Next we prove Thm. \ref{t1} by classifying the rotational $\lambda$-translators intersecting the rotation axis. By Prop. \ref{propintersectionorthogonal}, we know that such intersection must be orthogonal, and by Def. \ref{defiM+-} we know the existence of rotational $\lambda$-translators $M_\pm$ intersecting orthogonally the rotation axis. 

Let $\gamma_\pm$ the orbits corresponding to the $\lambda$-translators $M_\pm$, respectively, as defined in Def. \ref{defiM+-}. We know that $\gamma_+$ has the point $(0,0)$ as endpoint, say at $s=0$, and then for $s>0$ the monotonicity of $\R$ implies that $\gamma_+$ lies at the right-hand side of $\Gamma$. Similarly, $\gamma_-$ has $(0,\pi)$ as endpoint, again say at $s=0$, but this time we must distinguish between the values $\lambda\geq1$ and $\lambda<1$. If $\lambda\geq1$, then $\Gamma$ lies entirely in $\theta\in(0,\pi)$ and for $s<0$, $\gamma_-$ lies at the right-hand side of $\Gamma$ and outside the region bounded by $\Gamma$. If $\lambda<1$ then this time $\Gamma$ has also points in the region $\theta\in(\pi,3\pi/2)$ and in this case $\gamma_-$ lies at the right-hand side of $\Gamma$ for $s<0$, but inside the region bounded by $\Gamma$. Then, $\gamma_-$ intersects $\Gamma$ and reaches the line $\theta=\pi$. In any case, after finite instants, both $\gamma_\pm$ have points lying in the region $\theta\in(0,\pi)$. Since $\gamma_+$ (resp. $\gamma_-$) cannot stay in the region $\theta\in(0,\pi/2)$ (resp. in $\theta\in(\pi/2,\pi)$), $\gamma_+$ (resp. $\gamma_-$) must converge to the line $\theta=\pi/2$. Furthermore, $\gamma_-$ must actually intersect $\theta=\pi/2$ at some $(x_-,\pi/2)$, while $\gamma_+$ can either converge to $e_0$ as $s\to\infty$, or also intersect $\theta=\pi/2$ at some $(x_+,\pi/2)$. See Fig. \ref{fig:fasesrot}, left for the case $\lambda\geq1$ and right for $\lambda<1$.

\begin{figure}[H]
\centering
\begin{tikzpicture}[scale=1]
\node[anchor=south east,inner sep=0] at (-1,0){\includegraphics[width=.4\textwidth]{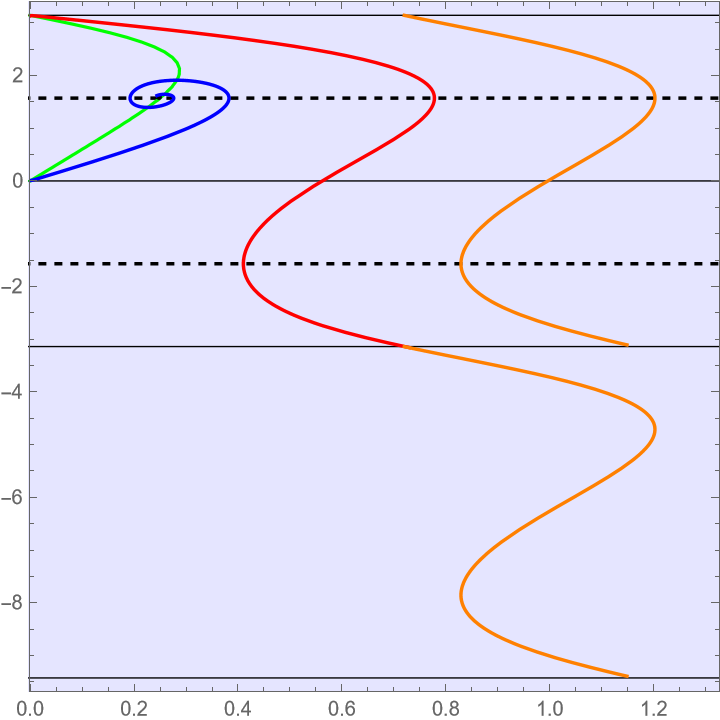}};
\node[anchor=south west,inner sep=0] at (1,0){\includegraphics[width=.4\textwidth]{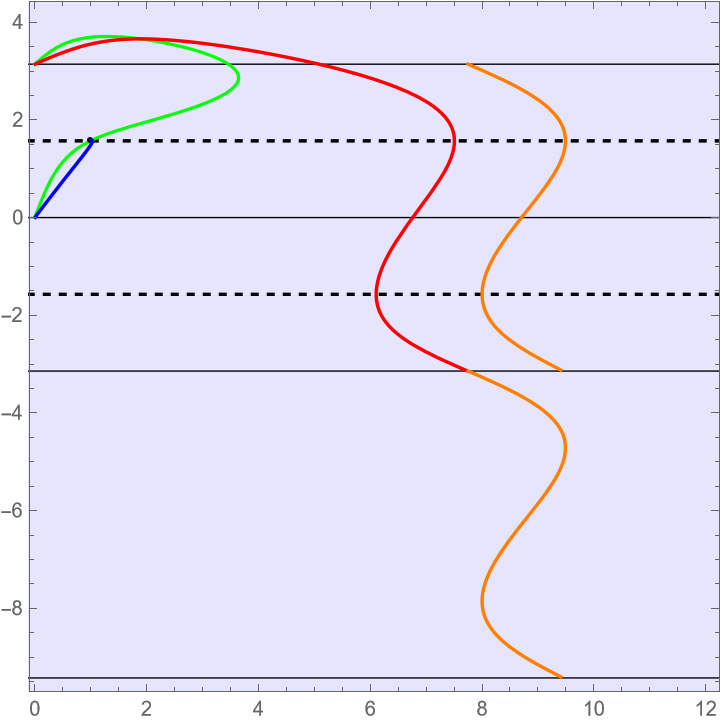}};
\end{tikzpicture}
\caption{The phase plane of Eq. \eqref{eqsystemrotational}. Left, for $\lambda\geq1$. Right, for $\lambda<1$. In both cases we have plotted the orbits $\gamma_+$ in blue and $\gamma_-$ in red. In orange we see the smooth prolongation of $\gamma_-$.}
\label{fig:fasesrot}
\end{figure}

Now we prove that if $\gamma_+$ intersects $\theta=\pi/2$ at $(x_+,\pi/2)$ with $x_+>1/(2\lambda)$, then $x_+<x_-$. If $x_+=x_-$, then we contradict Thm. \ref{thmnonexistenceclosed}, as the smooth orbit $\gamma_0=\gamma_+\cup\gamma_-$ would generate a topological sphere. If $x_+>x_-$, then $\gamma_-$ would lie at the left-hand side of $\gamma_+$ at the line $\theta=\pi/2$, and since $\gamma_-$ cannot intersect $\gamma_+$ the only possibility for $\gamma_-$ is to intersect $\Gamma$. Since $\gamma_-$ cannot converge to some $(0,\theta_0),\ \theta_0\in(0,\pi)$ by Prop. \ref{propintersectionorthogonal}, $\gamma_-$ must intersect again the line $\theta=\pi/2$, then $\Gamma$ and again $\theta=\pi/2$. In particular $\gamma(s)$ stays at a bounded distance to $e_0$ for $s\to-\infty$, which contradicts Cor. \ref{corcomportamientooribtasPB}. Therefore, $x_+<x_-$ and consequently $\gamma_+(s)\to e_0$ as $s\to\infty$. 

The profile curve $\alpha_+$ of the corresponding $\lambda$-translator $M_+$ is complete, has strictly increasing $z$-function and thus $M_+$ is embedded. Furthermore, $M_+$ converges to the vertical cylinder of constant mean curvature $\lambda$ and of radius $1/(2\lambda)$. See Fig. \ref{fig:rotlambdamayorigual1}, left.

\begin{figure}[H]
\centering
\begin{tikzpicture}[scale=1]
\node[anchor=south east,inner sep=0] at (-1.5,0){\includegraphics[width=.4\textwidth]{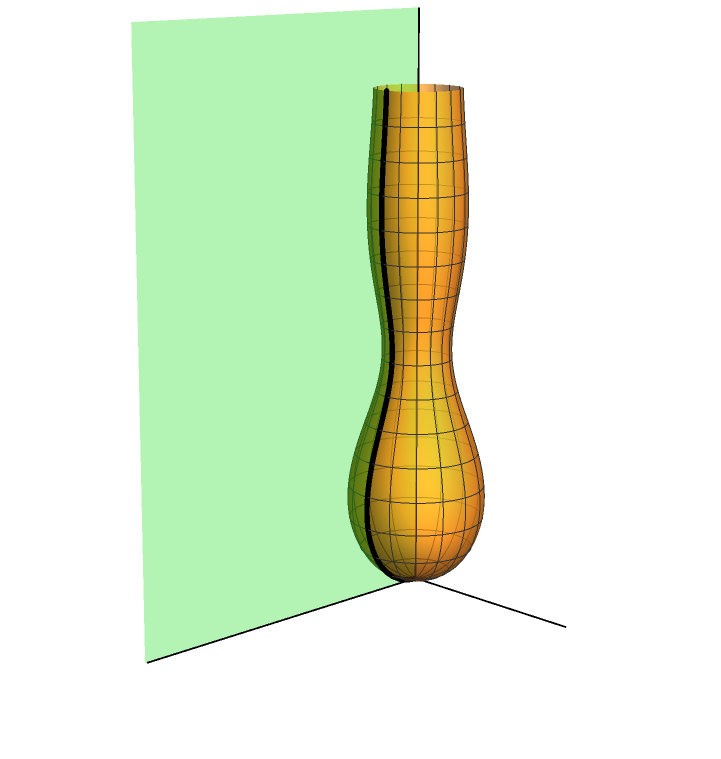}};
\node[anchor=south west,inner sep=0] at (1,0){\includegraphics[width=.5\textwidth]{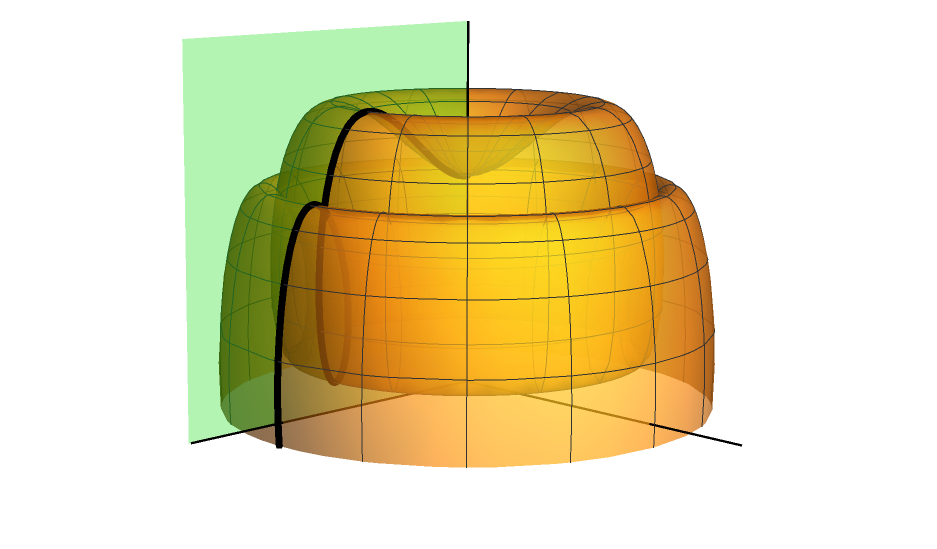}};
\node[anchor=south,inner sep=0] at (0,0){\includegraphics[width=.4\textwidth]{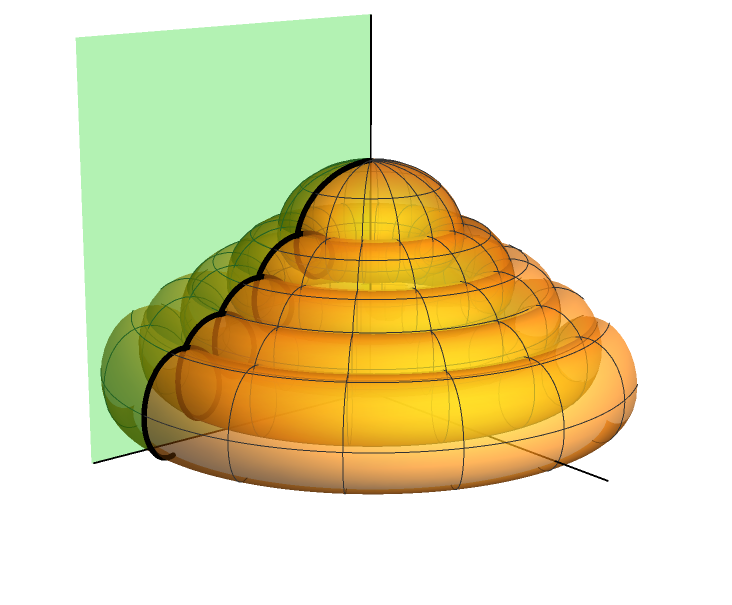}};
\end{tikzpicture}
\caption{Rotational $\lambda$-translators intersecting orthogonally the rotation axis. Left: the unit normal at the rotation axis is $E_3$. Center: $\lambda\geq1$ and the unit normal at the rotation axis is $-E_3$. Right: $\lambda<1$ and the unit normal at the rotation axis is $-E_3$.}
\label{fig:rotlambdamayorigual1}
\end{figure}

Regarding $\gamma_-$, it intersects the line $\theta=0$ at $(x_0,0)$ for some $x_0>0$, and then enters the region $\theta\in(-\pi,0)$ where the curve $\Gamma$ does not appear. Since $\gamma_-$ cannot converge to $\theta=0$, it reaches the line $\theta=-\pi/2$ where its $x$-coordinate reaches a local minimum and then increases, and finally $\gamma_-$ intersects $\theta=–\pi$ at some $(x_1,-\pi)$ with $x_1>0$. Now we take advantage of the $2\pi$-periodicity of $\R$ in the $\theta$-direction as in the previous section. We consider the orbit $\gamma_1$ such that $\gamma_1(0)=(x_1,\pi)$. Since $\gamma_1$ and $\gamma_-$ cannot intersect then $\gamma_1$ lies at the right-hand side of $\gamma_-$ in the region $\theta\in(-\pi,\pi)$ and eventually passes through some $(x_2,-\pi)$ with $x_2>x_1$. By $2\pi$-periodicity we can consider the translation $\gamma_1-(0,2\pi)$, which agrees with $\gamma_-$ at $(x_1,-\pi)$. By uniqueness, we can smoothly glue $\gamma_-$ and $\gamma_1$ to form a larger orbit, that will be still denoted by $\gamma_-$, that reaches the line $\theta=-3\pi$ exactly at $(x_2,-3\pi)$.

This process is repeated and we obtain a complete orbit $\gamma_-$ that has $(0,\pi)$ as endpoint. Although at this point is not crucial, we prove that the $x$-coordinate of $\gamma_-$ diverges. This will follow if we show that the orbits $\gamma_n$ generated in the region $\theta\in(0,\pi)$ for being glued with $\gamma_-$ do not accumulate to a finite orbit $\gamma_\infty$. If this was the case, let $(x_\infty^1,0)$ and $(x_\infty^2,\pi)$ denote the endpoints of $\gamma_\infty$. By Prop. \ref{propextremosorbita} we know that $x_\infty^1>x_\infty^2$. For an orbit $\gamma_n$ we denote as well by $(x_n^1,0)$ and $(x_n^2,\pi)$ its endpoints. Hence, $x_n^k\to x_\infty^k$ for $k=1,2$. In particular, there exists $n_0$ big enough such that $x_{n_0}^1>x_\infty^2$, and thus the next orbit $\gamma_{n_0+1}$ would lie at the right-hand side of $\gamma_\infty$, a contradiction.

The corresponding profile curve $\alpha_-$ is complete, both its $x$ and $z$-functions increase and decrease and in particular $\alpha_-$ has infinitely-many self-intersections. Finally, depending on if $\lambda\geq1$ or $\lambda<1$, the behavior of $\alpha_-$ at the rotation axis changes. Recall that at such intersection, the unit normal in any case is $-E_3$. If $\lambda\geq1$, then $\alpha_-$ intersects the rotation axis with strictly increasing $z$-function, hence around such intersection the surface is strictly convex in the sense that $M_-$ lies above the plane $\{z=z(0)\}$. See Fig. \ref{fig:rotlambdamayorigual1}, center. If $\lambda<1$, then $\alpha_-$ intersects the rotation axis with strictly decreasing $z$-function, hence around such intersection the surface is strictly concave in the sense that $M_-$ lies below the plane $\{z=z(0)\}$. See Fig. \ref{fig:rotlambdamayorigual1}, right.

\subsection{Proof of Thm. \ref{t2}}\label{s45}

Next, we classify the rotational $\lambda$-translators that do not intersect the rotation axis. The classification follows easily in virtue of all the study previously done, hence we just highlight the main features and skip the details.

Fix $x_0>0$ and let $\gamma$ be the orbit passing through $(x_0,\pi/2)$ at $s=0$. If $x_0=1/(2\lambda)$ we know that $\gamma$ is the equilibrium $e_0$ and the rotational $\lambda$-translator is the circular cylinder of CMC $\lambda$. As in the previous section, let $x_+<x_-$ denote the intersection of the orbits $\gamma_+,\gamma_-$, respectively, with $\theta=\pi/2$. If $x_0\neq 1/(2\lambda)$ then we distinguish cases. 

If $x_0<x_-$ then the orbit $\gamma_-$ acts as a barrier and necessarily $\gamma(s)\to e_0$ as $s\to\infty$; one can apply here Cor. \ref{corcomportamientooribtasPB}. The only point to remark here is that depending on if $\lambda\geq1$ (Fig. \ref{fig:rotlambdamayorigual11}, left) or $\lambda<1$ (Fig. \ref{fig:rotlambdamayorigual11}, right) the orbit $\gamma$ may intersect $\Gamma$ directly as $s\to\infty$ (the orbits in orange in Fig. \ref{fig:rotlambdamayorigual11}), or may intersect first the line $\theta=\pi$ before intersecting $\Gamma$ (the orbit in pink in Fig. \ref{fig:rotlambdamayorigual11}, right). When $s\to-\infty$ then $\gamma(s)$ stays at the left-hand side of $\gamma_-$ and since $\gamma_+$ acts as well as a barrier, the behavior of $\gamma$ is similar to the one of $\gamma_-$. See Fig. \ref{fig:rotlambdamayorigual11}, both orbits in orange and the one in pink, right.

\begin{figure}[H]
\centering
\begin{tikzpicture}[scale=1]
\node[anchor=south east,inner sep=0] at (-.5,0){\includegraphics[width=.4\textwidth]{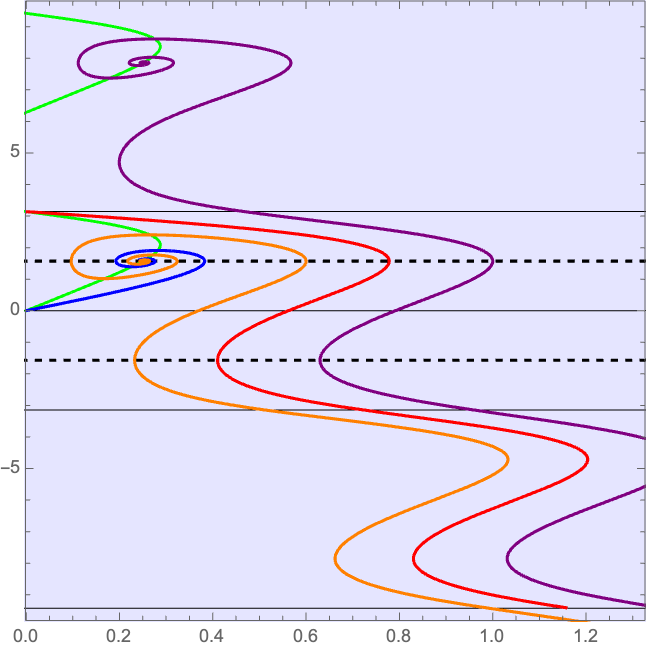}};
\node[anchor=south west,inner sep=0] at (.5,0){\includegraphics[width=.4\textwidth]{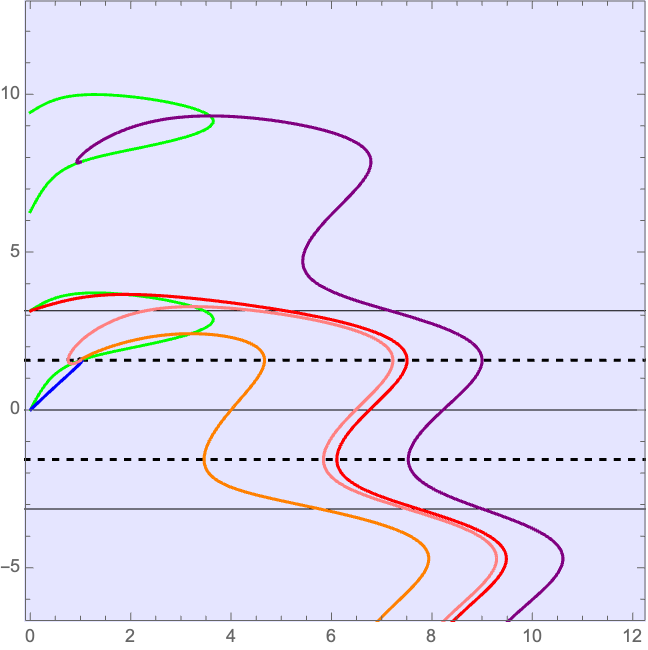}};
\end{tikzpicture}
\caption{The phase plane of Eq. \eqref{eqsystemrotational}. Left: for $\lambda\geq1$; right: for $\lambda<1$. In both phase planes there are orbits corresponding to rotational $\lambda$-translators not intersecting the rotation axis.}
\label{fig:rotlambdamayorigual11}
\end{figure}

If $x_0>x_-$ the behavior of $\gamma(s)$ as $s\to-\infty$ is clear, as $\gamma$ stays at the right-hand side of $\gamma_-$. If $s\to\infty$ then the $x$-coordinate of $\gamma$ tends to decrease as $\gamma$ eventually converges to some equilibrium $e_0+(0,2k\pi)$ for some $k>0$. By $2\pi$-periodicty in the $\theta$-direction of $\R$, this orbit could be realized as one of the previously studied item by just considering the first value $\hat{x_0}>0$ such that $\gamma$ passes through $(\hat{x_0},\pi/2+2k\pi)$ and $\hat{x_0}<x_-$. See Fig. \ref{fig:rotlambdamayorigual11}, both orbits in purple. We deduce due to the $2\pi$-periodicity that the fact that $x_0<x_-$ or not is irrelevant, as any orbit will eventually converge to some equilibrium as $s\to\infty$.

The corresponding rotational $\lambda$-translator is a properly immersed annulus, with one end being embedded and converging to the CMC cylinder of radius $1/(2\lambda)$ and the other with unbounded distance to the rotation axis and self-intersecting infinitely many times. See Fig. \ref{fig:rotlambdamayorigual11}, left for the cases $\lambda\geq1$; center for $\lambda<1$ and the orbit does not intersect $\theta=\pi$; and right for $\lambda<1$ and the orbit intersects $\theta=\pi$.
 
\begin{figure}[H]
\centering
\begin{tikzpicture}[scale=1]
\node[anchor=south east,inner sep=0] at (-1.5,0){\includegraphics[width=.4\textwidth]{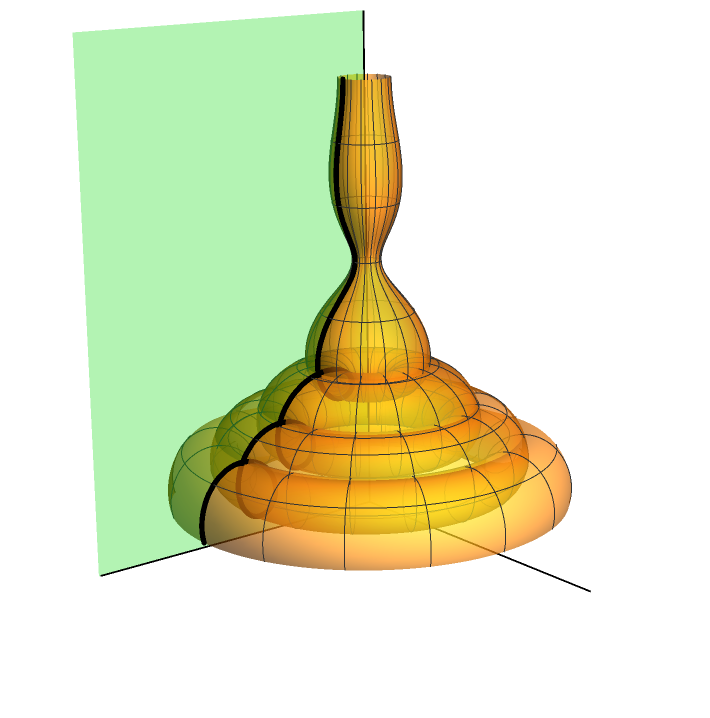}};
\node[anchor=south,inner sep=0] at (.4,0){\includegraphics[width=.4\textwidth]{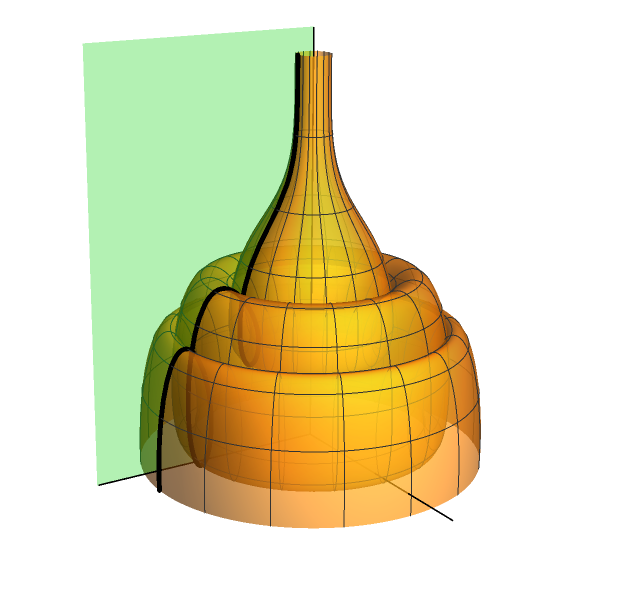}};
\node[anchor=south west,inner sep=0] at (2,0){\includegraphics[width=.4\textwidth]{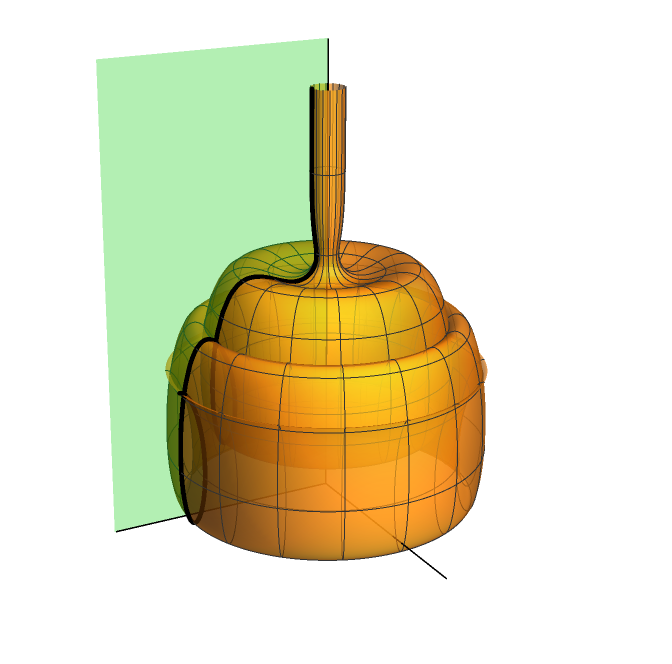}};
\end{tikzpicture}
\caption{Rotational $\lambda$-translators not intersecting the rotation axis. Left: $\lambda\geq1$; center: $\lambda<1$ and the corresponding orbit does not intersect $\theta=\pi$; right: $\lambda<1$ and the corresponding orbit intersects $\theta=\pi$.}
\label{fig:rotlambdamayorigual11}
\end{figure}

\section{Helicoidal $\lambda$-translators}\label{s5}

In the final section of this paper, we classify helicoidal $\lambda$-translators. An helicoidal motion in $\nil$ of \emph{pitch} $h>0$ is the one-parameter group of isometries defined as the composition of a rotation $\rho_t$ and a vertical translation $V_{ht}$,
$$
\varpi_t=\rho_t\circ V_t (x,y,z)=(x\cos t+y\sin t,x\sin t-y\cos t,z+ht).
$$
As in the rotational case, a helicoidal surface is defined as the image of a planar curve $\alpha(s)=(x(s),0,z(s))$, called the base or generating curve, by the one-parameter group of helicoidal motions $\varpi_t$. Thus, a parametrization is
$$
\psi(s,t)=(x(s)\cos t,x(s)\sin t,z(s)+ht).
$$
Although the intermediate computations will be omitted, we highlights the main features. The first fundamental form has determinant
$$
\langle\psi_s,\psi_s\rangle\langle\psi_t,\psi_t\rangle-\langle\psi_s,\psi_t\rangle^2=((h-\tau x^2)^2+x^2)x'+x^2z'^2,
$$
hence we consider the arc-length parameter
$$
x'=\frac{\cos\theta}{\sqrt{(h-\tau x^2)^2+x^2}},\qquad z'=\frac{\sin\theta}{x}.
$$
With this arc-lenght parameter, the mean curvature and angle function are
$$
2H=\frac{\sin\theta}{x}+\theta'\sqrt{(h-\tau x^2)^2+x^2},\qquad \langle N,E_3\rangle=\frac{x\cos\theta}{\sqrt{(h-\tau x^2)^2+x^2}}.
$$
Consequently, the following system is fulfilled by the coordinate curves of a helicoidal $\lambda$-translator.

$$
\left\lbrace
\begin{array}{l}
\vspace{.2cm} x'=\dfrac{\cos\theta}{\sqrt{(h-\tau x^2)^2+x^2}},\\
z'=\dfrac{\sin\theta}{x},\\
\theta'=\dfrac{1}{\sqrt{(h-\tau x^2)^2+x^2}}\left(2\left(\dfrac{\cos\theta}{\sqrt{(h-\tau x^2)^2+x^2}}+\lambda\right)-\dfrac{\sin\theta}{x}\right).
\end{array}
\right.
$$

\subsection{The phase plane of helicoidal $\lambda$-translators}\label{s51}
As in the previous section, we restrict to the $2D$-system
\begin{equation}\label{eqsystemhelicoidal}
\left\lbrace
\begin{array}{l}
\vspace{.2cm} x'=\dfrac{\cos\theta}{\sqrt{(h-\tau x^2)^2+x^2}},\\
\theta'=\dfrac{1}{\sqrt{(h-\tau x^2)^2+x^2}}\left(2\left(\dfrac{\cos\theta}{\sqrt{(h-\tau x^2)^2+x^2}}+\lambda\right)-\dfrac{\sin\theta}{x}\right),
\end{array}
\right.
\end{equation}
and define the phase plane $\R$ as the solutions (orbits) of Eq. \eqref{eqsystemhelicoidal}. We study its properties and its monotonicity regions by means of the implicit curve
$$
\Gamma=\{(x,\theta)\in\R\colon F(x,\theta)=0\},
$$
where
$$
F(x,\theta)=2x\left(\lambda+\dfrac{x\cos\theta}{\sqrt{(h-\tau x^2)^2+x^2}}\right)-\sin\theta.
$$
We see again that $e_0=(\frac{1}{2\lambda},\frac{\pi}{2})$ is the unique equilibrium of \eqref{eqsystemhelicoidal}, whose corresponding helicoidal surface is just the vertical CMC cylinder of radius $1/(2\lambda)$. 

Although the existence of the parameter $h$ complicates in substantial ways the study of the global properties of $\Gamma$, we are still able to prove its main properties. First, recall that for fixed datum $\tau,\lambda,h$, the equation $F(x,\theta)=0$ does not admit as solution a sequence $(x_n,\theta_n)\in\Gamma$ with $x_n\to\infty$, hence there exists $C=C(\tau,\lambda,h)>0$ such that $\Gamma\subset\{0\leq x\leq C\}$. Since $F(x,\theta)=0$ has no solution for $\theta\in(3\pi/2,2\pi)$, we conclude that $\Gamma$ is contained in the rectangle $[0,C]\times[0,2\pi]$ and since $\Gamma$ is closed we deduce that $\Gamma$ is compact, with finitely-many connected components. Furthermore, if $\lambda>1$ then the constant $C$ can be chosen to be independent of $h$.

Recall that $F(0,0)=F(0,\pi)=0$ and
$$
\frac{\partial F}{\partial x}(x,\theta)=2\left(\lambda+\frac{x(2h(h-\tau x^2)+x^2)\cos\theta}{((h-\tau x^2)^2+x^2)^{3/2}}\right),\qquad \frac{\partial F}{\partial \theta}(x,\theta)=-\cos\theta-\frac{2x^2\sin\theta}{\sqrt{(h-\tau x^2)^2+x^2}},
$$
hence
$$
\frac{\partial F}{\partial \theta}(0,0)=-1,\qquad \frac{\partial F}{\partial \theta}(0,\pi)=1,
$$
and $\Gamma$ is expressed locally as a vertical graph $\theta=\theta(x)$ around $(0,0)$ and $(0,\pi)$. Moreover, around $(0,0)$ (resp. around $(0,\pi)$) we have $\theta'(0)=2\lambda$ (resp. $\theta'(0)=-2\lambda$) and thus both components of $\Gamma$ lie in the region $\theta\in(0,\pi)$ locally around both $(0,0)$ and $(0,\pi)$, regardless of the value of $\lambda$. This is a first difference with the rotational case, since there for $\lambda<1$ the curve $\Gamma$ had points above the line $\theta=\pi$ around $(0,\pi)$.

We begin by proving that for each $\theta_0\in(0,\pi/2]$ there exists a unique $x_0$ such that $(x_0,\theta_0)\in\Gamma$. This implies in particular that $\Gamma$ can be expressed in such a region for $\theta$ as a horizontal graph $x=x(\theta)$. So, fix $\theta_0$ and consider the 1-dimensional function $F(x,\theta_0)$. Then, 
$$
F(0,\theta_0)=-\sin\theta_0<0,\quad \lim_{x\to\infty}F(x,\theta_0)=\infty,\quad \frac{\partial F}{\partial x}F(x,\theta_0)>0,
$$
hence the existence of a unique $x_0>0$ such that $F(x_0,\theta_0)=0$ is assured. In particular, we deduce that $\Gamma$ has a single connected component in the region $[0,\pi/2]$, which agrees with the graph of $\theta=\theta(x)$ passing through $(0,0)$ since $\partial F/\partial \theta>0$ in this region.

Now we focus on the region $\theta\in(\pi/2,\pi]$ and begin by studying the intersection of $\Gamma$ with the line $\theta=\pi$. This will be the first difference arising due to the existence of the parameter $h>0$. We have
$$
F(x,\pi)=2x\left(\lambda-\frac{x}{\sqrt{x^2+(h-\tau x^2)^2}}\right).
$$ 
Thus:
\begin{itemize}
\item If $\lambda>1$ and $x>0$, we find no solutions of $F(x,\pi)=0$, hence $\Gamma$ intersects $\theta=\pi$ only at $x=0$.
\item If $\lambda=1$ and $x>0$, then $F(x,\pi)=0$ has a double solution $x_1^\pm=\sqrt{h/\tau}$. Furthermore, $\frac{\partial F}{\partial\theta}(x_1^\pm,\pi)=-1$ and $\frac{\partial F}{\partial x}(x_1^\pm,\pi)=0$ and hence $\Gamma$ is a graph $\theta=\theta(x)$ around $(x_1^\pm,\pi)$ that attains a global maximum at $x_1^\pm$. In particular, for each $h>0$ there exists $\theta_h$ close to $\pi$ such that $F(x,\theta_h)=0$ has exactly three solutions: one close to $(0,\pi)$ and the other two near $(x_1^\pm,\pi)$.
\item If $\lambda<1$ and $x>0$, then $F(x,\pi)=0$ has two well-defined real solutions
\begin{equation}\label{eqpuntosxmasmenos}
x_\lambda^{\pm}=\frac{\sqrt{1-\lambda^2+2h\tau\lambda^2\pm\sqrt{(1-\lambda^2)(1-\lambda^2+4h\tau\lambda^2)}}}{\lambda\tau\sqrt{2}}.
\end{equation}
We can also express $\Gamma$ as a vertical graph $\theta=\theta(x)$ around each of such points, and this time $\theta'(x_\lambda^-)>0$ and $\theta'(x_\lambda^+)<0$. We will deduce later that the component $\Gamma\cap\{\theta\in(\pi,3\pi/2)\}$ is a connected vertical graph that joins the points $(x_\lambda^-,\pi)$ and $(x_\lambda^+,\pi)$ and thus can be described by the function $\theta(x)$. Furthermore,
$$
\lim_{h\to\infty}x_\lambda^+-x_\lambda^-=\frac{\sqrt{1-\lambda^2}}{\lambda\tau}.
$$
Thus, even if $h\to\infty$, these points of intersection do not collapse to the same point.
\end{itemize}

We investigate the points of $\Gamma$ such that their $x$-coordinate has a local extrema. Around such points we can express $\Gamma$ as a horizontal graph $x=x(\theta)$ and $(x,\theta)\in\Gamma$ satisfies $x'(\theta)=0$ if and only if $\frac{\partial F}{\partial\theta}(x,\theta)=0$. Thus, points with local extrema at the $x$-coordinate are located at the intersection of $\Gamma$ with the graph of the function
$$
f(x)=\pi-\arctan\frac{\sqrt{(h-\tau x^2)^2+x^2}}{2x^2},\qquad x\geq0.
$$
We introduce the following notation: given a function $f$, we denote by $\mathcal{G}_f=\{(x,f(x))\colon x\in\mathrm{Dom}(f)\}$ to the graph of $f$. Note that $f$ extends smoothly to $x=0$ as $f(0)=\pi/2$ and $f'(0)=0$. Furthermore, $f'(x)=0$ for $x>0$ if and only if $x=\frac{\sqrt{2}h}{\sqrt{2h\tau-1}}$. This value is well-defined whenever $2h\tau>1$. Thus, if $2h\tau\leq1$ then $f$ is strictly increasing and otherwise has a local maximum whose value on $f$ is
$$
f\left(\frac{\sqrt{2}h}{\sqrt{2h\tau-1}}\right)=\pi-\arctan\frac{\sqrt{4h\tau-1}}{h}.
$$
In any case, $\lim_{x\to\infty}f(x)=\pi-\arctan\frac{\tau}{2}$ and in particular $\mathcal{G}_f$ lies contained in the region $\theta\in(\pi/2,\pi)$. By compactness of $\Gamma$, we ensure the existence of at least a point $(x_0,\theta_0)\in\Gamma$ such that $x'(\theta_0)=0$. Furthermore, if $\Gamma$ is connected then it intersects the $\gf$ at a unique point $(x_0,\theta_0)$ and $x_0$ must be a global maximum. Indeed, at any local extrema one has $x''(\theta)=-\frac{F_{\theta\theta}}{F_x}$ and since
$$
\frac{\partial^2F}{\partial\theta^2}(x,\theta)=-\frac{2x^2\cos\theta}{\sqrt{(h-\tau x^2)^2+x^2}}+\sin\theta>0,\quad \theta\in(\pi/2,\pi),
$$
we deduce that $x$ cannot have inflexion points, only local maxima and minima. Thus, if $\Gamma$ is connected it can only intersect $\gf$ at a unique point. We also deduce that for any connected component of $\Gamma\cap\{\theta\in[\pi,3\pi/2)\}$, the points with largest and lowest $x$-coordinate are precisely located at the line $\theta=\pi$. In particular, any such component can be expressed as a vertical graph $\theta=\theta(x)$. Let us study in more detail the existence of such points. The conditions $F(x,\theta)=0$ and $\frac{\partial F}{\partial\theta}(x,\theta)=0$ yield
$$
l(x):=\frac{1}{2x}+\frac{2x^3}{x^2+(h-\tau x^2)^2}=\frac{\lambda}{\sin\theta},
$$
and by solving $\theta$ we define $L(x)=\pi-\arcsin\frac{\lambda}{l(x)}$. Thus, the existence of points in $\Gamma$ with local critical values of the $x$-coordinate are located in the intersection of the graphs $\gf$ and $\gL$. See Fig. \ref{fig:grafosfyL}.

\begin{figure}[H]
\centering
\begin{tikzpicture}[scale=1]
\node[anchor=south east,inner sep=0] at (-1,0){\includegraphics[width=.35\textwidth]{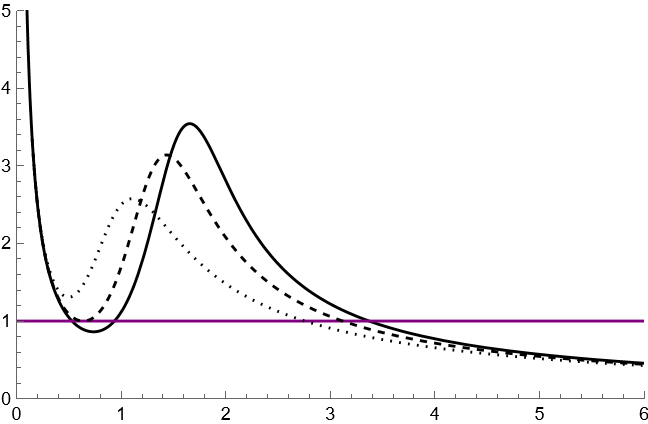}};
\node[anchor=south west,inner sep=0] at (1,0){\includegraphics[width=.45\textwidth]{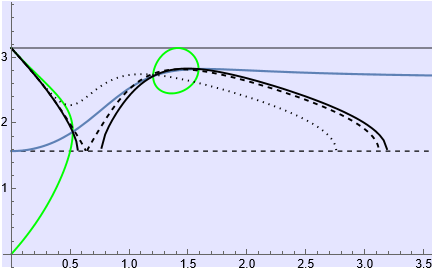}};
\end{tikzpicture}
\caption{Left: the graph of the function $l(x)$ for different values of the parameter $h$, and a horizontal line $y=\lambda$ for $\lambda=1$. Right, the graphs $\gf$ and $\gL$ in the phase plane $\R$. Every time that they intersect, the curve $\Gamma$ has a point with critical $x$-coordinate.}
\label{fig:grafosfyL}
\end{figure}

We study the properties of $L$ by means of the ones of $l$. First of all, recall that $L$ is only defined whenever $\lambda\leq l(x)$. Note that $\lim_{x\to0}l(x)=\infty$ and $L$ can be extended to $x=0$ as $L(0)=\pi$, and $\lim_{x\to\infty}l(x)=0$. Thus, for fixed $\lambda$ and $x$ sufficiently large $L(x)$ fails to be defined. Furthermore, there exists at least a value $x_0>0$ such that $l(x_0)=\lambda$ and therefore $\gL$ passes through the point $(x_0,\pi/2)$. Since $L(0)>f(0)$, $\gf$ and $\gL$ intersect at least once and we deduce again the existence of at least a point of $\Gamma$ with critical $x$-coordinate. Lastly, $L'(x)=0$ if and only if $l'(x)=0$ and if $x_0$ is such that $L'(x_0)=0$, then $L$ has a local maximum (resp. a local minimum) if and only if $l$ has at $x_0$ a local maximum (resp. a local minimum).

In Fig. \ref{fig:grafosfyL}, left, we can see the behavior of $\gl$ (plotted in black) for different values of the parameter $h$ and the line $y=\lambda$ for $\lambda=1$. In Fig. \ref{fig:grafosfyL}, right, we can see $\gL$ for the corresponding values of $h$ and also $\gf$. Each time that $l(x)$ intersects a horizontal line $y=\lambda$ at some $(x_0,l(x_0))$, we find that $\gL$ in $\R$ intersects the line $\theta=\pi/2$ at the point $(x_0,\pi/2)$, and the values of $x$ for which $l(x)<\lambda$ are not in the domain of $L$. In particular, $\gL$ has at most two connected components. We distinguish depending on the values of $\lambda$, starting from the limit case $\lambda=0$ and increasing it.
\begin{itemize}
\item If $ \lambda>0$ is small enough such that $l(x)=\lambda$ has only a solution $x_0$, then $\gL$ is a connected arc joining the point $(0,\pi)$ and $(x_0,\pi)$ and that intersects $\gf$ precisely at the unique point of $\Gamma$ with maximum $x$-coordinate.
\item  If $l(x)=\lambda$ has two solutions $x_0<x_1$, then the line $y=\lambda$ is tangent to $\gl$ at its local minimum point and thus $\gL$ is a connected arc that joins the points $(0,\pi)$ and $(x_0,\pi/2)$, then \emph{bounces} and joins the points $(x_0,\pi/2)$ and $(x_1,\pi/2)$.
\item If $l(x)=\lambda$ has three solutions $x_0<x_1<x_2$, then $\gL$ has two connected components: one, $\gL^1$, joining the points $(0,\pi)$ and $(x_0,\pi/2)$ and the other, $\gL^2$, joining $(x_1,\pi/2)$ and $(x_2,\pi/2)$.
\item When $\lambda$ further increases, the values $x_1$ and $x_2$ both converge to the value $x_+$ where $l$ reaches its local maximum. The component $\gL^2$ tends to be smaller, then collapses at the point $(x_+,\pi/2)$ and finally disappears.
\end{itemize}

Fix $\lambda>0$ and assume for a moment $h=0$, i.e. we are in the case of rotational surfaces. The monotony properties of $l$ depend on $\tau$. For instance, $l'(x)=0$ if and only if $\tau\leq 1/\sqrt{2}$ at the points
$$
x_\pm=\frac{1}{\sqrt{2-\tau^2\pm\sqrt{4-8\tau^2}}}.
$$
Hence, for $\tau\geq 1/\sqrt{2}$ the function $l$ is strictly decreasing, and if $\tau<1/\sqrt{2}$ then $l$ has a local minimum at $x_-$ and a local maximum at $x_+$. Now, for $h>0$ close enough to zero we have that $l$ has the same properties as for $h=0$ due to the continuity dependence of $h$. Let us denote by $x_\pm^h$ to the values where $l$ reaches its maximum and minimum, for given $h>0$. We find that as $h$ increases then $l(x_-^h)$ decreases and $l(x_+^h)$ increases; furthermore, we also see that $x_-^h$ and $x_+^h$ both increase and thus for fixed $\lambda>0$ the connected component $\gL^2$ always exists and has unbounded $x$-coordinate as $h$ increases.  

Now, for fixed $\tau$ and $\lambda$ and making $h$ vary, the behavior of $f$ and $L$ determines two different scenarios. In the first one, the graphs $\gf$ and $\gL$ intersect only once for every $h>0$. Thus, $\Gamma$ is connected and we are in an analog situation to rotational surfaces. In the second, the graphs $\gf$ and $\gL$ intersect only once for $h>0$ small enough; this always happens since for $h=0$ we only have one critical $x$-coordinate of $\Gamma$ (Fig. \ref{fig:comportamientoGammahelicoidales1}, left). As $h$ increases the local minimum of $\gL$ decreases and then intersects $\gf$ again, hence $\Gamma$ shrinks at such intersection (Fig. \ref{fig:comportamientoGammahelicoidales1}, center). For $h$ slightly greater we find two compact components of $\Gamma$: one, $\Gamma^1$, containing the points $(0,0)$ and $(0,\pi)$, and other, $\Gamma^2$ (Fig. \ref{fig:comportamientoGammahelicoidales1}, right). 

\begin{figure}[h]
\hspace{-1cm}
\begin{tikzpicture}[scale=.8]
\node[anchor=south east,inner sep=0] at (-3.5,0){\includegraphics[width=.4\textwidth]{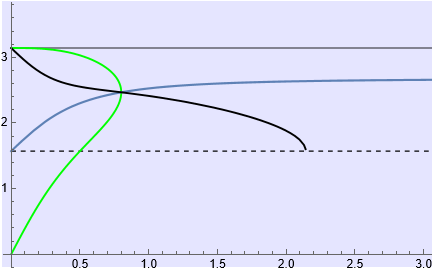}};
\node[anchor=south,inner sep=0] at (0,0){\includegraphics[width=.4\textwidth]{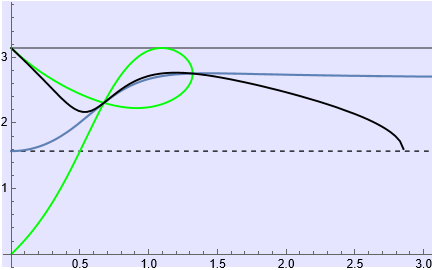}};
\node[anchor=south west,inner sep=0] at (3.5,0){\includegraphics[width=.4\textwidth]{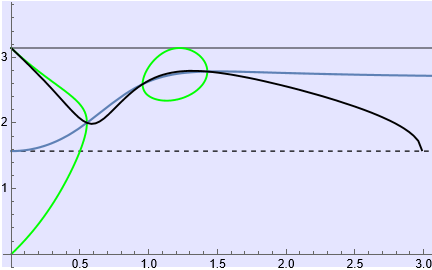}};
\end{tikzpicture}
\caption{The behavior of the curve $\Gamma$ and the graphs $\gf$ and $\gL$ for fixed $\tau,\lambda$ and varying $h$.}
\label{fig:comportamientoGammahelicoidales1}
\end{figure}

When $h$ further increases the component $\gL^2$ has greater $x$-coordinate and thus if $\lambda>1$ there exists $h_0>0$ such that the graph $\gL$ intersects $\gf$ tangentially one last time, making $\Gamma^2$ shrink and then disappears (Fig. \ref{fig:comportamientoGammahelicoidales2}, left).

If $\lambda=1$, there is always the point $(\sqrt{h/\tau},\pi)$ and $\Gamma^2$ is always locally a graph $\theta=\theta(x)$ around such point. However, as $h\to\infty$ the component $\Gamma^2$ tends to disappear. Indeed, assume that there exists $\theta_0\in(\pi/2,\pi)$ such that for every $h_n>0$ there exists $x_n>0$ big enough such that $(x_n,\theta_0)\in\Gamma^2$. Hence,
$$
2x_n\left(1+\frac{x_n\cos\theta_0}{\sqrt{x_n^2+(h_n-\tau x_n^2)^2}}\right)=\sin\theta_0.
$$ 
The right-hand side is fixed; it can be as small as we want, but is fixed. If $h_n\to\infty$ and so $x_n\to\infty$, we do not mind if the factor $(h_n-\tau x_n^2)^2$ converges to some non-negative constant, even zero, or diverge, as the left-hand side eventually diverges, a contradiction; the fact that $\theta_0$ is fixed is crucial. Thus, $\Gamma^2$ tends to shrink but never disappears for a finite $h>0$ (Fig. \ref{fig:comportamientoGammahelicoidales2}, center).

If $\lambda<1$ we know that there always exist the points $x_\lambda^\pm$ of intersection between $\Gamma^2$ and $\theta=\pi$, given by Eq. \eqref{eqpuntosxmasmenos}. This connected component will be unbounded if we make $h\to\infty$. Since the difference of the points $x_\lambda^+-x_\lambda^-$ is bounded as $h\to\infty$, this component $\Gamma^2$ never disappears (Fig. \ref{fig:comportamientoGammahelicoidales2}, right).

\begin{figure}[h]
\hspace{-1cm}
\begin{tikzpicture}[scale=.8]
\node[anchor=south east,inner sep=0] at (-3.5,0){\includegraphics[width=.4\textwidth]{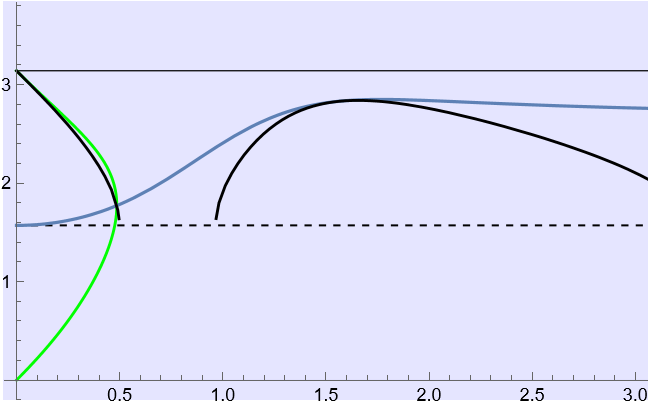}};
\node[anchor=south ,inner sep=0] at (0,0){\includegraphics[width=.4\textwidth]{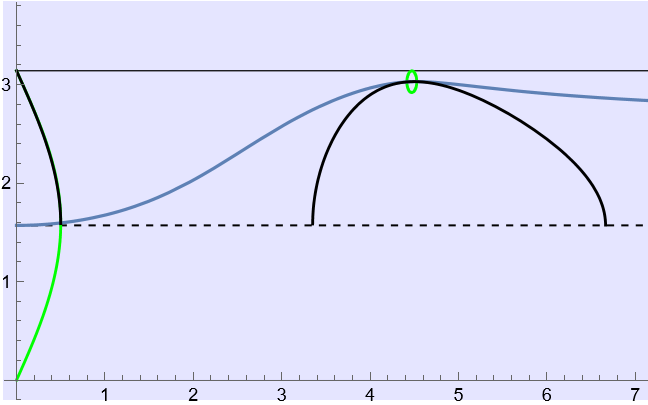}};
\node[anchor=south west,inner sep=0] at (3.5,0){\includegraphics[width=.4\textwidth]{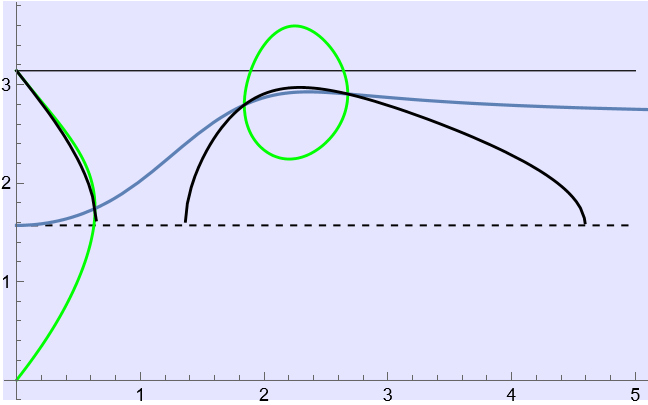}};
\end{tikzpicture}
\caption{The possibilities for the second connected component $\Gamma^2$ as $h$ increases. Left: it disappears ($\lambda>1$); center: it becomes smaller but does not disappear ($\lambda=1$); right: it always exist with a width bounded from below ($\lambda<1$).}
\label{fig:comportamientoGammahelicoidales2}
\end{figure}

\subsection{Proof of Thm. \ref{t3}}\label{s52}

The proof is immediate from the properties deduced in Section \ref{s41} about the phase plane and the properties of the curve $\Gamma$ proved in this section. The existence of a second connected component of $\Gamma$, which is a closed curve, only makes that an orbit entering its inner region changes the monotonicity of its $\theta$-coordinate, but the orbit eventually escapes from this region and then behaves as in Section \ref{s42}. The existence of the orbits $\gamma_\pm$ and their behaviors are proved the same as for the case $h=0$, thus details are skipped; see Fig. \ref{fig:phaseplanehelicoidal}, top, the orbits in blue and red, respectively. 

\begin{figure}[h]
\centering
\begin{tikzpicture}[scale=1]
\node[anchor=south,inner sep=0] at (0,.5){\includegraphics[width=.5\textwidth]{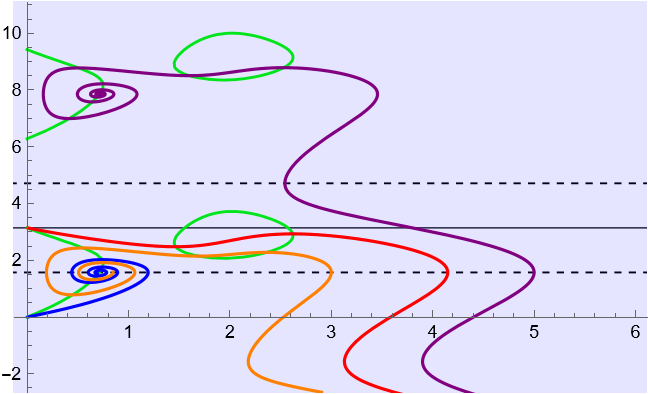}};
\node[anchor=north east,inner sep=0] at (-1.5,0){\includegraphics[width=.4\textwidth]{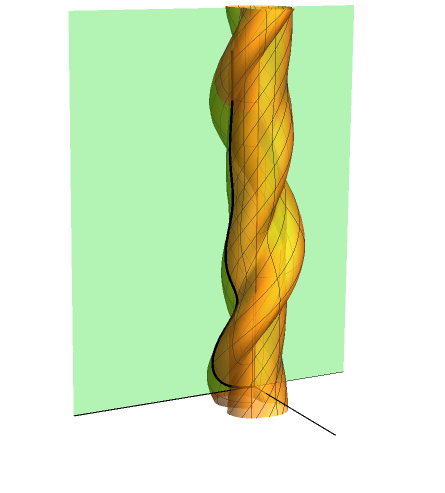}};
\node[anchor=north,inner sep=0] at (0,0){\includegraphics[width=.3\textwidth]{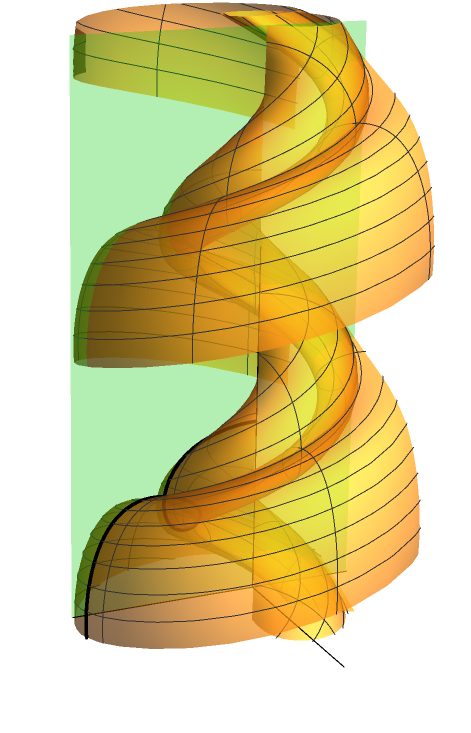}};
\node[anchor=north west,inner sep=0] at (2,0){\includegraphics[width=.35\textwidth]{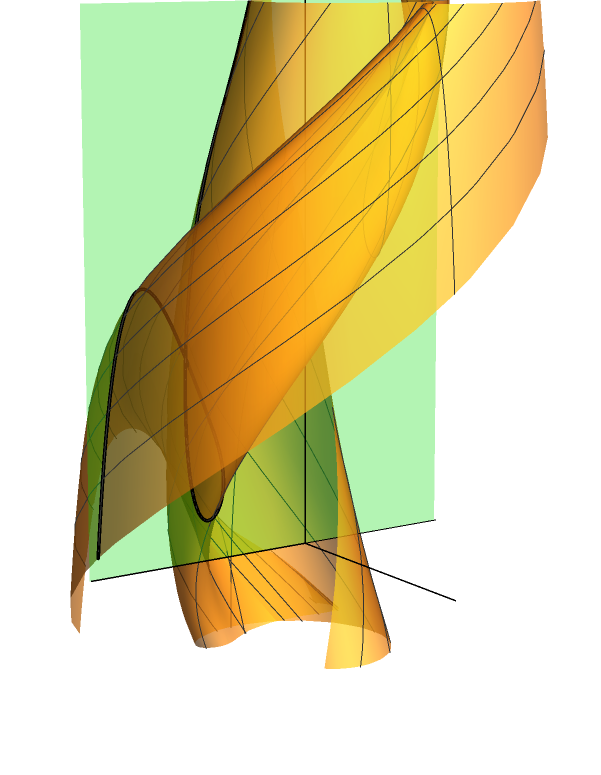}};
\end{tikzpicture}
\caption{The phase plane for helicoidal $\lambda$-translators. Here, $\lambda<1$.}
\label{fig:phaseplanehelicoidal}
\end{figure}


Let $(x_+,\pi/2)$ and $(x_-,\pi/2)$ with $x_+<x_-$ the points through which pass the orbits $\gamma_+$ and $\gamma_-$, respectively. To determine the rest of the orbits, fix some $x_0>0$ and let $\gamma(s)$ be the orbit passing through $(x_0,\pi/2)$ at $s=0$. If $x_0<x_-$, then $\gamma(s)\to e_0$ as $s\to\infty$, and when $s\to-\infty$, then $\gamma$ lies at the left-hand side of $\gamma_-$ and its $x$-coordinate diverges. See Fig. \ref{fig:phaseplanehelicoidal}, top, the orbit in orange. If $x_0>x_-$, then if $s\to-\infty$ then $\gamma$ lies at the right-hand side of $\gamma_-$ and its $x$-coordinate diverges as well, while if $s\to\infty$ then the $x$-coordinate of $\gamma$ decreases. Furthemore, let $(x_n,(2n-1)\pi)$ the intersection of $\gamma$ with $\theta=(2n-1)\pi$ and consider the translation $\gamma_n=\gamma-(0,2(n-1)\pi)$ restricted to $\theta\in[0,\pi]$; observe that $\gamma_1=\gamma$ in this region. A gluing argument as in Thm. \ref{t1} ensures that some $\gamma_{n_0}$ lies at the left-hand side of $\gamma_-$ and thus $\gamma_{n_0}(s)\to e_0$ as $s\to\infty$. Consequently, $\gamma(s)$ ends up converging to the equilibrium $e_0+(0,2(n_0-1)\pi)$ as $s\to\infty$; see Fig. \ref{fig:phaseplanehelicoidal}, top, the orbit in purple. Similarly as for rotational $\lambda$-translators not-intersecting the rotation axis and by $2\pi$-periodicity, the fact that $x_0<x_-$ or not is irrelevant as any orbit will eventually converge to some equilibrium as $s\to\infty$. 

In Fig. \ref{fig:phaseplanehelicoidal}, bottom, we see three helicoidal $\lambda$-translators. The first and the second correspond to the orbits with endpoints $(0,0)$ and $(0,\pi)$ plotted in blue and red, respectively. The third corresponds to an orbit converging to $e_0$ and not reaching the $\theta$-axis, hence the profile curve does not intersect the rotation axis.


\subsection*{Ethics declarations}

Conflict of interest. The author have no conflict of interest to declare that are relevant to the content of this article. No data were used to support this study

\subsection*{Acknowledgment}

Partially supported by the grant PID2021-124157NB-I00, funded by MCIN/ AEI/10.13039/501100011033/ "ERDF A way of making Europe".

\noindent
Universidad Murcia. \\ 
\emph{E-mail address:} jabueno@um.es

\end{document}

\end{enumerate}